    \newtheorem{Lem}{Lemma}[section]
    \newtheorem{Lem-Def}{Lemma-Definition}[section]
    \newtheorem{Prop}[Lem]{Proposition}
    \newtheorem{Thm}[Lem]{Theorem}
    \newtheorem{Cor}[Lem]{Corollary}
		\newtheorem*{Thm*}{Theorem}
\theoremstyle{definition}
    \newtheorem{Def}[Lem]{Definition}
    \newtheorem{Exa}[Lem]{Example}
    \newtheorem{Rem}[Lem]{Remark}
		\newtheorem{Question}[Lem]{Question}
\let\csname ver@amsthm.sty\endcsname\relax
\let\theoremstyle\relax
\theoremstyle{plain}
\theoremstyle{definition}
\numberwithin{figure}{section}
\numberwithin{equation}{section}
\def\@myMR[#1 #2]{\relax\ifhmode\unskip\spacefactor3000 \space\fi
  \MRhref{#1}{MR\,#1}}
\renewcommand\MR[1]{\@myMR[#1 ]}
\renewcommand{\MRhref}[2]{{\tiny%
  \href{http://www.ams.org/mathscinet-getitem?mr=#1}{#2}}%
}
\renewcommand*{\backref}[1]{}
\renewcommand*{\backrefalt}[4]{%
    \tiny%
    ({
    \ifcase #1 not cited%
          \or cit.\ on p.~#2%
          \else cit.\ on pp.~#2%
    \fi%
    })\\[-.6em]}
\def\maketitle{\par
  \@topnum\z@ 
  \@setcopyright
  \thispagestyle{empty}
  \ifx\@empty\shortauthors \let\shortauthors\shorttitle
  \else \andify\shortauthors
  \fi
  \@maketitle@hook
  \begingroup
  \@maketitle
  \toks@\@xp{\shortauthors}\@temptokena\@xp{\shorttitle}%
  \toks4{\def\\{ \ignorespaces}}
  \edef\@tempa{%
    \@nx\markboth{\the\toks4
      \@nx\MakeUppercase{\the\toks@}}{\the\@temptokena}}%
  \@tempa
  \endgroup
  \c@footnote\z@
    \renewcommand{\footnoterule}{%
      \kern -3pt
      \hrule width \textwidth height .5pt
      \kern 2pt
    }
  {
    \renewcommand\thefootnote{}
    \vspace{-2em}
    \footnote{
      \par\vspace{-1.2em}\noindent
      \def\@footnotetext##1{\noindent{\footnotesize##1}\par}%
      \let\@makefnmark\relax  \let\@thefnmark\relax
      \ifx\@empty\@date\else \@footnotetext{\@setdate}\fi
      \ifx\@empty\@subjclass\else \@footnotetext{\@setsubjclass}\fi
      \ifx\@empty\@keywords\else \@footnotetext{\@setkeywords}\fi
      \ifx\@empty\thankses\else \@footnotetext{%
        \def\par{\let\par\@par}\@setthanks}%
      \fi
    }
    \addtocounter{footnote}{-1}
  }
  \@cleartopmattertags
}
\def\@adminfootnotes{\@empty}
\def\@settitle{\begin{center}%
  \baselineskip14\p@\relax
    \bfseries
\Large
  \@title
  \end{center}%
}
\def\@setauthors{%
  \begingroup
  \def\thanks{\protect\thanks@warning}%
  \trivlist
  \centering\footnotesize \@topsep30\p@\relax
  \advance\@topsep by -\baselineskip
  \item\relax
  \author@andify\authors
  \def\\{\protect\linebreak}%
  \large{\authors}%
  \ifx\@empty\contribs
  \else
    ,\penalty-3 \space \@setcontribs
    \@closetoccontribs
  \fi
  \endtrivlist
  \endgroup
}
\def\@setaddresses{\par
  \nobreak \begingroup
\footnotesize
  \def\author##1{\end{minipage}\hskip 1sp \begin{minipage}{.5\textwidth}\raggedright%
    ~\\[2em]{\bf##1}\\[.5em]%
  }%
  \interlinepenalty\@M
  \def\address##1##2{\begingroup
    {\ignorespaces##2}\endgroup\\[.5em]}%
  \def\curraddr##1##2{\begingroup
    \@ifnotempty{##2}{\nobreak\indent\curraddrname
      \@ifnotempty{##1}{, \ignorespaces##1\unskip}\/:\space
      ##2\par}\endgroup}%
  \def\email##1##2{\begingroup
    \@ifnotempty{##2}{\nobreak\indent
      \@ifnotempty{##1}{, \ignorespaces##1\unskip}
      \ttfamily##2\par}\endgroup}%
  \def\urladdr##1##2{\begingroup
    \def~{\char`\~}%
    \@ifnotempty{##2}{\nobreak\indent\urladdrname
      \@ifnotempty{##1}{, \ignorespaces##1\unskip}\/:\space
      \ttfamily##2\par}\endgroup}%
  \setlength{\parindent}{0pt}%
  \vfill%
  {
  \begin{minipage}{0mm}
  \addresses
  \end{minipage}
  }
  \endgroup
}
\renewcommand{\author}[2][]{%
  \ifx\@empty\authors
    \gdef\authors{#2}%
    \g@addto@macro\addresses{\author{#2}}%
  \else
    \g@addto@macro\authors{\and#2}%
    \g@addto@macro\addresses{\author{#2}}%
  \fi
  \@ifnotempty{#1}{%
    \ifx\@empty\shortauthors
      \gdef\shortauthors{#1}%
    \else
      \g@addto@macro\shortauthors{\and#1}%
    \fi
  }%
}
\edef\author{\@nx\@dblarg
  \@xp\@nx\csname\string\author\endcsname}
\def\@secnumfont{\@empty}
\def\section{\@startsection{section}{1}%
  \z@{.7\linespacing\@plus\linespacing}{.5\linespacing}%
  {\large\bfseries\centering}}
\newcommand{\A}{\mathcal A}
\newcommand{\col}{\colon}
\newcommand{\D}{\mathcal{D}}
\DeclareMathOperator{\roott}{root}
\DeclareMathOperator{\inv}{inv}
\DeclareMathOperator{\wt}{wt}
\DeclareMathOperator{\csf}{csf}
\DeclareMathOperator{\LLT}{LLT}
\DeclareMathOperator{\asc}{asc}
\DeclareMathOperator{\lrv}{lrv}
\newcommand{\m}{\mathbf{m}}
\newcommand{\F}{\mathcal{F}}
\title{A symmetric function of increasing forests}
\author{Alex Abreu}
\address{
    Instituto de Matemática e Estatística\\
    Universidade Federal Fluminense\\
    Rua Prof. M. W. de Freitas, S/N\\
    24210-201 Niterói, Rio de Janeiro, Brasil
}
\email{alexbra1@gmail.com}
\author{Antonio Nigro}
\address{
    Instituto de Matemática e Estatística\\
    Universidade Federal Fluminense\\
    Rua Prof. M. W. de Freitas, S/N\\
    24210-201 Niterói, Rio de Janeiro, Brasil
}
\email{antonio.nigro@gmail.com}
\newcommand{\dyckpath}[2]{
\draw[line width=2pt] (#1) foreach \dir in {#2}{ -- ++(\dir*90:1)};
}
\newcommand{\g}[3]{
\begin{tikzpicture}
\begin{scope}[decoration={markings,mark=at position 0.6 with {\arrow[scale=0.5]{>}}}, scale=0.3, thick ]
\ifnum#1=-1 \draw[postaction={decorate}] (0,1) -- (0,0); \fi
\ifnum#1=1 \draw[postaction={decorate}] (0,0) -- (0,1); \fi
\ifnum#1=0 \draw (0,1) -- (0,0); \fi
\ifnum#2=1 \draw[postaction={decorate}] (0,0) -- (1,0); \fi
\ifnum#2=-1 \draw[postaction={decorate}] (1,0) -- (0,0); \fi
\ifnum#2=0 \draw (1,0) -- (0,0); \fi
\ifnum#3=1 \draw[postaction={decorate}] (1,0) -- (0,1); \fi
\ifnum#3=-1 \draw[postaction={decorate}] (0,1) -- (1,0); \fi
\ifnum#3=0 \draw (1,0) -- (0,1); \fi
\draw (0,1) circle (0.1cm) [fill=black];
\draw (0,0) circle (0.1cm) [fill=black];
\draw (1,0) circle (0.1cm) [fill=black];
\end{scope}
\end{tikzpicture}
}
\newcommand{\G}[3]{
\begin{tikzpicture}
\begin{scope}[decoration={markings,mark=at position 0.6 with {\arrow[scale=0.7]{>}}}, scale=0.6, thick ]
\ifnum#1=-1 \draw[postaction={decorate}] (0,1) -- (0,0); \fi
\ifnum#1=1 \draw[postaction={decorate}] (0,0) -- (0,1); \fi
\ifnum#1=0 \draw (0,1) -- (0,0); \fi
\ifnum#2=1 \draw[postaction={decorate}] (0,0) -- (1,0); \fi
\ifnum#2=-1 \draw[postaction={decorate}] (1,0) -- (0,0); \fi
\ifnum#2=0 \draw (1,0) -- (0,0); \fi
\ifnum#3=1 \draw[postaction={decorate}] (1,0) -- (0,1); \fi
\ifnum#3=-1 \draw[postaction={decorate}] (0,1) -- (1,0); \fi
\ifnum#3=0 \draw (1,0) -- (0,1); \fi
\draw (0,1) circle (0.1cm) [fill=black];
\draw (0,0) circle (0.1cm) [fill=black];
\draw (1,0) circle (0.1cm) [fill=black];
\end{scope}
\end{tikzpicture}
}
\newcommand{\ga}[3]{
\begin{tikzpicture}
\begin{scope}[decoration={markings,mark=at position 0.6 with {\arrow[scale=0.5]{>}}}, scale=0.3, thick ]
\ifnum#3=-1 \draw[postaction={decorate}] (0,1) -- (0,0); \fi
\ifnum#3=1 \draw[postaction={decorate}] (0,0) -- (0,1); \fi
\ifnum#3=0 \draw (0,1) -- (0,0); \fi
\ifnum#2=1 \draw[postaction={decorate}] (0,1) -- (1,1); \fi
\ifnum#2=-1 \draw[postaction={decorate}] (1,1) -- (0,1); \fi
\ifnum#2=0 \draw (1,1) -- (0,1); \fi
\ifnum#1=1 \draw[postaction={decorate}] (0,0) -- (1,1); \fi
\ifnum#1=-1 \draw[postaction={decorate}] (1,1) -- (0,0); \fi
\ifnum#1=0 \draw (1,1) -- (0,0); \fi
\draw (0,1) circle (0.1cm) [fill=black];
\draw (0,0) circle (0.1cm) [fill=black];
\draw (1,1) circle (0.1cm) [fill=black];
\end{scope}
\end{tikzpicture}
}
\newcommand{\Ga}[3]{
\begin{tikzpicture}
\begin{scope}[decoration={markings,mark=at position 0.6 with {\arrow[scale=0.7]{>}}}, scale=0.6, thick ]
\ifnum#3=-1 \draw[postaction={decorate}] (0,1) -- (0,0); \fi
\ifnum#3=1 \draw[postaction={decorate}] (0,0) -- (0,1); \fi
\ifnum#3=0 \draw (0,1) -- (0,0); \fi
\ifnum#2=1 \draw[postaction={decorate}] (0,1) -- (1,1); \fi
\ifnum#2=-1 \draw[postaction={decorate}] (1,1) -- (0,1); \fi
\ifnum#2=0 \draw (1,1) -- (0,1); \fi
\ifnum#1=1 \draw[postaction={decorate}] (0,0) -- (1,1); \fi
\ifnum#1=-1 \draw[postaction={decorate}] (1,1) -- (0,0); \fi
\ifnum#1=0 \draw (1,1) -- (0,0); \fi\draw (0,1) circle (0.1cm) [fill=black];
\draw (0,0) circle (0.1cm) [fill=black];
\draw (1,0) circle (0.1cm) [fill=black];
\draw (1,1) circle (0.1cm) [fill=black];
\end{scope}
\end{tikzpicture}
}
\begin{document}
\maketitle

\begin{abstract}
   For an indifference graph $G$ we define a symmetric function of increasing spanning forests of $G$. We prove that this symmetric function satisfies certain linear relations, which are also satisfied by the chromatic quasisymmetric function and unicellular $\LLT$ polynomials. As a consequence we give a combinatorial interpretation of the coefficients of the $\LLT$ polynomial in the elementary basis (up to a factor of a power of $(q-1)$), strengthening the description given in \cite{AS2020}.
\end{abstract}

\tableofcontents
\section{Introduction}

      In 1730 Stirling introduced what are now called (signless) Stirling numbers of the first kind. These numbers $s(n,k)$ may be defined via the following equality
      \begin{equation}
      \label{eq:stir}
      x(x-1)\ldots (x-n+1)=\sum (-1)^{n-k}s(n,k)x^k.
      \end{equation}
      One way to interpret this elementary identity is to notice that the left-hand side is the chromatic polynomial of the complete graph $K_n$ on $n$ vertices, so the numbers $s(n,k)$ are the coefficients of the chromatic polynomial in the basis $x^k$. A combinatorial interpretation for the $s(n,k)$ is the number of increasing forests with $k$ components and vertex set $[n]=\{1,\ldots,n\}$ (see \cite{Bergerontree}).  An \emph{increasing tree} $T$ is a tree with totally ordered vertex set such that the vertices along any path starting on the minimal vertex go in increasing order (see Figure \ref{fig:inctree}).  An \emph{increasing forest} is a forest with totally ordered vertex set such that its components are increasing trees.
      \begin{figure}[htb]
          \centering
          \begin{tikzpicture}
          \begin{scope}
          \node (1) at (0,0) [label=below:$1$, shape=circle, fill=black, inner sep=2pt] {};
           \node (2) at (1,0.5) [label=below:$2$, shape=circle, fill=black,inner sep=2pt] {};
           \node (3) at (1,-0.5) [label=below:$3$, shape=circle, fill=black,inner sep=2pt] {};
           \node (4) at (2,0.5) [label=below:$4$, shape=circle, fill=black,inner sep=2pt] {};
           \node (5) at (3,0) [label=below:$5$, shape=circle, fill=black,inner sep=2pt] {};
           \node (6) at (3,1) [label=below:$6$, shape=circle, fill=black,inner sep=2pt] {};
           \node (7) at (2,-0.5) [label=below:$7$, shape=circle, fill=black,inner sep=2pt] {};
           \draw (1) -- (2) -- (4)-- (5);
           \draw (4) -- (6);
           \draw (1) -- (3) -- (7);
           \end{scope}
           \begin{scope}[shift={(5,0)}]
           \node (1) at (0,0) [label=below:$1$, shape=circle, fill=black, inner sep=2pt] {};
           \node (2) at (1,0.5) [label=below:$2$, shape=circle, fill=black,inner sep=2pt] {};
           \node (3) at (1,-0.5) [label=below:$3$, shape=circle, fill=black,inner sep=2pt] {};
           \node (4) at (2,0.5) [label=below:$7$, shape=circle, fill=black,inner sep=2pt] {};
           \node (5) at (3,0) [label=below:$5$, shape=circle, fill=black,inner sep=2pt] {};
           \node (6) at (3,1) [label=below:$6$, shape=circle, fill=black,inner sep=2pt] {};
           \node (7) at (2,-0.5) [label=below:$4$, shape=circle, fill=black,inner sep=2pt] {};
           \draw[color=red] (1) -- (2) -- (4)-- (5);
           \draw (4) -- (6);
           \draw (1) -- (3) -- (7);
           \end{scope}
          \end{tikzpicture}
          \caption{An increasing tree (left) and a non-increasing tree (right) with a non-increasing path highlighted.}
          \label{fig:inctree}
      \end{figure}
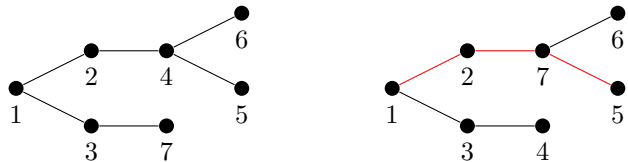

        With these definitions, we see that the coefficient of $x^k$ in the chromatic polynomial of $K_n$ counts (up to a sign) the number of increasing spanning forests of $K_n$ with $k$ components. \par

       This interpretation actually holds for a larger class of graphs, namely, graphs that have a perfect elimination ordering. A \emph{perfect elimination ordering} is an ordering $v_1,v_2,\ldots, v_n$ of the vertex set of $G$ such that, for each vertex $v_j$, the set $\{v_i; i<j, \{v_i,v_j\}\in E(G)\}$ induces a clique of $G$. In this article, we will assume that $G$ has vertex set $[n]$ and that the usual ordering of $[n]$ is a perfect elimination ordering. Then  \cite[Theorem 26]{HallamSagan} states that the coefficient of $x^k$ in $\chi_G(x)$ counts (up to a sign $(-1)^k$) the number of increasing spanning forests of $G$ with exactly $k$ components.\par
      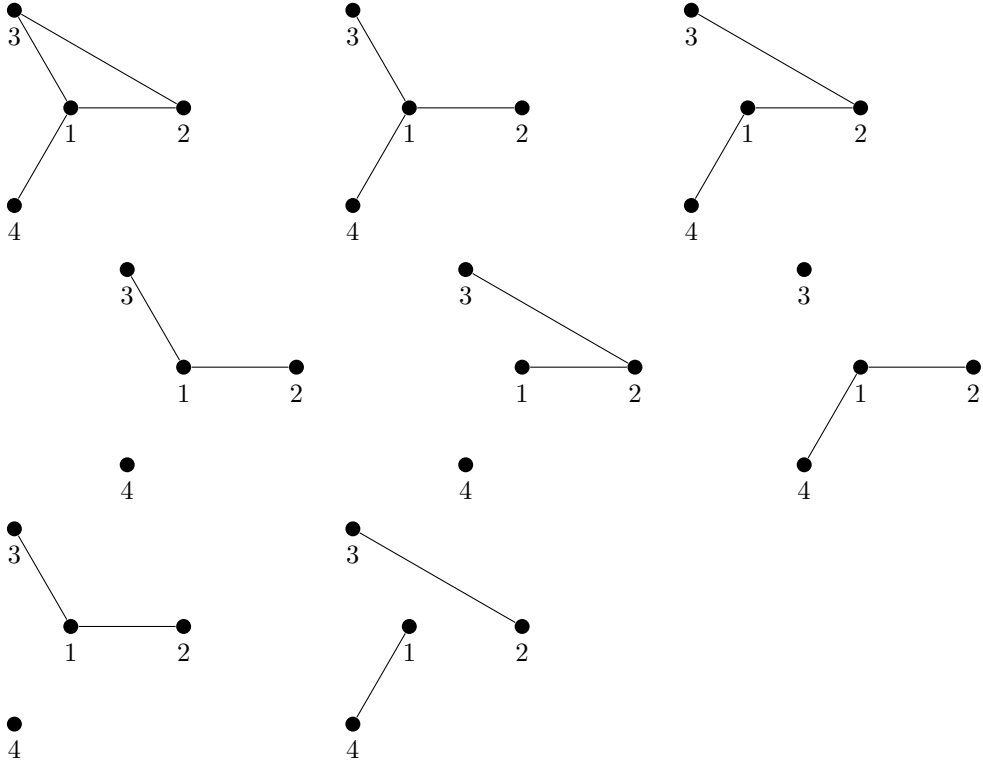
\begin{figure}[htb]
          \centering
          \begin{tikzpicture}[scale=1.5]
         \begin{scope}
           \node (1) at (0,0) [label=below:$1$, shape=circle, fill=black, inner sep=2pt] {};
           \node (2) at (1,0) [label=below:$2$, shape=circle, fill=black,inner sep=2pt] {};
           \node (3) at (-0.5,0.866) [label=below:$3$, shape=circle, fill=black,inner sep=2pt] {};
           \node (4) at (-0.5,-0.866) [label=below:$4$, shape=circle, fill=black,inner sep=2pt] {};
           \draw (1) -- (2);
           \draw (1) -- (3);
           \draw (1) -- (4);
           \draw (2) -- (3);
          \end{scope}
          \begin{scope}[shift={(3,0)}]
           \node (1) at (0,0) [label=below:$1$, shape=circle, fill=black, inner sep=2pt] {};
           \node (2) at (1,0) [label=below:$2$, shape=circle, fill=black,inner sep=2pt] {};
           \node (3) at (-0.5,0.866) [label=below:$3$, shape=circle, fill=black,inner sep=2pt] {};
           \node (4) at (-0.5,-0.866) [label=below:$4$, shape=circle, fill=black,inner sep=2pt] {};
           \draw (1) -- (2);
           \draw (1) -- (3);
           \draw (1) -- (4);
          \end{scope}
          \begin{scope}[shift={(6,0)}]
           \node (1) at (0,0) [label=below:$1$, shape=circle, fill=black, inner sep=2pt] {};
           \node (2) at (1,0) [label=below:$2$, shape=circle, fill=black,inner sep=2pt] {};
           \node (3) at (-0.5,0.866) [label=below:$3$, shape=circle, fill=black,inner sep=2pt] {};
           \node (4) at (-0.5,-0.866) [label=below:$4$, shape=circle, fill=black,inner sep=2pt] {};
           \draw (1) -- (2);
           \draw (2) -- (3);
           \draw (1) -- (4);
          \end{scope}
          \begin{scope}[shift={(1,-2.3)}]
           \node (1) at (0,0) [label=below:$1$, shape=circle, fill=black, inner sep=2pt] {};
           \node (2) at (1,0) [label=below:$2$, shape=circle, fill=black,inner sep=2pt] {};
           \node (3) at (-0.5,0.866) [label=below:$3$, shape=circle, fill=black,inner sep=2pt] {};
           \node (4) at (-0.5,-0.866) [label=below:$4$, shape=circle, fill=black,inner sep=2pt] {};
           \draw (1) -- (2);
           \draw (1) -- (3);
            \end{scope}
          \begin{scope}[shift={(4,-2.3)}]
           \node (1) at (0,0) [label=below:$1$, shape=circle, fill=black, inner sep=2pt] {};
           \node (2) at (1,0) [label=below:$2$, shape=circle, fill=black,inner sep=2pt] {};
           \node (3) at (-0.5,0.866) [label=below:$3$, shape=circle, fill=black,inner sep=2pt] {};
           \node (4) at (-0.5,-0.866) [label=below:$4$, shape=circle, fill=black,inner sep=2pt] {};
           \draw (1) -- (2);
           \draw (2) -- (3);
           \end{scope}
          \begin{scope}[shift={(7,-2.3)}]
           \node (1) at (0,0) [label=below:$1$, shape=circle, fill=black, inner sep=2pt] {};
           \node (2) at (1,0) [label=below:$2$, shape=circle, fill=black,inner sep=2pt] {};
           \node (3) at (-0.5,0.866) [label=below:$3$, shape=circle, fill=black,inner sep=2pt] {};
           \node (4) at (-0.5,-0.866) [label=below:$4$, shape=circle, fill=black,inner sep=2pt] {};
           \draw (1) -- (2);
           \draw (1) -- (4);
          \end{scope}
          \begin{scope}[shift={(0,-4.6)}]
           \node (1) at (0,0) [label=below:$1$, shape=circle, fill=black, inner sep=2pt] {};
           \node (2) at (1,0) [label=below:$2$, shape=circle, fill=black,inner sep=2pt] {};
           \node (3) at (-0.5,0.866) [label=below:$3$, shape=circle, fill=black,inner sep=2pt] {};
           \node (4) at (-0.5,-0.866) [label=below:$4$, shape=circle, fill=black,inner sep=2pt] {};
           \draw (1) -- (2);
           \draw (1) -- (3);
            \end{scope}
          \begin{scope}[shift={(3,-4.6)}]
           \node (1) at (0,0) [label=below:$1$, shape=circle, fill=black, inner sep=2pt] {};
           \node (2) at (1,0) [label=below:$2$, shape=circle, fill=black,inner sep=2pt] {};
           \node (3) at (-0.5,0.866) [label=below:$3$, shape=circle, fill=black,inner sep=2pt] {};
           \node (4) at (-0.5,-0.866) [label=below:$4$, shape=circle, fill=black,inner sep=2pt] {};
           \draw (2) -- (3);
           \draw (1) -- (4);
           \end{scope}
          \end{tikzpicture}
          \caption{A graph $G$ with a perfect elimination ordering and chromatic polynomial $x^4-4x^3+5x^2-2x$, its $2$ increasing spanning forests with $1$ component and its $5$ increasing spanning forests with $2$ components.}
          \label{fig:exampletree}
      \end{figure}

     The chromatic polynomial of a graph admits a symmetric function generalization introduced by Stanley in \cite{Stan95}. Given a graph $G$, it is defined as
     \[
     \csf(G):=\sum_{\kappa}x_{\kappa},
     \]
     where the sum runs through all proper colorings of the vertices  $\kappa\col V(G)\to \mathbb{N}$ and $x_{\kappa}:=\prod_{v\in V(G)} x_{\kappa(v)}$. A coloring $\kappa$ is proper if $\kappa(v)\neq\kappa(v')$ whenever $v$ and $v'$ are adjacent. We have that $\csf(G)$ lies in $\Lambda$, the algebra of symmetric functions.\par
     The algebra $\Lambda$ has three important sets of generators, called the elementary, power sum, and complete symmetric functions:
     \begin{align*}
         e_n=\sum_{i_1<i_2<\ldots <i_n}x_{i_1}x_{i_2}\ldots x_{i_n},\quad\quad         p_n=\sum_i x_i^n,\quad\quad         h_n=\sum_{i_1\leq i_2\leq\ldots \leq i_n}x_{i_1}x_{i_2}\ldots x_{i_n}.
    \end{align*}
    Every element in $\Lambda$ can be written as a  polynomial in one of the generators above. Moreover, for every partition $\lambda=(\lambda_1,\ldots, \lambda_{\ell(\lambda)})$ we define $e_{\lambda}:=\prod e_{\lambda_i}$, $p_{\lambda}:=\prod p_{\lambda_i}$ and $h_{\lambda}:=\prod h_{\lambda_i}$. Each of the sets $\{e_{\lambda}\}$, $\{p_{\lambda}\}$ and $\{h_{\lambda}\}$ is a homogeneous basis for $\Lambda$.\par

          If we set $x_1=x_2=\ldots=x_m=1$ and $x_{m+1}=x_{m+2}=\ldots=0$, then $\csf(G)(1,\ldots, 1,0,\ldots)$ is precisely the number of ways to color $G$ with $m$ colors, which is simply $\chi_G(n)$. Moreover, via this substitution we have that $e_n={m\choose n}$, $h_n={m+n-1\choose n}$ and $p_n=m$. We can actually make $m$ into an indeterminate $x$. If we define the \emph{specialization map} $\epsilon\col\Lambda\to \mathbb{Q}[x]$ as the homomorphism of algebras given by $\epsilon(p_n)=x$ for every $n$, then we have that $\epsilon(e_n)=x(x-1)\ldots(x-n+1)/n!$, $\epsilon(h_n)=x(x+1)\ldots(x+n-1)/n!$, and, as expected, $\epsilon(\csf(G))=\chi_G$. We refer to \cite[Example 1, page 26]{Macdonald} for more details about the specialization map.\par
       Via the specialization map, we have $\epsilon(p_{\lambda})=x^{\ell(\lambda)}$ for every partition $\lambda$. So, the analogue of writing $\chi_G$ in the basis $x^k$ is to express $\csf(G)$ in the basis $p_{\lambda}$ with $\lambda\vdash n$. For instance, if $G$ is the graph in Figure \eqref{fig:exampletree}, we have that
       \[
       \csf(G)=p_{1, 1, 1, 1} - 4p_{2, 1, 1} + p_{2, 2} + 4p_{3, 1} - 2p_{4}.
       \]
       The attentive reader may notice that the five increasing spanning forests of $G$ with two components are nicely divided into two groups: There are four forests with components of either $3$ vertices or $1$ vertex, and one forest with components having $2$ vertices. With that in mind, we define the partition $\lambda(F)$ of a forest $F$ as the partition induced by the number of vertices of the components of $F$.\par
       Going back to the complete graph, the chromatic symmetric function is $n!e_n$. In that case, we have the well-known Newton identity
      \begin{equation}
      \label{eq:enp}
         n!e_n=\sum_{\lambda\vdash n} (-1)^{n-\ell(\lambda)}a_\lambda p_\lambda
      \end{equation}
      where $a_\lambda$ is the number of permutations in $n$ elements with cycle partition $\lambda$. The number $a_{\lambda}$ also counts increasing forests with vertex set $[n]$ and partition $\lambda$.  Indeed, we can construct a bijection between permutations with size $n$ and increasing forests with vertex set $[n]$ as follows (see \cite[Example 1.3.15]{Stanleyenu1}). For each cycle $\tau=(j_1,j_2,\ldots, j_k)$, with $j_1\leq j_l$ for every $l\in[k]$, we construct an increasing tree $T_{\tau}$ with vertex set $\{j_1,\ldots, j_k\}$ where $j_i$ is connected to the rightmost element $j_l$ of $\tau$ which precedes $j_i$ and which is less than $j_i$. If $\sigma$ is a permutation in $S_n$, write $\sigma$ in cyclic notation $\sigma=\tau_1\ldots \tau_k$ and define the forest $F_{\sigma}$ as $T_{\tau_1}\cup T_{\tau_2}\cup\ldots\cup T_{\tau_k}$. It follows from the construction that if $\sigma$ has cycle partition $\lambda$ then $\lambda(F_{\sigma})=\lambda$ as well. See Figure \eqref{fig:permtree} for an example.\par
       
       \begin{figure}[htb]
           \begin{tikzpicture}
              \begin{scope}
        \node (1) at (1,0) [label=below:$1$, shape=circle, fill=black, inner sep=2pt] {};
        \node (2) at (2,0) [label=below:$2$, shape=circle, fill=black,inner sep=2pt] {};
        \node (3) at (3,0) [label=below:$3$, shape=circle, fill=black,inner sep=2pt] {};
        \node (4) at (4,0) [label=below:$4$, shape=circle, fill=black,inner sep=2pt] {};
        \node (5) at (5,0) [label=below:$5$, shape=circle, fill=black,inner sep=2pt] {};
        \node (6) at (6,0) [label=below:$6$, shape=circle, fill=black,inner sep=2pt] {};
        \node (7) at (7,0) [label=below:$7$, shape=circle, fill=black,inner sep=2pt] {};
        \node (8) at (8,0) [label=below:$8$, shape=circle, fill=black, inner sep=2pt] {};
        \node (9) at (9,0) [label=below:$9$, shape=circle, fill=black, inner sep=2pt] {};
        \draw (1) to [bend left=45] (6);
        \draw (2) to [bend left=45] (4);
        \draw (1) to (2);
        \draw[color=red] (3) to [bend left=45] (7);
        \draw[color=red] (3) to [bend left=45] (5);
        \draw[color=red] (7) to [bend left=45] (9);
        \draw (4) to [bend left=45] (8);
        \end{scope}
          \end{tikzpicture}
           \caption{The increasing forest associated to the permutation $\sigma=(16248)(3795)$}
           \label{fig:permtree}
       \end{figure}
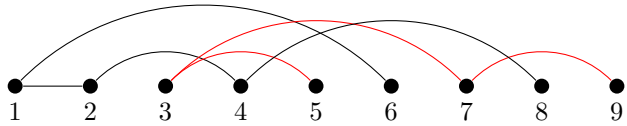

       Actually, if $G$ is a graph with a perfect elimination ordering,
       \begin{equation}
       \label{eq:csfGtree}
       \csf(G)=\sum_{F}(-1)^{n-\ell(\lambda(F))}p_{\lambda(F)}
       \end{equation}
       where the sum runs through all increasing spanning forests of $G$.\footnote{The authors could not find this precise statement in the literature, but it readily follows from \cite[Corollary 2.7]{Stan95} and \cite{HallamSagan}.}\par

      The Stirling numbers of the first kind admit a $q$-deformation $s_q(n,k)$ introduced by Gould \cite{Gould}, defined through the identity
      \[
      x(x-[1]_q)(x-[2]_q)\ldots (x-[n-1]_q)=\sum (-1)^{n-k}s_q(n,k)x^k,
      \]
      where $[j]_q=(q^j-1)/(q-1)$. Since $s(n,k)$ enumerates increasing forests, we could ask what statistic on increasing forests gives $s_q(n,k)$. In another direction, we could ask  what is a $q$-analogue of Equation \eqref{eq:enp}. Of course, we would have first to modify either $e_n$ or $p_\lambda$.\par

      To simplify matters and avoid unnecessary signs, we will apply the usual involution $\omega$ to Equation \eqref{eq:enp}. Recall that $\omega\col \Lambda\to \Lambda$ is defined by $\omega(e_n):=h_n$ and satisfies $\omega(p_n)=(-1)^{n-1}p_n$. Then equation \eqref{eq:enp} becomes an additional Newton identity:
      \begin{equation}
      \label{eq:nhp}
      n!h_n=\sum a_{\lambda}p_{\lambda}.
      \end{equation}
      Another relation involving the power sum and complete symmetric functions is the recursion
      \[
      nh_n=\sum_{i=1}^n h_{n-i}p_i.
      \]
      With this in mind, we define the symmetric functions $\rho_n$ in $\Lambda_q:=\Lambda[q]$ by the recursion
      \[
      [n]_qh_n=\sum_{i=1}^n h_{n-i}\rho_i.
      \]
      For instance, we have that $\rho_1=h_1$, $\rho_2=(q+1)h_2-h_{1,1}$, and $\rho_3=(q^2+q+1)h_3-(q+2)h_{2,1}+h_{1,1,1}$. We note that  $\rho_n$ interpolates between the power sum and the elementary symmetric function, in the sense that $\rho_n(1)=p_n$ and $\rho_n(0)=(-1)^{n-1}e_n$. The reader familiar with symmetric functions will quickly realize that $\rho_n$ is actually a modification of the Hall-Littlewood polynomial $P_n$, namely  $\rho_n=q^{n-1}P_n(q^{-1})$. Defining $\rho_{\lambda}:=\prod \rho_{\lambda_i}$ we get a $q$-analogue of Equation \eqref{eq:nhp},
      \[
      n!_qh_n=\sum a_\lambda(q)\rho_\lambda,
      \]
      where $n!_q:=\prod_{j=1}^n[j]_q$ and $a_{\lambda}(q)$ is a polynomial with non-negative integer coefficients. Moreover, we have that $s_{q}(n,k)=\sum a_{\lambda}(q)$, where the sum runs through all partitions $\lambda$ of $n$ with length $\ell(\lambda)=k$ (see Proposition \ref{prop:rhoeq} and Corollary \ref{cor:stir}). \par


      Since $s(n,k)$ and $a_{\lambda}$ enumerate increasing forests, it is natural to ask if there is a statistic on increasing forests that gives $s_q(n,k)$ and $a_{\lambda}(q)$, and more generally if there is a $q$-analogue of Equation \eqref{eq:csfGtree}.

      One of the main goals of this article is to answer this question. We first notice that the left-hand side of Equation \eqref{eq:csfGtree} already has a $q$-analogue introduced in \cite{ShareshianWachs}, called the \emph{chromatic quasisymmetric function} of $G$.  For a graph $G$ with set of vertices $[n]$, the chromatic quasisymmetric function $\csf_q(G)$ is defined as
    \[
    \csf_q(G):=\sum_{\kappa}q^{\asc_G(k)} x_\kappa.
    \]
    where the sum runs through all proper colorings of $G$  and
    \[
    \asc_G(\kappa):=|\{(i,j);i<j, \kappa(i)<\kappa(j); \{i,j\}\in E(G)\}|
    \]
    is the number of ascents of the coloring $\kappa$.\par
     For the right-hand side of Equation \eqref{eq:csfGtree}, $\rho_\lambda$ will play the part of $p_\lambda$, so it remains to find a statistic $\wt_G(F)$ for every increasing spanning forest $F$ of $G$ such that
     \[
     \csf_q(G)=\sum_{F}q^{\wt_G(F)}\rho_{\lambda(F)}.
     \]
     One complication is that $\csf_q(G)$ is not usually symmetric, in which case there is no chance for the equality above to hold. To avoid this, we will restrict our attention in the rest of this article to a class of graphs for which $\csf_q(G)$ is known to be symmetric, namely indifference graphs.\par

    A graph is called an \emph{indifference graph} if its vertex set is $[n]:=\{1,2,\ldots, n\}$ for some natural $n$ and such that if $\{i,j\}$ is an edge with $i<j$, then $\{i,k\}$ and $\{k,j\}$ are also edges for every $k$ such that $i<k<j$. In particular, on every indifference graph the usual ordering of $[n]$ is a perfect elimination ordering.  \par

     Indifference graphs are naturally associated with Hessenberg functions. A \emph{Hessenberg function} is a non-decreasing function $\m\col [n]\to[n]$ such that $\m(i)\geq i$ for every $i\in [n]$. The graph $G_{\m}$ induced by $\m$ is the graph with vertex set $[n]$ and edge set $\{(i,j); i<j\leq \m(i)\}$. Every indifference graph arises in this way.\par

     \begin{figure}[htb]
       \begin{tikzpicture}
       \begin{scope}[scale=0.6]
       \draw[help lines] (0,0) grid +(3,3);
       \draw[fill=blue, fill opacity=0.2, draw opacity=0] (0,1) rectangle +(1,1);
       \draw[fill=red, fill opacity=0.2, draw opacity=0] (1,2) rectangle +(1,1);
       \dyckpath{0,0}{1,1,0,1,0,0}
       \node at (1.5,-1) {$h=(2,3,3)$};
       \end{scope}
       \begin{scope}[scale=0.6, shift={(4,0)}]
        \node (1) at (1,1.5) [label=below:$1$, shape=circle, fill=black, inner sep=2pt] {};
        \node (2) at (3,1.5) [label=below:$2$, shape=circle, fill=black,inner sep=2pt] {};
        \node (3) at (5,1.5) [label=below:$3$, shape=circle, fill=black,inner sep=2pt] {};
        \node at (3,-1) {$G$};
        \draw[color=blue] (1) to (2);
        \draw[color=red] (2) to (3);
        \end{scope}
        \end{tikzpicture}
        \caption{A Hessenberg function $h$ and its associated indifference graph.}
       \label{fig:fig1}
   \end{figure}
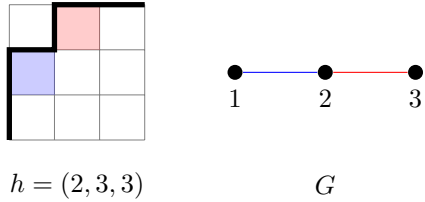

      For a permutation $\sigma\in S_n$ we say that $\sigma\leq \m$ if $\sigma(i)\leq \m(i)$ for every $i\in [n]$ and we denote by $S_{n,\m}$ the set of all permutations $\sigma\in S_n$ such that $\sigma\leq \m$. 
     One easy observation is that $\sigma\leq\m$ if and only if $F_{\sigma}$ is a increasing spanning forest of $G_{\m}$. Equation \eqref{eq:csfGtree} then becomes
     \[
       \omega(\csf(G))=\sum_{\sigma\leq \m} p_{\lambda(\sigma)}.
       \]
       This was known \emph{ante litteram} in the work of Stanley and Stembridge \cite{StanStem}. We mention this as it is easier to define the statistic $\wt_G(F)$ in terms of the associated permutation.\par

       A $G$-inversion of $\sigma$ is a pair $(i,j)$ with $i<j$, $\sigma(i)>\sigma(j)$, and $\{\sigma(j),\sigma(i)\}\in E(G)$. Also, we let $\sigma^c$ be the permutation obtained by removing the parenthesis in the cyclic notation of $\sigma$. Then we define $\wt_G(\sigma)$ to be the number of $G$-inversions of $\sigma^c$. We have the following theorem
      \begin{Thm}
      \label{thm:main1}
      If $G$ is an indifference graph, then
      \begin{equation}
          \label{eq:thm1}
          \omega(\csf_q(G))=\sum_{\sigma\leq \m}q^{\wt_G(\sigma)}\rho_{\lambda(\sigma)}=\sum_{\lambda\vdash n}c_{\lambda}(G)\rho_{\lambda(\sigma)}.
      \end{equation}
      \end{Thm}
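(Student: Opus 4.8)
The plan is to attach to each indifference graph $G=G_{\m}$ the symmetric function
\[
\Psi_q(G):=\sum_{\sigma\le\m}q^{\wt_G(\sigma)}\rho_{\lambda(\sigma)}\in\Lambda_q,
\]
which, through the bijection $\sigma\mapsto F_\sigma$ between $S_{n,\m}$ and the increasing spanning forests of $G$, is precisely the ``symmetric function of increasing forests'' of the title. The second equality in \eqref{eq:thm1} is then only the definition $c_\lambda(G):=\sum_{\sigma\le\m,\ \lambda(\sigma)=\lambda}q^{\wt_G(\sigma)}$, so everything reduces to proving $\omega(\csf_q(G))=\Psi_q(G)$. I would establish this by a uniqueness principle: any assignment $G\mapsto I(G)\in\Lambda_q$ on indifference graphs on $[n]$ that (i) satisfies the $q$-analogue of the Guay--Paquet modular law --- a three-term linear relation expressing (a $q$-multiple of) the invariant at the ``middle'' of a local move $\m_0\rightsquigarrow\m_1\rightsquigarrow\m_2$ in terms of its values at $\m_0$ and $\m_2$ --- and (ii) is multiplicative over connected components with value $m!_q\,h_m$ on a clique $K_m$, is uniquely determined; this is because the modular relations span all linear relations among the $\{\csf_q(G)\}$, the disjoint unions of cliques furnishing a complementary basis indexed by partitions of $n$. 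Since $\omega(\csf_q(-))$ is known to satisfy (i) and (ii), it suffices to verify the same two properties for $\Psi_q$.

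Ingredient (ii) for $\Psi_q$ is bookkeeping. If $G=G_1\sqcup G_2$ is a disjoint union of indifference graphs then $\sigma\le\m$ forces $\sigma$ to preserve the two blocks and restrict to permutations $\sigma_i\le\m_i$, with $\lambda(\sigma)$ the concatenation of $\lambda(\sigma_1)$ and $\lambda(\sigma_2)$ and $\wt_G(\sigma)=\wt_{G_1}(\sigma_1)+\wt_{G_2}(\sigma_2)$, because no $G$-inversion can involve vertices of two components; multiplicativity of $\rho_\lambda$ in $\lambda$ then gives $\Psi_q(G)=\Psi_q(G_1)\,\Psi_q(G_2)$, to be compared with $\omega(\csf_q(G))=\omega(\csf_q(G_1))\,\omega(\csf_q(G_2))$. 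On a clique every pair of vertices is an edge, so $\wt_{K_m}(\sigma)=\inv(\sigma^c)$ and
\[
\Psi_q(K_m)=\sum_{\sigma\in S_m}q^{\inv(\sigma^c)}\rho_{\lambda(\sigma)}=\sum_{\lambda\vdash m}a_\lambda(q)\,\rho_\lambda=m!_q\,h_m
\]
by Proposition \ref{prop:rhoeq}, while $\csf_q(K_m)=[m]_q!\,e_m$ (\cite{ShareshianWachs}) gives $\omega(\csf_q(K_m))=m!_q\,h_m$; so the base values agree.

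The heart of the proof --- and the step I expect to be the genuine obstacle --- is ingredient (i) for $\Psi_q$, i.e.\ proving the modular relations directly on the forest side. A local move toggles the presence of two consecutive ``short'' edges $\{a,b\}$ and $\{b,c\}$ (the edge $\{a,c\}$ being forced by the indifference condition), and one must build, for the triple of Hessenberg functions so related, a correspondence between the sets $S_{n,\m}$ that preserves $\lambda(\sigma)$ and changes $\wt_G(\sigma)$ by exactly the $q$-exponent prescribed by the relation, matching the remaining permutations by an involution. I would phrase this in terms of $F_\sigma$: the move amounts to cutting the edge of $F_\sigma$ entering $b$ from above and regrafting it at $a$ or at $c$, and one checks that the component-size partition is unchanged while counting precisely how many $G$-inversions of $\sigma^c$ are created or destroyed. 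The delicate point is that these exponents must land exactly right --- not merely up to sign as in the $q=1$ case, nor up to an overall power of $q$ --- which is where the normalisation $\rho_n=q^{\,n-1}P_n(q^{-1})$ of the Hall--Littlewood basis is essential. Once (i) and (ii) hold for $\Psi_q$, the uniqueness principle yields $\omega(\csf_q(G))=\Psi_q(G)$ for every indifference graph $G$, which is \eqref{eq:thm1}.
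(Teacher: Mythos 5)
Your overall strategy is exactly the one the paper follows: invoke the uniqueness statement of \cite{AN} (Theorem \ref{thm:csf}: a multiplicative function on Hessenberg functions satisfying the modular law is determined by its values on complete graphs), and then verify that the forest-side sum $\Psi_q$ (the paper's $X_\rho$) is multiplicative, satisfies the modular law, and agrees with $\omega(\csf_q)$ on cliques. The multiplicativity argument you give is the same as Proposition \ref{prop:multiplicative}. But the proposal has a genuine gap at precisely the point you yourself flag as ``the heart of the proof'': you never actually prove that $\Psi_q$ satisfies the modular law. Saying that the local move ``amounts to cutting the edge of $F_\sigma$ entering $b$ from above and regrafting it at $a$ or at $c$'' and that ``one checks'' the inversion count changes correctly is a plan, not a proof; moreover the modular law has two distinct local configurations (Conditions \eqref{item:1} and \eqref{item:2} of Definition \ref{def:modular}), and your description of the move does not match either of them precisely. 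The paper's Proposition \ref{prop:forestmod} carries this out by pairing $\sigma$ with its conjugate $\sigma'=(j,j+1)\sigma(j,j+1)$ (resp.\ $(i,i+1)\sigma(i,i+1)$), observing $\lambda(\sigma')=\lambda(\sigma)$ and $\sigma'^c=(j,j+1)\sigma^c$, splitting into the cases $\sigma\leq\m_0$ and $\sigma\not\leq\m_0$, and tabulating the weight contributions according to the relative order of $i,j,j+1$ (resp.\ $i,i+1,j$) in $\sigma^c$; some form of this case analysis is unavoidable and is missing from your argument.

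A secondary gap is the base case: the identity $\sum_{\sigma\in S_m}q^{\inv(\sigma^c)}\rho_{\lambda(\sigma)}=m!_q\,h_m$ does not follow from Proposition \ref{prop:rhoeq} alone, since that proposition only relates $\rho_n$ and $h_n$ and says nothing about the permutation statistic. In the paper this equality is obtained by first proving the complete-graph recursion $X(K_n)=\sum_i X(K_{n-i})\,y_i\prod_{j=n-i+1}^{n-1}[j]_q$ (Proposition \ref{prop:forestcomp}, which needs the $q$-binomial enumeration of increasing trees by weight and the count $(i-1)!_q$ for increasing trees on $[i]$), or equivalently the closed form of Proposition \ref{prop:kn}, and only then comparing with the determinant/recursion identities of Proposition \ref{prop:rhoeq}. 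As written, your citation asserts the complete-graph case of the theorem rather than deriving it, so both the modular-law step and the clique evaluation still need to be supplied before the uniqueness principle can be applied.
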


      Increasing forests also play a role in another symmetric function related to colorings of $G$. If $G$ is an indifference graph, the unicellular $\LLT$ polynomial associated to $G$ is defined as
     \[
      \LLT(G):=\sum_{\kappa}q^{\asc_G(\kappa)}x_{\kappa}
    \]
    where the sum runs through all (not necessarily proper) colorings of $G$. \footnote{These polynomials can actually be defined in a more general setting, see \cite{LLT}}. \par
     Since $\csf_q$ and the unicellular $\LLT$ polynomial are closely related by a plethystic transformation (see \cite{CarlssonMellit}),  we also get

      \begin{Thm}
      \label{thm:main2}
      If $G$ is an indifference graph, then
      \begin{equation}
          \label{eq:thm2}
          \LLT(G)=\sum_{\sigma\leq \m}(q-1)^{n-\ell(\lambda(\sigma))}q^{\wt_G(\sigma)}e_{\lambda(\sigma)}.
      \end{equation}
            In particular, we have that
      \begin{equation}
          \label{eq:LLTN}
          \LLT(G)=\sum_{\lambda\vdash n}(q-1)^{n-\ell(\lambda)}c_\lambda(G)e_{\lambda},
      \end{equation}
      where $c_{\lambda}(G)\in \mathbb{N}[q]$.
      \end{Thm}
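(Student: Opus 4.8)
The plan is to derive \eqref{eq:thm2} from \eqref{eq:thm1} by feeding it through the plethystic transformation relating the chromatic quasisymmetric function and the unicellular $\LLT$ polynomial. For an indifference graph $G$ with vertex set $[n]$, the relation of \cite{CarlssonMellit} takes the form
\[
\LLT(G)=(q-1)^n\,\csf_q(G)\bigl[\tfrac{X}{q-1}\bigr],
\]
where $[\,\cdot\,]$ denotes plethystic substitution. Applying $\omega$ to the first equality in \eqref{eq:thm1} gives $\csf_q(G)=\sum_{\sigma\leq\m}q^{\wt_G(\sigma)}\,\omega(\rho_{\lambda(\sigma)})$; since plethysm is linear, \eqref{eq:thm2} will follow once I prove the single identity
\begin{equation}
\label{eq:keyrho}
\omega(\rho_\lambda)\bigl[\tfrac{X}{q-1}\bigr]=(q-1)^{-\ell(\lambda)}e_\lambda\qquad\text{for every }\lambda\vdash n .
\end{equation}
Indeed, granting \eqref{eq:keyrho},
\[
\LLT(G)=(q-1)^n\sum_{\sigma\leq\m}q^{\wt_G(\sigma)}\,\omega(\rho_{\lambda(\sigma)})\bigl[\tfrac{X}{q-1}\bigr]=\sum_{\sigma\leq\m}(q-1)^{n-\ell(\lambda(\sigma))}q^{\wt_G(\sigma)}e_{\lambda(\sigma)},
\]
which is \eqref{eq:thm2}; grouping the permutations $\sigma$ by cycle type then gives \eqref{eq:LLTN} with $c_\lambda(G)=\sum_{\sigma\leq\m,\ \lambda(\sigma)=\lambda}q^{\wt_G(\sigma)}$, which is visibly an element of $\mathbb{N}[q]$.

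To prove \eqref{eq:keyrho}, I would first re-express the defining recursion $[n]_qh_n=\sum_{i=1}^{n}h_{n-i}\rho_i$ through generating functions. With $H(z)=\sum_{n\geq0}h_nz^n$ and $R(z)=\sum_{i\geq1}\rho_iz^i$ the recursion reads $\tfrac{1}{q-1}\bigl(H(qz)-H(z)\bigr)=H(z)R(z)$, hence $R(z)=\tfrac{1}{q-1}\bigl(H(qz)/H(z)-1\bigr)$. Because $H(qz)/H(z)=\prod_i\tfrac{1-x_iz}{1-qx_iz}=\sum_{n\geq0}h_n[X(q-1)]z^n$, this yields $\rho_n=\tfrac{1}{q-1}h_n[X(q-1)]$ for $n\geq1$, and therefore $\rho_\lambda=(q-1)^{-\ell(\lambda)}h_\lambda[X(q-1)]$ by multiplicativity of both $h_\lambda$ and plethysm. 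Expanding $h_n[X(q-1)]=\sum_{j=0}^{n}(-1)^{n-j}q^je_{n-j}h_j$ and applying $\omega$ termwise, one checks that $\omega\bigl(h_n[X(q-1)]\bigr)=e_n[X(q-1)]$, so $\omega(\rho_\lambda)=(q-1)^{-\ell(\lambda)}e_\lambda[X(q-1)]$; finally, performing the substitution $X\mapsto X/(q-1)$ turns the alphabet $X(q-1)$ into $X$, which establishes \eqref{eq:keyrho}.

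The step I expect to be the main obstacle is settling the precise form and normalization of the plethystic relation between $\csf_q(G)$ and $\LLT(G)$: one must check that, for the normalization of $\LLT(G)$ adopted here, the Carlsson--Mellit identity really is $\LLT(G)=(q-1)^n\csf_q(G)[X/(q-1)]$, so that the exponents $n-\ell(\lambda)$ in \eqref{eq:thm2} come out exactly right --- this is worth verifying against small cases such as $G=K_2$. The plethystic computations in the second step, although requiring some care with conventions (under plethysm the parameter $q$ is raised to powers along with the $x_i$), are otherwise routine; an alternative is to use the identification $\rho_n=q^{n-1}P_n(q^{-1})$ recorded in the introduction together with the standard behaviour of Hall--Littlewood polynomials under $\omega$, but the generating-function route keeps the argument self-contained.
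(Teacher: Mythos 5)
Your argument is correct, but it is not the route the paper takes. The paper proves Theorem \ref{thm:main2} directly by the same machinery used for Theorem \ref{thm:main1}: taking $y_n=(q-1)^{n-1}e_n$, the right-hand side of \eqref{eq:thm2} is $X_y(G)$, which is multiplicative (Proposition \ref{prop:multiplicative}), satisfies the modular law (Proposition \ref{prop:forestmod}), and satisfies the same complete-graph recursion as $\LLT$ (Proposition \ref{prop:forestcomp} versus Theorem \ref{thm:LLT}); uniqueness then gives $\LLT=X_y$, with no plethysm involved. You instead deduce Theorem \ref{thm:main2} from Theorem \ref{thm:main1} via the Carlsson--Mellit relation, and your pivotal identity $\rho_n=\tfrac{1}{q-1}h_n[(q-1)X]$ (hence $\omega(\rho_\lambda)[X/(q-1)]=(q-1)^{-\ell(\lambda)}e_\lambda$) is genuinely correct: the generating-function derivation from $[n]_qh_n=\sum h_{n-i}\rho_i$ is sound, it is consistent with $\rho_n=q^{n-1}P_n(q^{-1})$, and it checks against $\rho_2=(q+1)h_2-h_{1,1}$. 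Your worry about normalization is settled by the paper itself, which records the relation in exactly the form $\LLT(G)[(q-1)X]/(q-1)^n=\csf_q(G)$ in Section \ref{sec:csfllt}; inverting it by the substitution $X\mapsto X/(q-1)$ is formally valid over $\Lambda\otimes\mathbb{Q}(q)$, and the end result \eqref{eq:thm2} lands back in $\Lambda_q$, while \eqref{eq:LLTN} and $c_\lambda(G)\in\mathbb{N}[q]$ follow by grouping permutations by cycle type exactly as in Definition \ref{def:increasingfor}. What each approach buys: the paper's proof is self-contained and symmetric with the proof of Theorem \ref{thm:main1} (only the complete-graph values change), whereas yours is shorter once Theorem \ref{thm:main1} is in hand, makes the plethystic link between the two theorems explicit rather than implicit, and isolates the pleasant formula $\rho_n=h_n[(q-1)X]/(q-1)$, at the cost of leaning on the external Carlsson--Mellit identity and on rational-function coefficients in the intermediate steps. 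In effect you have carried out the derivation that the introduction only sketches with the phrase ``we also get,'' which the authors chose not to use as the formal proof.
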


      As a direct corollary we obtain

      \begin{Cor}
      \label{cor:main}
      When $G$ is an indifference graph, we have that $\LLT(G;q+1)$ is $e$-positive.
      \end{Cor}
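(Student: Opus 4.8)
The plan is to read the result directly off the $e$-expansion of $\LLT(G)$ supplied by Theorem~\ref{thm:main2}. Recall that \eqref{eq:LLTN} gives
\[
\LLT(G)=\sum_{\lambda\vdash n}(q-1)^{n-\ell(\lambda)}c_\lambda(G)\,e_\lambda,\qquad c_\lambda(G)\in\mathbb{N}[q].
\]
First I would perform the substitution $q\mapsto q+1$, under which the factor $(q-1)^{n-\ell(\lambda)}$ becomes $q^{n-\ell(\lambda)}$. This yields
\[
\LLT(G;q+1)=\sum_{\lambda\vdash n}q^{n-\ell(\lambda)}\,c_\lambda(G;q+1)\,e_\lambda .
\]

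Next I would check that the coefficient of each $e_\lambda$ lies in $\mathbb{N}[q]$. The factor $q^{n-\ell(\lambda)}$ is clearly in $\mathbb{N}[q]$, and $c_\lambda(G;q+1)$ is obtained from $c_\lambda(G)\in\mathbb{N}[q]$ by substituting $q+1$ for $q$; since $q+1\in\mathbb{N}[q]$ and $\mathbb{N}[q]$ is closed under addition and multiplication, it follows that $c_\lambda(G;q+1)\in\mathbb{N}[q]$ as well. Hence every coefficient of $\LLT(G;q+1)$ in the basis $\{e_\lambda\}_{\lambda\vdash n}$ is a polynomial in $q$ with non-negative integer coefficients, which is precisely the assertion that $\LLT(G;q+1)$ is $e$-positive.

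There is no real obstacle here: all of the content is already packaged in Theorem~\ref{thm:main2}, and in particular in the non-trivial fact that $c_\lambda(G)\in\mathbb{N}[q]$, which comes from the combinatorial formula \eqref{eq:thm2} writing $c_\lambda(G)=\sum_{\sigma\leq\m,\ \lambda(\sigma)=\lambda}q^{\wt_G(\sigma)}$ as a sum of monomials indexed by increasing spanning forests. Granting that, the corollary is just the observation that the change of variables $q\mapsto q+1$ converts the single ``bad'' factor $(q-1)^{n-\ell(\lambda)}$ into the manifestly positive factor $q^{n-\ell(\lambda)}$, leaving the rest of the expansion positive.
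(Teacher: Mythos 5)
Your argument is correct and is exactly the intended one: Corollary \ref{cor:main} is read off from Equation \eqref{eq:LLTN} by substituting $q\mapsto q+1$, which turns $(q-1)^{n-\ell(\lambda)}$ into $q^{n-\ell(\lambda)}$ while $c_\lambda(G;q+1)$ stays in $\mathbb{N}[q]$; this is precisely the expansion $\LLT(G;q+1)=\sum_{F}q^{n-\ell(\lambda(F))}(q+1)^{\wt_{\m}(F)}e_{\lambda(F)}$ the paper itself uses later in the proof of Corollary \ref{cor:peralex}. No gaps; the approach matches the paper's.
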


    The $e$-positivity of the vertical strip $\LLT$ polynomials, after the shift $q\to q+1$, was first conjectured in \cite{Alexandersson_2020} and \cite{garsia2019epositivity}, following \cite{Bergeron} where several symmetric functions were conjectured to be $e$-positive after this shift. This conjecture has been recently proved in \cite[Corollary 5.7]{dadderio} and \cite[Corollary 2.10]{AS2020}. The latter article also proves the explicit combinatorial description conjectured in \cite{Alexandersson_2020}. This description also follows from Theorem \ref{thm:main2} (see Corollary \ref{cor:peralex}).\par

      We note that Theorem \ref{thm:main2} is a slight strengthening (in the unicellular case) of this conjecture, in the sense that we isolate the explicit contribution of the $(q-1)$ factors. Moreover, with a more careful examination of Theorem \ref{thm:main2} we can actually extend Corollary \ref{cor:main} to vertical strip $\LLT$ polynomials (see Proposition \ref{prop:vertical}).\par

     The main idea to prove these results is to use the fact that $\csf_q$ and $\LLT$ are completely determined by certain linear relations and their values at the complete graphs, as proved in \cite{AN}. Consequently, it is enough to prove that the right-hand sides of Equations \eqref{eq:thm1} and $\eqref{eq:thm2}$ also satisfy these relations.\par

  \section{Preliminaries}

    \subsection{Symmetric functions}
    \label{sec:sym}
    Let $\Lambda$ be the algebra of symmetric functions, and $\Lambda_q:=\Lambda[q]$. As usual, we let $[j]_q=(q^j-1)/(q-1)$ and $j!_q=\prod_{i=1}^j[i]_q$. We denote by $e_\lambda$, $h_\lambda$, $s_\lambda$, and $p_\lambda$ the elementary, complete, Schur, and power sum symmetric functions. Also, we let $P_{\lambda}(q)\in \Lambda_q$ be the Hall-Littlewood polynomial (see \cite[Chapter 3]{Macdonald}). For our purposes, the relevant identity is
   \begin{equation}
       \label{eq:Ps}
     P_n(q)=\sum_{r=0}^{n-1} (-q)^rs_{n-r,1^r}.
   \end{equation}
   (see \cite[page 214]{Macdonald}). Then we define $\rho_n=q^{n-1}P_n(q^{-1})$, or alternatively $\rho_n=(-1)^{n-1}\omega(P_n)$, and $\rho_{\lambda}:=\prod \rho_{\lambda_i}$. Notice that $\rho_{\lambda}$ is, in general, not the same as $q^{|\lambda|-\ell(\lambda)}P_{\lambda}(q^{-1})$. We have that $\rho_\lambda$ interpolates between the power sum, $\rho_\lambda(1)=p_\lambda$, and the elementary symmetric functions, $\rho_\lambda(0)=(-1)^{n-\ell(\lambda)}e_\lambda$. \par

   Before stating a proposition which relates $\rho$ with the complete homogeneous symmetric functions $h_n$, we make a few definitions. A \emph{domino} is a connected horizontal strip, and a \emph{domino tabloid} of shape $\lambda$ and type $\mu$ is a filling of the Young diagram of $\lambda$ with dominoes of length $\mu_i$, where dominoes of the same length are indistinguishable. The $q$-weight of a domino tabloid is the product $\prod [l_i]_q$ where $l_i$ is the length of the leftmost domino in the $i$-th row of the Young diagram of $\lambda$. We denote by  $w_{\lambda\mu}$ the sum of the $q$-weights of all domino tabloids of shape $\lambda$ and type $\mu$.

     \begin{Prop}
     \label{prop:rhoeq}
       We have the following equalities.
       \begin{enumerate}
       \item \[
     [n]_qh_n=\sum_{j=1}^n h_{n-j}\rho_j,
     \]
     \item \[
     n!_qh_n=\det\left(\begin{array}{cccccc}
    \rho_{1}& \rho_{2} &\rho_{3}&\cdots&\rho_{n-1} &\rho_{n}\\
    -1& \rho_{1} &\rho_{2}&\cdots&\rho_{n-2} &\rho_{n-1}\\
    0 &-[2]_q& \rho_{1} &\cdots&\rho_{n-3}&\rho_{n-2}\\
    0 & 0 & -[3]_q &\cdots&\rho_{n-4}&\rho_{n-3}\\
    \vdots&\vdots &\vdots& \ddots & \vdots&\vdots\\
    0& 0 &0& \cdots & -[n-1]_q &\rho_{1}
        \end{array}
    \right)
    \]
        \item \[
    \rho_{n}=\det\left(\begin{array}{cccccc}
    h_{1}& {[2]}_qh_{2} &{[3]}_qh_{3}&\cdots&{[n-1]}_qh_{n-1} &{[n]}_qh_{n}\\
    1& h_{1} & h_{2} &\cdots& h_{n-2} &h_{n-1}\\
    0& 1 & h_{1} &\cdots& h_{n-3} &h_{n-2}\\
    0& 0 & 1 &\cdots& h_{n-4} &h_{n-3}\\
    \vdots&\vdots &\vdots& \ddots\ &\vdots &\vdots\\
    0& 0 & 0 &\cdots& 1 &h_{1}\\
    \end{array} \right)
    \]
    \item    \[
    \rho_n=\sum_{\alpha\models n}(-1)^{\ell(\alpha)-1}[\alpha_1]_qh_{\lambda(\alpha)}
    \]
   \item
   \[
   \rho_\lambda=\sum_{\mu\vdash n}(-1)^{\ell(\lambda)-\ell(\mu)}w_{\lambda\mu}h_{\mu}
   \]

    \end{enumerate}
    \end{Prop}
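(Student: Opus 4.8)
The plan is to reduce everything to a single generating-function identity and then read off (1)--(5). Write $H(u)=\sum_{m\ge0}h_mu^m=\prod_i(1-x_iu)^{-1}$ and $E(u)=\sum_{m\ge0}e_mu^m=\prod_i(1+x_iu)$, so $E(-u)=H(u)^{-1}$. The key step — which also reconciles the two definitions of $\rho_n$, the introduction's recursion and the normalization $\rho_n=q^{n-1}P_n(q^{-1})$ of Section~\ref{sec:sym} — is to establish
\[
\frac{H(qu)}{H(u)}=1+(q-1)\sum_{n\ge1}\rho_nu^n .
\]
I would deduce this from Macdonald's generating function $\sum_{r\ge0}Q_{(r)}(x;t)u^r=\prod_i\frac{1-x_itu}{1-x_iu}$ (see \cite[III.2]{Macdonald}), together with $Q_{(r)}=(1-t)P_{(r)}$ for $r\ge1$ and $Q_{(0)}=1$: setting $t=q^{-1}$ and replacing $u$ by $qu$ turns the right-hand side into $\prod_i\frac{1-x_iu}{1-x_iqu}=H(qu)/H(u)$, while the left-hand side becomes $\sum_{r\ge0}q^rQ_{(r)}(x;q^{-1})u^r$, and from $\rho_r=q^{r-1}P_{(r)}(q^{-1})$ one gets $q^rQ_{(r)}(x;q^{-1})=(q-1)\rho_r$ for $r\ge1$. (The same identity also follows from the Schur expansion \eqref{eq:Ps} after applying $\omega$, but the Hall--Littlewood route is shorter.)

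Part (1) is then coefficient extraction: clearing $H(u)$ gives $(q-1)H(u)\sum_{n\ge1}\rho_nu^n=H(qu)-H(u)=\sum_{m\ge1}(q^m-1)h_mu^m=(q-1)\sum_{m\ge1}[m]_qh_mu^m$, and equating coefficients of $u^n$ yields $\sum_{j=1}^nh_{n-j}\rho_j=[n]_qh_n$. For (2) and (3) I would regard the relations in (1), for $k=1,\dots,n$, as a triangular linear system and apply Cramer's rule. Treating $\rho_1,\dots,\rho_n$ as unknowns, the system $\sum_{j=1}^kh_{k-j}\rho_j=[k]_qh_k$ is unitriangular, and Cramer's rule expresses $\rho_n$ as the determinant in (3) (after the routine reindexing of rows and columns). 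Treating instead $h_1,\dots,h_n$ as unknowns, rewrite (1) as $[k]_qh_k-\sum_{j=1}^{k-1}\rho_{k-j}h_j=\rho_k$; this is triangular with diagonal entries $[1]_q,\dots,[n]_q$, so Cramer's rule gives $h_n$ as $1/n!_q$ times the determinant in (2). Equivalently one can prove (2) and (3) by induction on $n$, expanding the displayed determinants along their first column and recovering the recursion (1); this is the version I would actually write, to avoid tracking signs of permutations.

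For (4), expand $\sum_{n\ge1}\rho_nu^n=\bigl(\sum_{m\ge1}[m]_qh_mu^m\bigr)H(u)^{-1}$ and use $H(u)^{-1}=\sum_{k\ge0}\bigl(\sum_{\gamma\models k}(-1)^{\ell(\gamma)}h_{\lambda(\gamma)}\bigr)u^k$. In the coefficient of $u^n$, a pair given by an integer $m\ge1$ and a composition $\gamma\models n-m$ concatenates to a composition $\alpha=(m,\gamma_1,\gamma_2,\dots)\models n$ with $\alpha_1=m$, $\ell(\alpha)=\ell(\gamma)+1$ and $h_mh_{\lambda(\gamma)}=h_{\lambda(\alpha)}$; as this is a bijection, one obtains $\rho_n=\sum_{\alpha\models n}(-1)^{\ell(\alpha)-1}[\alpha_1]_qh_{\lambda(\alpha)}$. (Alternatively, (4) follows by a one-line induction from (1).) Finally, (5) follows from (4) by multiplying out $\rho_\lambda=\prod_{i=1}^{\ell(\lambda)}\rho_{\lambda_i}$: choosing a composition $\alpha^{(i)}\models\lambda_i$ for each $i$ is precisely the data of a domino tabloid of shape $\lambda$ — fill the $i$-th row of the Young diagram of $\lambda$ with dominoes of lengths $\alpha^{(i)}_1,\alpha^{(i)}_2,\dots$ from left to right — whose type $\mu$ is the multiset of all parts; under this correspondence $\prod_i[\alpha^{(i)}_1]_q$ is the $q$-weight of the tabloid, the total number of dominoes is $\ell(\mu)$ so $\prod_i(-1)^{\ell(\alpha^{(i)})-1}=(-1)^{\ell(\mu)-\ell(\lambda)}$, and $\prod_ih_{\lambda(\alpha^{(i)})}=h_\mu$; summing over tabloids of each fixed type $\mu$ gives $\rho_\lambda=\sum_{\mu\vdash n}(-1)^{\ell(\lambda)-\ell(\mu)}w_{\lambda\mu}h_\mu$.

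The only genuine difficulty I foresee is the first step: making the normalizations line up so that Macdonald's identity collapses to $H(qu)/H(u)=1+(q-1)\sum\rho_nu^n$ (the powers of $q$ and the factor $1-t$ in $Q_{(r)}=(1-t)P_{(r)}$ must cancel just right). Everything after that is routine — (1) is coefficient comparison, (2) and (3) are the standard continuant-type determinantal forms of a triangular recursion, and (4) and (5) are straightforward expansions, the one point needing care in (5) being that a domino tabloid is genuinely the same thing as a tuple of compositions of the rows (the ``indistinguishability'' of equal-length dominoes being exactly what makes this a bijection).
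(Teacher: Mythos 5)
Your proposal is correct, and for the key item (1) it takes a genuinely different route from the paper. The paper proves (1) by a direct Schur-function computation: it expands $\rho_j$ through the hook expansion \eqref{eq:Ps} of the Hall--Littlewood polynomial, uses $h_n=s_n$, and verifies the resulting telescoping with the Pieri rule. You instead derive the generating-function identity $H(qu)/H(u)=1+(q-1)\sum_{n\ge1}\rho_nu^n$ from Macdonald's $\sum_{r\ge0}q_r(x;t)u^r=\prod_i\frac{1-x_itu}{1-x_iu}$ with $q_r=(1-t)P_{(r)}$, specialized at $t=q^{-1}$, $u\mapsto qu$; the normalizations do line up with $\rho_r=q^{r-1}P_{(r)}(q^{-1})$, and (1) is then coefficient extraction. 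Your route is shorter and makes transparent why the recursive definition of $\rho_n$ in the introduction agrees with $q^{n-1}P_n(q^{-1})$, at the cost of quoting the Hall--Littlewood generating function; the paper's Pieri computation stays entirely inside the Schur expansion \eqref{eq:Ps} that it needs anyway. For (2)--(5) you and the paper do essentially the same thing: (2), (3) from the triangular system given by (1), and (4), (5) by the standard argument of \cite[p.~109]{Macdonald}, which you correctly spell out (the expansion of $H(u)^{-1}$ over compositions, and the identification of a tuple of compositions of the rows of $\lambda$ with a domino tabloid, with the leftmost dominoes carrying the $q$-weight and the total number of dominoes giving $\ell(\mu)$).

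One caveat on (3): the ``routine reindexing'' in your Cramer argument is a cyclic permutation of the rows whose sign is $(-1)^{n-1}$, and here (unlike in (2), where the $-[i]_q$ subdiagonal entries absorb exactly this sign) it does not cancel. What Cramer's rule actually yields is that the determinant displayed in item (3) equals $(-1)^{n-1}\rho_n$; for $n=2$ the displayed determinant is $h_{1,1}-[2]_qh_2=-\rho_2$. This is a sign omission in the printed statement itself (compare the classical $p_n$-in-terms-of-$h$ determinant, which carries the same $(-1)^{n-1}$, as opposed to its $e$-analogue in \cite[I.2, Ex.~8]{Macdonald}), not a flaw in your method --- but as written your phrase hides precisely this sign, so carry it through explicitly.
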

    \begin{proof}
    The first equality comes from Equation \eqref{eq:Ps}, the fact that $h_n=s_n$, and Pieri formula,
    \[
    s_{n-j}\cdot s_{j-r,1^r}=\sum_{a=0}^{\min(n-j,j-r-1)} s_{n-r-a,1+a,1^{r-1}}+ \sum_{a=0}^{min(n-j-1,j-r-1)} s_{n-r-a-1,1+a,1^{r}}.
    \]
    Items (2), (3) are a consequence of item (1) and the expansion of the determinant. Items (4) and (5) follow from item (1) just adapting a standard argument (\cite[page 109]{Macdonald}).
    \end{proof}

     \subsection{The modular law}
     \label{sec:csfllt}
      The chromatic symmetric function and the unicellular $\LLT$ polynomial satisfy a set of recurrences which characterizes them.  We denote by $\D$ the set of Hessenberg functions, which we identify with the set of indifference graphs. We recall that $\D$ has a multiplication given by $G_1\cdot G_2=G_1\cup G_2$, where $G_1\cup G_2$ is the ordered union of $G_1$ and $G_2$.
       \begin{Def}
      \label{def:modular}
      We say that a function $f\col \D\to \A$ satisfies the  \emph{modular law} if
      \begin{equation}
      \label{eq:rec}
      (1+q)f(\m_1)=qf(\m_0)+f(\m_2)
      \end{equation}
      whenever one of the following conditions hold
      \begin{enumerate}
          \item\label{item:1} There exists $i\in [n-1]$ such that $\m_1(i-1)<\m_1(i)<\m_1(i+1)$ and $\m_1(\m_1(i))=\m_1(\m_1(i)+1)$ or $\m_1(i)=n$. Moreover, $\m_0$ and $\m_2$ satisfy $\m_k(j):=\m_1(j)$ for every $j\neq i$ and $k=0,2$, while $\m_k(i)=\m_1(i)-1+k$.
          \item\label{item:2} There exists $i\in [n-1]$ such that $\m_1(i+1)=\m_1(i)+1$ and $\m_1^{-1}(i)=\emptyset$. Moreover, $\m_0$ and $\m_2$ satisfy $\m_k(j):=\m_1(j)$ for every $j\neq i,i+1$ and $k=0,2$, while $\m_0(i)=\m_0(i+1)=\m_1(i)$ and $\m_2(i)=\m_2(i+1)=\m_1(i+1)$.
      \end{enumerate}
      \end{Def}
      We have the following Theorems (see \cite[Theorem 1.2]{AN})

      \begin{Thm}
      \label{thm:csf}
      The chromatic quasisymmetric function is the unique multiplicative function $\csf_q\col \D\to \Lambda_q$ satisfying the modular law such that $\csf_q(K_n)=n!_qe_n$.
      \end{Thm}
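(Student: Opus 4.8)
There are two things to prove --- that $\csf_q$ is a multiplicative function on $\D$ satisfying the modular law with $\csf_q(K_n)=n!_qe_n$, and that these three properties determine it --- and I would treat them in turn. Multiplicativity is immediate: in the ordered union $G_1\cup G_2$ no edge joins a vertex of $G_1$ to one of $G_2$, so a proper coloring of $G_1\cup G_2$ is precisely a pair of proper colorings of the two factors and $\asc_{G_1\cup G_2}$ is the sum of the two ascent counts, whence $q^{\asc}x_\kappa$ factors. The value at a complete graph is a short computation: a proper coloring of $K_n$ is an injection $[n]\ra\mathbb N$; recording its image as $c_1<\cdots<c_n$ and the permutation $\pi$ with $\kappa(i)=c_{\pi(i)}$, one has $x_\kappa=x_{c_1}\cdots x_{c_n}$ independently of $\pi$ while $\asc_{K_n}(\kappa)=\binom{n}{2}-\inv(\pi)$, so $\csf_q(K_n)=e_n\sum_{\pi\in S_n}q^{\binom{n}{2}-\inv(\pi)}=n!_qe_n$. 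The substance is the modular law. Given $i$ and the chain $G_{\m_0}\subseteq G_{\m_1}\subseteq G_{\m_2}$ of Definition~\ref{def:modular}, these graphs differ only by at most two extra edges, both incident to one common vertex, and the two vertices these edges reach are themselves adjacent in $G_{\m_0}$ (forced by the Hessenberg inequalities together with the side conditions). I would partition the proper colorings of $G_{\m_0}$ according to whether each extra edge is ``equalized'' (same color on its endpoints), note that the class equalizing both edges is empty --- those two vertices being adjacent in $G_{\m_0}$ --- and compare the contributions of the remaining classes to $\csf_q(\m_0),\csf_q(\m_1),\csf_q(\m_2)$ exactly as in the classical derivation of the modular relation for $\csf$. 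The new ingredient is the exponent of $q$: one pairs colorings by a weight-preserving involution swapping the colors of the two vertices distinguished in \ref{item:1} (resp.\ \ref{item:2}), which are \emph{twins} in $G_{\m_0}$ and in $G_{\m_2}$ --- this is exactly what $\m_1(i-1)<\m_1(i)<\m_1(i+1)$ and $\m_1(\m_1(i))=\m_1(\m_1(i)+1)$ (resp.\ $\m_1^{-1}(i)=\emptyset$) guarantee --- and one checks that across each pair the ascent powers recombine into $(1+q)\csf_q(\m_1)=q\,\csf_q(\m_0)+\csf_q(\m_2)$. The boundary cases (such as $\m_1(i)=n$) are the same argument with one of the two edges absent.

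For uniqueness, let $M_n$ be the free $\mathbb Z[q]$-module on the indifference graphs with vertex set $[n]$ and $R_n\subseteq M_n$ the submodule generated by all instances $(1+q)[\m_1]-q[\m_0]-[\m_2]$ of the modular law. The key lemma is that the classes of the disjoint unions of cliques $K_\lambda:=K_{\lambda_1}\cup\cdots\cup K_{\lambda_{\ell(\lambda)}}$, $\lambda\vdash n$, span $M_n/R_n$ over $\mathbb Z[q]$. I would prove it by an explicit reduction: if $\m$ is not already of the form $K_\lambda$, then some index $i$ satisfies \ref{item:1} or \ref{item:2} (a short combinatorial check: a simultaneous failure of all such configurations forces $\m$ to be a step function with no interior preimages, i.e.\ $\m=K_\lambda$), so the associated modular relation rewrites $[\m]$ in terms of the classes of strictly simpler Hessenberg functions; one organizes this in a well-founded order and checks it never introduces denominators (any surplus that would naively call for dividing by $1+q$ or $q$ is cancelled by another modular relation). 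Granting the lemma, any multiplicative $f\col\D\ra\Lambda_q$ obeying the modular law with $f(K_n)=n!_qe_n$ is forced: $f(\m)=\sum_\lambda c_\lambda(q)\,f(K_\lambda)=\sum_\lambda c_\lambda(q)\prod_i\lambda_i!_q\,e_{\lambda_i}$, with $c_\lambda(q)\in\mathbb Z[q]$ supplied by the lemma, so $f=\csf_q$ by the first half.

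The main obstacle is in these two ``substance'' steps. In the existence half it is the $q$-bookkeeping: checking that the three-way partition of colorings together with the twin-swapping involution yields precisely the coefficients $1+q$, $q$, $1$, and that the side conditions of Definition~\ref{def:modular} are exactly what make the relevant vertices twins in $G_{\m_0}$ and $G_{\m_2}$ (so that the involution preserves properness and, up to the single edge joining the twins, the ascent count). In the uniqueness half it is the spanning lemma: producing a terminating reduction of an arbitrary indifference graph to cliques by modular moves while keeping all coefficients in $\mathbb Z[q]$ rather than in $\mathbb Q(q)$. Everything else --- multiplicativity, the $K_n$ value, and the final assembly --- is routine.
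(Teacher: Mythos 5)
The paper itself offers no proof of Theorem \ref{thm:csf}: it is imported verbatim from \cite[Theorem 1.2]{AN}, so your proposal has to be judged against that reference rather than against an argument in the text. Your existence half is essentially the standard argument and is sound: multiplicativity and the evaluation $\csf_q(K_n)=n!_qe_n$ are as you say, and for the modular law the side conditions of Definition \ref{def:modular} do exactly what you claim --- in case \ref{item:1} the vertices $\m_1(i)$ and $\m_1(i)+1$, and in case \ref{item:2} the vertices $i$ and $i+1$, are adjacent twins in $G_{\m_0}$ and in $G_{\m_2}$, and their neighborhoods in $G_{\m_1}$ differ only in the apex vertex. Splitting the proper colorings of $G_{\m_0}$ according to which (if either) of the two extra edges is monochromatic, and pairing colorings by the twin swap, does recombine pair by pair into $(1+q)\csf_q(\m_1)=q\,\csf_q(\m_0)+\csf_q(\m_2)$; the bookkeeping you defer reduces to a three-case check of an elementary identity in the ascent indicators, and it works out.

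The genuine gap is in the uniqueness half. Your key lemma is false as stated: the clique classes do not span $M_n/R_n$ over $\mathbb Z[q]$, and the denominators cannot be cancelled by other relations. Concretely, for $n=3$ and the path $\m=(2,3,3)$, the only instances of the modular law on $[3]$ are $(1+q)[(2,3,3)]-q[(2,2,3)]-[(3,3,3)]$ and $(1+q)[(2,3,3)]-q[(1,3,3)]-[(3,3,3)]$; hence every element of $R_3$ has $[(2,3,3)]$-coefficient divisible by $1+q$, so $[(2,3,3)]$ is not congruent modulo $R_3$ to any $\mathbb Z[q]$-combination of unions of cliques. (Equivalently: the only slot with coefficient $1$ is $\m_2$, the largest of the three graphs, so an integral reduction can only eliminate the biggest graph, and $(2,3,3)$ never occurs in that slot.) Your reduction is also not obviously well-founded: each instance contains a graph with strictly more edges than $\m_1$, so rewriting $\m$ from the middle slot does not produce ``strictly simpler'' graphs in any naive order. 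None of this threatens the theorem, but the repair is different from what you propose: uniqueness needs no integrality at all, since $\Lambda_q$ is an integral domain and the relation $(1+q)f(\m_1)=qf(\m_0)+f(\m_2)$ determines any one of the three values from the other two (equivalently, prove your spanning lemma over $\mathbb Q(q)$). What then remains --- and is the actual content of \cite[Theorem 1.2]{AN} --- is the combinatorial induction: induct on $n$, dispose of disconnected graphs by multiplicativity, and for each connected, non-complete $\m$ exhibit an instance of the modular law in which the two other values are already known (for $(2,3,3)$: $\m_0=(2,2,3)$ is disconnected and $\m_2=K_3$); organizing such instances for all $\m$, with a genuinely well-founded ordering, is substantially more than the ``short combinatorial check'' your sketch allots to it.
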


      \begin{Thm}
      \label{thm:LLT}
      The unicellular $\LLT$ polynomial is the unique multiplicative function $\LLT\col \D\to \Lambda_q$ satisfying the modular law such that
      \[
      \LLT(K_n)=\sum_{i=1}^n \LLT(K_{n-i})((q-1)^{i-1}e_i)\prod_{j=n-i+1}^{n-1}[j]_q.
      \]
      \end{Thm}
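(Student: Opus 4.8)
The plan is to reduce the statement to Theorem~\ref{thm:csf} by means of the plethystic relation between $\csf_q$ and the unicellular $\LLT$ polynomial (\cite{CarlssonMellit}). That relation furnishes a graded $\mathbb{Q}(q)$-algebra automorphism $\Phi$ of $\Lambda\otimes\mathbb{Q}(q)$, independent of the graph, with $\csf_q(G)=\Phi(\LLT(G))$ for every indifference graph $G$; in the normalization that makes this hold exactly one has $\Phi(e_i)=(-(q-1))^{1-i}P_i(q)$, equivalently $\Phi\bigl((q-1)^{i-1}e_i\bigr)=(-1)^{i-1}P_i(q)=\omega(\rho_i)$. Invertibility of $\Phi$ is clear from Proposition~\ref{prop:rhoeq}(4), which exhibits the $\rho_i$ (hence the $\omega(\rho_i)$, hence the $\Phi(e_i)$) as a triangular change of polynomial generators of $\Lambda\otimes\mathbb{Q}(q)$ with respect to the $h_i$. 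Since $\Phi$ is a $\mathbb{Q}(q)$-linear algebra map, it transports the three defining features — multiplicativity, each instance of the modular law, and the value at $K_n$ — from the $\csf_q$ side to the $\LLT$ side and back.

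For \emph{existence} I would check three things. First, $\LLT$ is multiplicative: a coloring of the ordered union $G_1\cup G_2$ is the same as a pair of colorings of $G_1$ and $G_2$, the ascent number splits because there are no edges between the two blocks, and the monomial factors accordingly (this also follows from $\LLT=\Phi^{-1}\circ\csf_q$ and multiplicativity of $\csf_q$). Second, $\LLT$ satisfies the modular law: $\csf_q$ does by Theorem~\ref{thm:csf}, and since the three graphs of any modular triple $(\m_0,\m_1,\m_2)$ have the same vertex set $[n]$, applying $\Phi^{-1}$ to $(1+q)\csf_q(\m_1)=q\csf_q(\m_0)+\csf_q(\m_2)$ yields the corresponding relation for $\LLT$. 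Third, $\LLT(K_n)$ obeys the stated recursion: write $R_n$ for its right-hand side; using $\csf_q(K_m)=m!_q e_m$, the identity $\Phi\bigl((q-1)^{i-1}e_i\bigr)=\omega(\rho_i)$, and $\prod_{j=n-i+1}^{n-1}[j]_q=(n-1)!_q/(n-i)!_q$, one computes
\[
\omega\bigl(\Phi(R_n)\bigr)=\sum_{i=1}^{n}(n-i)!_q\,h_{n-i}\,\rho_i\cdot\frac{(n-1)!_q}{(n-i)!_q}=(n-1)!_q\sum_{i=1}^{n}h_{n-i}\rho_i=(n-1)!_q\,[n]_q\,h_n=n!_q\,h_n,
\]
where the middle equality is Proposition~\ref{prop:rhoeq}(1); since $\omega$ is an involution this gives $\Phi(R_n)=n!_q e_n=\csf_q(K_n)$, hence $R_n=\Phi^{-1}(\csf_q(K_n))=\LLT(K_n)$.

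For \emph{uniqueness}, let $f\col\D\to\Lambda_q$ be multiplicative, satisfy the modular law, and satisfy the displayed recursion on complete graphs. An induction on $n$ using the previous paragraph shows $f(K_n)=\LLT(K_n)$ for every $n$. Put $g:=\Phi\circ f$. Then $g$ is multiplicative (as $\Phi$ is a graded algebra map and each $f(G)$ is homogeneous), $g$ satisfies the modular law (apply the linear map $\Phi$ to the relations for $f$), and $g(K_n)=\Phi(\LLT(K_n))=\csf_q(K_n)=n!_q e_n$. By Theorem~\ref{thm:csf}, $g=\csf_q$, and therefore $f=\Phi^{-1}\circ g=\Phi^{-1}\circ\csf_q=\LLT$.

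The only delicate point along this route is fixing the normalization of the plethystic map $\Phi$ so that $\csf_q=\Phi\circ\LLT$ is an identity on the nose; once that bookkeeping is settled everything is formal, and the argument is visibly symmetric in $\csf_q$ and $\LLT$. I should add that one can instead argue directly from the combinatorial definition of $\LLT$ — multiplicativity is immediate, and the $K_n$ recursion and the modular law can be obtained by manipulating the defining sums over colorings (the modular law via a weight-preserving bijection exploiting the local configurations of Definition~\ref{def:modular}) — but the uniqueness half of any such argument still comes down to showing that the modular moves reduce an arbitrary Hessenberg function to a disjoint union of cliques. That reduction is the real combinatorial obstacle, and it is the same content that must underlie Theorem~\ref{thm:csf} as well.
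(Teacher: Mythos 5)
Your proposal is correct and follows essentially the same route as the paper, which obtains Theorem \ref{thm:LLT} from Theorem \ref{thm:csf} via the plethystic relation $\csf_q(G)=\LLT(G)[(q-1)X]/(q-1)^n$; the details you supply (the identity $\Phi((q-1)^{i-1}e_i)=\omega(\rho_i)$ and the derivation of the $K_n$ recursion from $\csf_q(K_n)=n!_q e_n$ together with Proposition \ref{prop:rhoeq}(1)) are exactly what the paper delegates to the citations \cite{AN} and \cite[Proposition 5.18]{AlexPanova}. The only point to note is that $\Phi\circ f$ takes values in $\Lambda\otimes\mathbb{Q}(q)$ rather than $\Lambda_q$, so you invoke the uniqueness of Theorem \ref{thm:csf} in that slightly larger codomain; this is harmless, since the characterization from \cite{AN} determines any multiplicative function satisfying the modular law from its values on complete graphs over $\mathbb{Q}(q)$ as well.
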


      The recurrence for the complete graphs in Theorem \ref{thm:LLT} can be found in \cite[Proposition 5.18]{AlexPanova}.  The theorems above are equivalent via the plethystic relation (see \cite{CarlssonMellit})
      \[
      \frac{\LLT(G)[(q-1)X]}{(q-1)^n}=\csf_q(G).
      \]

    \subsection{Permutations and increasing forests}
    We begin by recalling the \emph{forget cycles} transformation $\sigma\to \sigma^c$, which simply consists in  writing $\sigma$ in cycle notation and forgetting the parenthesis. This means that each cycle must begin with its least element, and the cycles must be ordered according to their least element, for example
   \[
   \sigma=617892543, \quad \sigma=(162)(3759)(48), \quad\sigma^c=162375948.
   \]
   Denote by $\lambda(\sigma)$ the cycle partition of $\sigma$.\par

 For a permutation $\sigma \in S_n$ and Hessenberg function $\m\col[n]\to[n]$, we define the number of \emph{$\m$-inversions} of sigma as
 \[
 \inv_{\m}(\sigma):=|\{(i,j); i<j\leq \m(i), \sigma^{-1}(i)>\sigma^{-1}(j)\}|.
 \]
 We define the \emph{$\m$-weight} of a permutation $\sigma$ as $\wt_{\m}(\sigma):=\inv_{\m}( \sigma^c)$. We notice that $\wt_{\m}(\sigma)=\wt_{G_{\m}}(\sigma)$. Recall $\sigma\leq \m$ if $\sigma(i)\leq \m(i)$ for every $i\in[n]$, and $S_{n,\m}$ denotes the set $\{\sigma\in S_n; \sigma\leq \m\}$.\par
  \begin{Rem}
  \label{rem:taudet}
    We note that a $k$-cycle $\tau=(j_1,j_2,\ldots, j_k)$, with least element $j_1$, is in $S_{n,\m}$ if and only if $j_{l+1}\leq \m(j_l)$ for every $j\in [k-1]$. Moreover, a permutation $\sigma$ is in $S_{n,\m}$ if and only if all of its cycles are in $S_{n,\m}$ as well. Moreover, we have that the cycle $\tau\in S_{n,\m}$ on $\{j_1,\ldots, j_k\}$ is determined by the numbers
  \[
  \inv_{\m}(\tau,j_l)=|\{l'\in [k]; l'>l, j_{l'}<j_l\leq  \m(j_l')\}|,
  \]
  for every $l\in[k]$ (see \cite{GarsiaRemmel} or \cite[Section 9.5]{AlexPanova}).

   \end{Rem}


    Now we define the weight of an increasing spanning forest of an indifference graph $G$. Before doing that, we note that given a function $g\col[n] \to [n]\cup \{0\}$ such that $g(j)<j$, we have that the graph $F$ with vertex set $V(F)=[n]$ and edge set $E(F)=\{\{g(j),j\}; g(j)\neq0\}$ is an increasing forest. All increasing forests with vertex set $[n]$ can be obtained in this way. Given a increasing tree $T$, an edge $\{u,v\}$ with $u<v$ has a \emph{length} given by $|\{w\in V(T);u<w<v\}|$, and we define the \textit{weight} $\wt(T)$ of $T$ as the sum of the lengths of each edge. \par 
    Let $F=T_1\cup T_2\cup\ldots \cup T_k$ be an increasing spanning forest of a graph $G$. By convention, we assume that $\roott(T_1)<\roott(T_2)<\ldots<\roott(T_k)$. We say that a set $\{v,w\}$ of vertices is an \emph{inversion} of $F$ if $v<w$ and there exists $i<j$ such that $v\in T_j$ and $w\in T_i$. Moreover,  if $\{v,w\}$ is an edge of $G$, we say that $\{v,w\}$ is a $G$-\emph{inversion} of $F$ and define $\inv_G(F)$ as the number of $G$-inversions of $F$. The $G$-weight of $F$ is given by
\[
 \wt_G(F):=\inv_G(F)+\sum_{1\leq j\leq k}\wt(T_j).
 \]
 When $G$ is the graph induced by a Hessenberg function $\m$, we write $\wt_{\m}(F)$ for $\wt_G(F)$. For a forest $F$, we say that the partition of $F$ is the partition given by $|V(T_i)|$ and denote it by $\lambda(F)$. For a graph $G$, we denote by $\F(G)$ the set of increasing spanning forests of $G$.\par
  For example, considering the increasing forest in Figure \eqref{fig:permtree}, we have that $\wt(T_1)=3$, $\wt(T_2)=1$ and the set of inversions of $F$ is $\{\{3,4\},\{3,6\},\{3,8\},\{5,6\},\{5,8\},\{7,8\}\}$. See Figure \eqref{fig:permtree1}.

\begin{figure}[htb]
           \begin{tikzpicture}
              \begin{scope}
        \node (1) at (1,0) [label=below:$1$, shape=circle, fill=black, inner sep=2pt] {};
        \node (2) at (2,0) [label=below:$2$, shape=circle, fill=black,inner sep=2pt] {};
        \node (3) at (7,0) [label=below:$3$, shape=circle, fill=black,inner sep=2pt] {};
        \node (4) at (3,0) [label=below:$4$, shape=circle, fill=black,inner sep=2pt] {};
        \node (5) at (8,0) [label=below:$5$, shape=circle, fill=black,inner sep=2pt] {};
        \node (6) at (4,0) [label=below:$6$, shape=circle, fill=black,inner sep=2pt] {};
        \node (7) at (9,0) [label=below:$7$, shape=circle, fill=black,inner sep=2pt] {};
        \node (8) at (5,0) [label=below:$8$, shape=circle, fill=black, inner sep=2pt] {};
        \node (9) at (10,0) [label=below:$9$, shape=circle, fill=black, inner sep=2pt] {};
        \draw (1) to [bend left=45] (6);
        \draw (2) to  (4);
        \draw (1) to  (2);
        \draw[color=red] (3) to [bend left=45] (7);
        \draw[color=red] (3) to  (5);
        \draw[color=red] (7) to  (9);
        \draw (4) to [bend left=45] (8);
        \end{scope}
          \end{tikzpicture}
           \caption{Computing the weight and set of inversions of the increasing forest in Figure \eqref{fig:permtree}}
           \label{fig:permtree1}
       \end{figure}
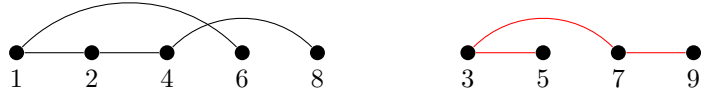

  The bijection between $S_{n,\m}$ and $\F(G_{\m})$ given in the introduction preserves partitions, but not weights. This is solved by the proposition below.

 \begin{Prop}
 Given a Hessenberg function $\m\col[n]\to[n]$, there exists a bijecion $S_{n,\m}\to \F(G_{\m})$ that preserves both weight and partition.  \par
 \end{Prop}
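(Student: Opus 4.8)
The plan is to show that the bijection $\sigma\mapsto F_\sigma$ from the introduction can be corrected ``support by support'', and that on each piece the correction is governed by a common Lehmer-type code.

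\textbf{Set-up.} First I would note that $\sigma\mapsto F_\sigma$ already preserves the partition of $[n]$ into the supports of the cycles (which are the vertex sets of the trees of $F_\sigma$), so it is enough to modify, for each subset $S=\{s_1<s_2<\dots<s_k\}\subseteq[n]$, the part of the bijection going from cyclic permutations on $S$ lying below $\m$ to increasing trees on $S$ with all edges in $G_\m$, keeping the support unchanged; any such modification again preserves partitions. I would then check that it suffices to make this part compatible with the ``internal'' weights. Concretely, writing $\sigma\in S_{n,\m}$ with cycles on supports $S_1,\dots,S_m$ (ordered by least element) and $\sigma^c$ accordingly as a concatenation of blocks, $\inv_\m(\sigma^c)$ equals the sum over $a$ of the number of $\m$-inversions occurring inside the $a$-th block, plus the number of pairs $(i,j)$ with $i<j\le\m(i)$ lying in distinct supports with $\min(\text{support of }i)>\min(\text{support of }j)$. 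The second quantity depends only on the partition $\{S_a\}$, and a short bookkeeping identifies it with $\inv_{G_\m}(F)$, since the trees of $F$ are ordered the same way. As $\wt_\m(\sigma)$ is the left-hand side and $\wt_\m(F)=\inv_{G_\m}(F)+\sum_a\wt(T_a)$, it is enough to arrange that the number of $\m$-inversions inside the $a$-th block equals $\wt(T_a)$.

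\textbf{The code.} Fix $S$. Because $\m$ is non-decreasing, the neighbors of $s_i$ in $G_\m$ that are smaller than $s_i$ form an interval of integers, so inside $S$ they are exactly $s_{i-d_i},\dots,s_{i-1}$ for some $d_i\ge0$. On the tree side, an increasing tree on $S$ with edges in $G_\m$ is just a choice, for each $i\ge2$, of a parent $p(s_i)$ among these $d_i$ vertices; writing $p(s_i)=s_{i-1-c_i}$ with $c_i\in\{0,\dots,d_i-1\}$ makes the edge $\{p(s_i),s_i\}$ have length $c_i$, so such trees biject with tuples $(c_2,\dots,c_k)\in\prod_{i=2}^k\{0,\dots,d_i-1\}$ and $\wt(T)=\sum c_i$. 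On the cycle side, I would build $\tau$ by starting from the word $s_1$ and inserting $s_2,\dots,s_k$ in this order, each placed immediately after a letter already present. Inserting $s_i$ keeps the word below $\m$ exactly when $s_i$ is placed just after one of its smaller neighbors, so there are $d_i$ legal slots; and since $s_i$ is then the largest letter present, the $\m$-inversions it creates are precisely the smaller neighbors of $s_i$ that land to its right, a number that takes each value in $\{0,\dots,d_i-1\}$ for exactly one of the $d_i$ slots and is never changed by the later insertions, which add only larger letters. Choosing at each step the slot that creates $c_i$ new inversions therefore gives a bijection between cyclic permutations on $S$ below $\m$ and the same tuples $(c_2,\dots,c_k)$, under which the total number of internal $\m$-inversions is $\sum c_i$. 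Composing the two codes gives the required support- and weight-compatible modification, and the set-up step then completes the proof.

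\textbf{Where the work is.} The only nontrivial point is the weight bookkeeping in the insertion step: verifying that the number of new $\m$-inversions produced by inserting $s_i$ runs exactly over $\{0,1,\dots,d_i-1\}$, independently of how $\{s_1,\dots,s_{i-1}\}$ is currently arranged, and is unaffected by the subsequent insertions. This is precisely where the indifference hypothesis is used, through the fact that the smaller neighbors of each vertex form a contiguous block; the reduction to a single cycle and the tree-side count are routine.
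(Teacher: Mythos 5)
Your argument is correct, and the map you end up with is in fact the same one the paper constructs (the parent of each vertex is chosen so that the edge length equals the number of $\m$-inversions attributed to that vertex), but the verification runs along a genuinely different track. The paper defines the parent function directly by the equation matching edge length with the inversion count, checks that the resulting tree lies in $G_{\m}$ by a containment-plus-cardinality contradiction, and then gets bijectivity by combining Remark \ref{rem:taudet} (a cycle in $S_{n,\m}$ is determined by its inversion counts, cited from Garsia--Remmel/Alexandersson--Panova) with the count $|S_{n,\m}|=\prod_i(\m(i)-i+1)$; weight preservation across components is left as ``by construction.'' You instead work support by support and exhibit a common Lehmer-type code on both sides: the contiguity of the smaller $G_{\m}$-neighborhood of each $s_i$ (this is where monotonicity of $\m$ enters) identifies increasing trees on $S$ inside $G_{\m}$ with tuples $(c_2,\dots,c_k)\in\prod\{0,\dots,d_i-1\}$, and your insertion argument identifies cycles on $S$ below $\m$ with the same tuples, in effect reproving the cited remark and the product formula rather than invoking them; you also spell out the cross-block bookkeeping (cross-cycle $\m$-inversions of $\sigma^c$ equal $\inv_{G_\m}(F)$ because both depend only on the supports ordered by minima), which the paper leaves implicit. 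What each approach buys: the paper's proof is shorter given the external inputs, while yours is self-contained and makes surjectivity and the weight match transparent. One point you should state explicitly is the converse half of ``keeps the word below $\m$ exactly when'': given a cycle below $\m$, the nearest smaller letter $u$ to the left of $s_i$ is forced to satisfy $\m(u)\ge s_i$, because the letters between $u$ and $s_i$ are larger than $s_i$ and the consecutive condition applied at $u$ gives $\m(u)\ge$ the letter immediately following $u$; with that sentence added, the proof is complete.
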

  \begin{proof}
    For each cycle $\tau=(j_1,\ldots, j_k)$ in $S_{n,\m}$, the idea is to construct a tree $T_{\tau}$ with vertex set $\{j_1,\ldots, j_k\}$ such that $\wt(T)=\inv_{\m}(\tau^c)$. We already know it is sufficient to define a function $g\col \{j_2,\ldots, j_k\}\to\{j_1,\ldots, j_k\}$ satisfying $g(j_l)<j_l$ for every $l\in\{2,\ldots, k\}$. In order to have $\wt(T_{\tau})=\inv_{\m}(\tau^c)$, we define $g$ as the unique function satisfying
  \[
  |\{l'\in [k], g(j_l)<j_{l'}<j_l\}|=|\{l'\in [k]; l'>l, j_{l'}<j_l\leq  \m(j_l')\}|
  \]
   for every $l\in\{2,\ldots, k\}$.\par
      To see that $T_{\tau}$ is indeed a subgraph of $G_{\m}$ it is sufficient to check that $\m(g(j_l))\geq j_l$ for every $l\in\{2,\ldots, k\}$. Assume for contradiction that there exists $l\in\{2,\ldots,k\}$ such that $\m(g(j_l))< j_l$, so in particular
  \[
  \{l'\in [k], g(j_l)<j_{l'}<j_l\}\supseteq\{l'\in [k]; j_{l'}<j_l\leq \m(j_{l'})\}\supseteq \{l'\in [k]; l'>l, j_{l'}<j_l\leq  \m(j_l')\}.
  \]
  Since $\{l'\in [k], g(j_l)<j_{l'}<j_l\}$ and $\{l'\in [k]; l'>l, j_{l'}<j_l\leq  \m(j_l')\}$ have the same cardinality, we must have equalities in the equation above.  However,  since $\tau\in S_{n,\m}$, we have that there must exist $l'<l$ such that $j_{l'}<j_l\leq \m(j_{l'})$, which means the last inclusion is proper, a contradiction.\par
   Let $\phi\col S_{n,\m}\to \F(G_{\m})$ be the function defined by $\phi(\sigma)=T_{\tau_1}\cup T_{\tau_2}\cup\ldots\cup T_{\tau_k}$ where $\tau_1,\ldots, \tau_k$ are the cycles of $\sigma$. The function $\phi$ is injective by Remark \ref{rem:taudet}, hence bijective because both sets have the same cardinality $\prod_{i=1}^n (\m(i)-i+1$), and preserves weight and partition by construction.
\end{proof}

 We denote the increasing forest associated to $\sigma$ via this construction by $F_{\sigma,\m}$. For example, if $\m=(3,3,5,5,5)$ and $\sigma=(13542)$, then $F_{\sigma,\m}$ is depicted in Figure \eqref{fig:Fsigma}.
  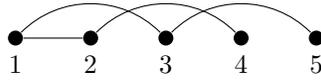
\begin{figure}[htb]
        \begin{tikzpicture}
        \begin{scope}
        \node (1) at (1,0) [label=below:$1$, shape=circle, fill=black, inner sep=2pt] {};
        \node (2) at (2,0) [label=below:$2$, shape=circle, fill=black,inner sep=2pt] {};
        \node (3) at (3,0) [label=below:$3$, shape=circle, fill=black,inner sep=2pt] {};
        \node (4) at (4,0) [label=below:$4$, shape=circle, fill=black,inner sep=2pt] {};
        \node (5) at (5,0) [label=below:$5$, shape=circle, fill=black,inner sep=2pt] {};
        \draw (3) to [bend left=45] (5);
        \draw (2) to [bend left=45] (4);
        \draw (1) to [bend left=45] (3);
        \draw (1) to  (2);
        \end{scope}
        \end{tikzpicture}
        \caption{The increasing tree induced by the cycle $(13542)\in S_{5,\m}$, where $\m=(3,3,5,5,5)$.}
        \label{fig:Fsigma}
     \end{figure}

\section{The symmetric function of increasing forests}
 In this section we define a symmetric function $X\col \D\to \Lambda_q$ that enumerates increasing spanning forests of indifference graphs. First, we choose generators $y_n\in \Lambda_q$ homogeneous of degree $n$ and let $y_{\lambda}=\prod y_{\lambda_i}$. These generators will either be $y_n=\rho_n$ or $y_n=(q-1)^{n-1}e_n$ in the next section.
 \begin{Def}
 \label{def:increasingfor}
   Let $\m$ be a Hessenberg function and $G_{\m}$ its associated graph. We define the following symmetric function
   \[
   X(\m)=X_y(\m):=\sum_{\sigma\leq \m} q^{\wt_{\m}(\sigma)}y_{\lambda(\sigma)}=\sum_{F\in \F(G_{\m})}q^{\wt_{\m}(F)}y_{\lambda(F)}=\sum_{\lambda \vdash n} c_\lambda(\m)y_\lambda
   \]
  \end{Def}
 \begin{Exa}
 As an example, if $\m=(2,3,4,4)$ we have $8$ increasing spanning forests (depicted in Figure \eqref{fig:forestspath})  of $G_{\m}$, all of which have weight $1$. We get
 \[
 X(\m)=y_{1,1,1,1}+3y_{2,1,1}+y_{2,2}+2y_{3,1}+y_4.
 \]
 More generally, if $\m=(2,3,\ldots, n-1,n,n)$ is the Hessenberg function associated to the \emph{path graph}, we have that the weight of any increasing spanning forest of $G_{\m}$ is $1$.
  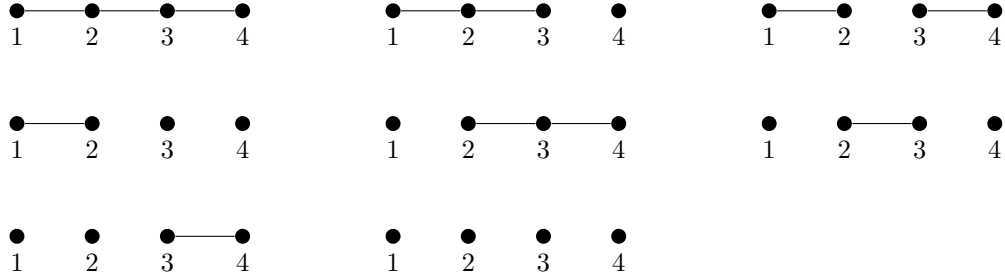
\begin{figure}[htb]
        \begin{tikzpicture}
        \begin{scope}
        \node (1) at (1,0) [label=below:$1$, shape=circle, fill=black, inner sep=2pt] {};
        \node (2) at (2,0) [label=below:$2$, shape=circle, fill=black,inner sep=2pt] {};
        \node (3) at (3,0) [label=below:$3$, shape=circle, fill=black,inner sep=2pt] {};
        \node (4) at (4,0) [label=below:$4$, shape=circle, fill=black,inner sep=2pt] {};
        \draw (1) to (2);
        \draw (2) to (3);
        \draw (3) to (4);
        \end{scope}
        \begin{scope}[shift={(5,0)}]
        \node (1) at (1,0) [label=below:$1$, shape=circle, fill=black, inner sep=2pt] {};
        \node (2) at (2,0) [label=below:$2$, shape=circle, fill=black,inner sep=2pt] {};
        \node (3) at (3,0) [label=below:$3$, shape=circle, fill=black,inner sep=2pt] {};
        \node (4) at (4,0) [label=below:$4$, shape=circle, fill=black,inner sep=2pt] {};
        \draw (1) to (2);
        \draw (2) to (3);
        \end{scope}
        \begin{scope}[shift={(10,0)}]
        \node (1) at (1,0) [label=below:$1$, shape=circle, fill=black, inner sep=2pt] {};
        \node (2) at (2,0) [label=below:$2$, shape=circle, fill=black,inner sep=2pt] {};
        \node (3) at (3,0) [label=below:$3$, shape=circle, fill=black,inner sep=2pt] {};
        \node (4) at (4,0) [label=below:$4$, shape=circle, fill=black,inner sep=2pt] {};
        \draw (1) to (2);
        \draw (3) to (4);
        \end{scope}
        \begin{scope}[shift={(0,-1.5)}]
        \node (1) at (1,0) [label=below:$1$, shape=circle, fill=black, inner sep=2pt] {};
        \node (2) at (2,0) [label=below:$2$, shape=circle, fill=black,inner sep=2pt] {};
        \node (3) at (3,0) [label=below:$3$, shape=circle, fill=black,inner sep=2pt] {};
        \node (4) at (4,0) [label=below:$4$, shape=circle, fill=black,inner sep=2pt] {};
        \draw (1) to (2);
        \end{scope}
        \begin{scope}[shift={(5,-1.5)}]
        \node (1) at (1,0) [label=below:$1$, shape=circle, fill=black, inner sep=2pt] {};
        \node (2) at (2,0) [label=below:$2$, shape=circle, fill=black,inner sep=2pt] {};
        \node (3) at (3,0) [label=below:$3$, shape=circle, fill=black,inner sep=2pt] {};
        \node (4) at (4,0) [label=below:$4$, shape=circle, fill=black,inner sep=2pt] {};
        \draw (2) to (3);
        \draw (3) to (4);
        \end{scope}
        \begin{scope}[shift={(10,-1.5)}]
        \node (1) at (1,0) [label=below:$1$, shape=circle, fill=black, inner sep=2pt] {};
        \node (2) at (2,0) [label=below:$2$, shape=circle, fill=black,inner sep=2pt] {};
        \node (3) at (3,0) [label=below:$3$, shape=circle, fill=black,inner sep=2pt] {};
        \node (4) at (4,0) [label=below:$4$, shape=circle, fill=black,inner sep=2pt] {};
        \draw (2) to (3);
        \end{scope}
        \begin{scope}[shift={(0,-3)}]
        \node (1) at (1,0) [label=below:$1$, shape=circle, fill=black, inner sep=2pt] {};
        \node (2) at (2,0) [label=below:$2$, shape=circle, fill=black,inner sep=2pt] {};
        \node (3) at (3,0) [label=below:$3$, shape=circle, fill=black,inner sep=2pt] {};
        \node (4) at (4,0) [label=below:$4$, shape=circle, fill=black,inner sep=2pt] {};
        \draw (3) to (4);
        \end{scope}
        \begin{scope}[shift={(5,-3)}]
        \node (1) at (1,0) [label=below:$1$, shape=circle, fill=black, inner sep=2pt] {};
        \node (2) at (2,0) [label=below:$2$, shape=circle, fill=black,inner sep=2pt] {};
        \node (3) at (3,0) [label=below:$3$, shape=circle, fill=black,inner sep=2pt] {};
        \node (4) at (4,0) [label=below:$4$, shape=circle, fill=black,inner sep=2pt] {};
        \end{scope}
        \end{tikzpicture}
        \caption{All the increasing spanning forests of $G_{\m}$ for $\m=(2,3,4,4)$.}
        \label{fig:forestspath}
     \end{figure}

 On the other hand, if $\m=(2,4,4,4)$, we have that $G_{\m}$ has twelve increasing spanning forests, four of which have partition $(3,1)$ and are depicted in Figure \eqref{fig:forests1333}. We get
 \[
 X(\m)=y_{1,1,1,1}+(q+3)y_{2,1,1}+y_{2,2}+(2q+2)y_{3,1}+(q+1)y_4
 \]
  \begin{figure}[htb]
        \begin{tikzpicture}
        \begin{scope}
        \node (1) at (1,0) [label=below:$1$, shape=circle, fill=black, inner sep=2pt] {};
        \node (2) at (2,0) [label=below:$2$, shape=circle, fill=black,inner sep=2pt] {};
        \node (3) at (3,0) [label=below:$3$, shape=circle, fill=black,inner sep=2pt] {};
        \node (4) at (4,0) [label=below:$4$, shape=circle, fill=black,inner sep=2pt] {};
        \draw (1) to (2);
        \draw (2) to (3);
        \draw (2) to [bend left=45] (4);
        \draw (3) to (4);
        \end{scope}
        \begin{scope}[shift={(5,0)}]
        \node (1) at (1,0) [label=below:$1$, shape=circle, fill=black, inner sep=2pt] {};
        \node (2) at (2,0) [label=below:$2$, shape=circle, fill=black,inner sep=2pt] {};
        \node (3) at (3,0) [label=below:$3$, shape=circle, fill=black,inner sep=2pt] {};
        \node (4) at (4,0) [label=below:$4$, shape=circle, fill=black,inner sep=2pt] {};
        \draw (1) to (2);
        \draw (2) to (3);
        \end{scope}
        \begin{scope}[shift={(10,0)}]
        \node (1) at (1,0) [label=below:$1$, shape=circle, fill=black, inner sep=2pt] {};
        \node (2) at (2,0) [label=below:$2$, shape=circle, fill=black,inner sep=2pt] {};
        \node (3) at (3,0) [label=below:$3$, shape=circle, fill=black,inner sep=2pt] {};
        \node (4) at (4,0) [label=below:$4$, shape=circle, fill=black,inner sep=2pt] {};
        \draw (1) to (2);
        \draw (2) to [bend left=45] (4);
        \end{scope}
        \begin{scope}[shift={(0,-1.5)}]
        \node (1) at (1,0) [label=below:$1$, shape=circle, fill=black, inner sep=2pt] {};
        \node (2) at (2,0) [label=below:$2$, shape=circle, fill=black,inner sep=2pt] {};
        \node (3) at (3,0) [label=below:$3$, shape=circle, fill=black,inner sep=2pt] {};
        \node (4) at (4,0) [label=below:$4$, shape=circle, fill=black,inner sep=2pt] {};
        \draw (2) to (3);
        \draw (3) to (4);
        \end{scope}
        \begin{scope}[shift={(5,-1.5)}]
        \node (1) at (1,0) [label=below:$1$, shape=circle, fill=black, inner sep=2pt] {};
        \node (2) at (2,0) [label=below:$2$, shape=circle, fill=black,inner sep=2pt] {};
        \node (3) at (3,0) [label=below:$3$, shape=circle, fill=black,inner sep=2pt] {};
        \node (4) at (4,0) [label=below:$4$, shape=circle, fill=black,inner sep=2pt] {};
        \draw (2) to (3);
        \draw (2) to [bend left=45] (4);
        \end{scope}
        \end{tikzpicture}
        \caption{The graph associated to $\m=(2,4,4,4)$ and the increasing spanning forests of $G_{\m}$ with partition $(3,1)$.}
        \label{fig:forests1333}
 \end{figure}
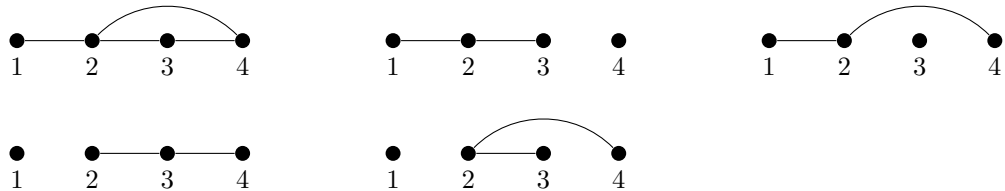

 When $\m=(3,4,4,5,5)$ there are eight increasing spanning forests of $G_{\m}$ with partition $(4,1)$, as depicted in Figure \eqref{fig:forests5}. We have
 \begin{align*}
 X(\m)=&y_{1,1,1,1,1}+(2q+4)y_{2, 1, 1, 1} + (2q+3)y_{2, 2, 1} + (q^2+4q+3)y_{3, 1, 1}\\
       &+ (2q+2)y_{3, 2} + (2q^2+4q+2)y_{4, 1} + (q^2+2q+1)y_5.
 \end{align*}
 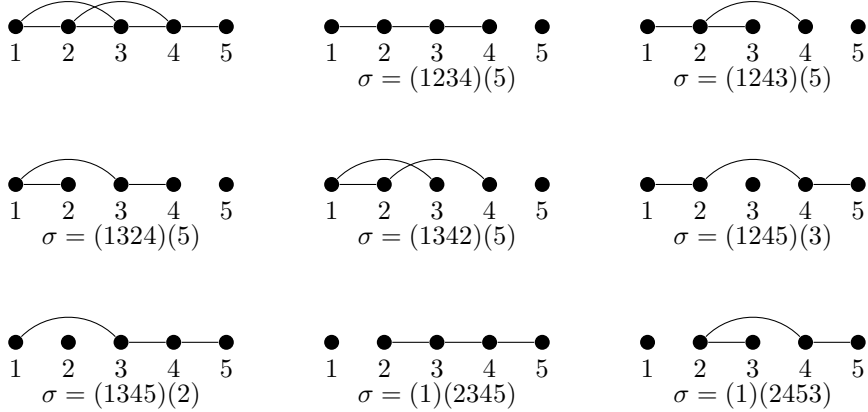
\begin{figure}[htb]
        \begin{tikzpicture}[scale=0.7]
        \begin{scope}
        \node (1) at (0,0) [label=below:$1$, shape=circle, fill=black, inner sep=2pt] {};
        \node (2) at (1,0) [label=below:$2$, shape=circle, fill=black,inner sep=2pt] {};
        \node (3) at (2,0) [label=below:$3$, shape=circle, fill=black,inner sep=2pt] {};
        \node (4) at (3,0) [label=below:$4$, shape=circle, fill=black,inner sep=2pt] {};
        \node (5) at (4,0) [label=below:$5$, shape=circle, fill=black,inner sep=2pt] {};
        \draw (1) to (2);
        \draw (1) to [bend left=45] (3);
        \draw (2) to (3);
        \draw (2) to [bend left=45] (4);
        \draw (3) to (4);
        \draw (4) to (5);
        \end{scope}
        \begin{scope}[shift={(6,0)}]
        \node (1) at (0,0) [label=below:$1$, shape=circle, fill=black, inner sep=2pt] {};
        \node (2) at (1,0) [label=below:$2$, shape=circle, fill=black,inner sep=2pt] {};
        \node (3) at (2,0) [label=below:$3$, shape=circle, fill=black,inner sep=2pt] {};
        \node (4) at (3,0) [label=below:$4$, shape=circle, fill=black,inner sep=2pt] {};
        \node (5) at (4,0) [label=below:$5$, shape=circle, fill=black,inner sep=2pt] {};
        \draw (1) to (2);
        \draw (2) to (3);
        \draw (3) to (4);
        \node at (2,-1) {$\sigma=(1234)(5)$};
        \end{scope}
        \begin{scope}[shift={(12,0)}]
        \node (1) at (0,0) [label=below:$1$, shape=circle, fill=black, inner sep=2pt] {};
        \node (2) at (1,0) [label=below:$2$, shape=circle, fill=black,inner sep=2pt] {};
        \node (3) at (2,0) [label=below:$3$, shape=circle, fill=black,inner sep=2pt] {};
        \node (4) at (3,0) [label=below:$4$, shape=circle, fill=black,inner sep=2pt] {};
        \node (5) at (4,0) [label=below:$5$, shape=circle, fill=black,inner sep=2pt] {};
        \draw (1) to (2);
        \draw (2) to (3);
        \draw (2) to [bend left=45] (4);
        \node at (2,-1) {$\sigma=(1243)(5)$};
        \end{scope}
        \begin{scope}[shift={(0,-3)}]
        \node (1) at (0,0) [label=below:$1$, shape=circle, fill=black, inner sep=2pt] {};
        \node (2) at (1,0) [label=below:$2$, shape=circle, fill=black,inner sep=2pt] {};
        \node (3) at (2,0) [label=below:$3$, shape=circle, fill=black,inner sep=2pt] {};
        \node (4) at (3,0) [label=below:$4$, shape=circle, fill=black,inner sep=2pt] {};
        \node (5) at (4,0) [label=below:$5$, shape=circle, fill=black,inner sep=2pt] {};
        \draw (1) to (2);
        \draw (1) to [bend left=45] (3);
        \draw (3) to (4);
        \node at (2,-1) {$\sigma=(1324)(5)$};
        \end{scope}
        \begin{scope}[shift={(6,-3)}]
        \node (1) at (0,0) [label=below:$1$, shape=circle, fill=black, inner sep=2pt] {};
        \node (2) at (1,0) [label=below:$2$, shape=circle, fill=black,inner sep=2pt] {};
        \node (3) at (2,0) [label=below:$3$, shape=circle, fill=black,inner sep=2pt] {};
        \node (4) at (3,0) [label=below:$4$, shape=circle, fill=black,inner sep=2pt] {};
        \node (5) at (4,0) [label=below:$5$, shape=circle, fill=black,inner sep=2pt] {};
        \draw (1) to (2);
        \draw (1) to [bend left=45] (3);
        \draw (2) to [bend left=45] (4);
        \node at (2,-1) {$\sigma=(1342)(5)$};
        \end{scope}
        \begin{scope}[shift={(12,-3)}]
        \node (1) at (0,0) [label=below:$1$, shape=circle, fill=black, inner sep=2pt] {};
        \node (2) at (1,0) [label=below:$2$, shape=circle, fill=black,inner sep=2pt] {};
        \node (3) at (2,0) [label=below:$3$, shape=circle, fill=black,inner sep=2pt] {};
        \node (4) at (3,0) [label=below:$4$, shape=circle, fill=black,inner sep=2pt] {};
        \node (5) at (4,0) [label=below:$5$, shape=circle, fill=black,inner sep=2pt] {};
        \draw (1) to (2);
        \draw (2) to [bend left=45] (4);
        \draw (4) to (5);
        \node at (2,-1) {$\sigma=(1245)(3)$};
        \end{scope}
        \begin{scope}[shift={(0,-6)}]
        \node (1) at (0,0) [label=below:$1$, shape=circle, fill=black, inner sep=2pt] {};
        \node (2) at (1,0) [label=below:$2$, shape=circle, fill=black,inner sep=2pt] {};
        \node (3) at (2,0) [label=below:$3$, shape=circle, fill=black,inner sep=2pt] {};
        \node (4) at (3,0) [label=below:$4$, shape=circle, fill=black,inner sep=2pt] {};
        \node (5) at (4,0) [label=below:$5$, shape=circle, fill=black,inner sep=2pt] {};
        \draw (1) to [bend left=45] (3);
        \draw (3) to (4);
        \draw (4) to (5);
        \node at (2,-1) {$\sigma=(1345)(2)$};
        \end{scope}
        \begin{scope}[shift={(6,-6)}]
        \node (1) at (0,0) [label=below:$1$, shape=circle, fill=black, inner sep=2pt] {};
        \node (2) at (1,0) [label=below:$2$, shape=circle, fill=black,inner sep=2pt] {};
        \node (3) at (2,0) [label=below:$3$, shape=circle, fill=black,inner sep=2pt] {};
        \node (4) at (3,0) [label=below:$4$, shape=circle, fill=black,inner sep=2pt] {};
        \node (5) at (4,0) [label=below:$5$, shape=circle, fill=black,inner sep=2pt] {};
        \draw (2) to (3);
        \draw (3) to (4);
        \draw (4) to (5);
        \node at (2,-1) {$\sigma=(1)(2345)$};
        \end{scope}
        \begin{scope}[shift={(12,-6)}]
        \node (1) at (0,0) [label=below:$1$, shape=circle, fill=black, inner sep=2pt] {};
        \node (2) at (1,0) [label=below:$2$, shape=circle, fill=black,inner sep=2pt] {};
        \node (3) at (2,0) [label=below:$3$, shape=circle, fill=black,inner sep=2pt] {};
        \node (4) at (3,0) [label=below:$4$, shape=circle, fill=black,inner sep=2pt] {};
        \node (5) at (4,0) [label=below:$5$, shape=circle, fill=black,inner sep=2pt] {};
        \draw (2) to (3);
        \draw (2) to [bend left=45] (4);
        \draw (4) to (5);
        \node at (2,-1) {$\sigma=(1)(2453)$};
        \end{scope}
      \end{tikzpicture}
      \caption{The graph associated to $\m=(3,4,4,5,5)$ and the eight increasing spanning forests of $G_{\m}$ with partition $(4,1)$, with their respective permutations.}
      \label{fig:forests5}
 \end{figure}

 \end{Exa}

 The rest of this section is devoted to prove that $X$ is multiplicative and satisfies the modular law, and to find a recurrence for the values $X(K_n)$ at complete graphs.

 \begin{Prop}
 \label{prop:multiplicative}
 The function $X$ is multiplicative.
 \end{Prop}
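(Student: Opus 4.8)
The plan is to work with the forest description $X(\m)=\sum_{F\in\F(G_{\m})}q^{\wt_{\m}(F)}y_{\lambda(F)}$ and to exhibit a weight- and partition-compatible bijection induced by the block decomposition. Write $\m=\m_1\cup\m_2$ and let $n_1$ be the size of the domain of $\m_1$, so that $\m(i)=\m_1(i)\le n_1$ for $i\le n_1$ and $\m(i)=n_1+\m_2(i-n_1)$ for $i>n_1$; in particular $G_{\m}$ has no edge joining the block $\{1,\dots,n_1\}$ to the block $\{n_1+1,\dots,n_1+n_2\}$. Hence any spanning forest $F$ of $G_{\m}$ is the disjoint union of its restriction $F_1$ to the first block and its restriction $F_2'$ to the second block, and applying the order-preserving relabelling $i\mapsto i-n_1$ to $F_2'$ yields a spanning forest $F_2$ of $G_{\m_2}$. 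Since the components of $F$ are exactly those of $F_1$ together with those of $F_2'$, and a forest is increasing iff each of its components is an increasing tree, $F$ is increasing iff both $F_1$ and $F_2$ are. Thus $F\mapsto(F_1,F_2)$ is a bijection $\F(G_{\m_1\cup\m_2})\to\F(G_{\m_1})\times\F(G_{\m_2})$, with inverse the relabelled disjoint union.

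First, $\lambda(F)$, the multiset of component sizes of $F$, is the union of $\lambda(F_1)$ and $\lambda(F_2)$, so $y_{\lambda(F)}=y_{\lambda(F_1)}y_{\lambda(F_2)}$. Second --- and this is the substantive point --- I would check that $\wt_{\m}(F)=\wt_{\m_1}(F_1)+\wt_{\m_2}(F_2)$. Recall $\wt_{\m}(F)=\inv_{G_{\m}}(F)+\sum_{T}\wt(T)$, the sum over the components $T$ of $F$. The length $|\{w\in V(T):u<w<v\}|$ of an edge $\{u,v\}$ of $T$ depends only on $T$ and is preserved by the relabelling, so $\sum_{T}\wt(T)$ splits as the corresponding sums for $F_1$ and $F_2$. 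For the inversions, note that a $G_{\m}$-inversion $\{v,w\}$ of $F$ is an edge of $G_{\m}$, hence lies inside one block; if $\{v,w\}\subseteq\{1,\dots,n_1\}$, then $v$ and $w$ lie in components of $F$ contained in the first block, i.e.\ in components of $F_1$, and since every component of $F$ in the first block has smaller root than every component in the second block, the global ordering of the components of $F$ restricts to the ordering of the components of $F_1$. Therefore $\{v,w\}$ is a $G_{\m}$-inversion of $F$ precisely when it is a $G_{\m_1}$-inversion of $F_1$; after relabelling, the same holds for the second block, so $\inv_{G_{\m}}(F)=\inv_{G_{\m_1}}(F_1)+\inv_{G_{\m_2}}(F_2)$. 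Combining the two parts gives the additivity.

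Summing over the bijection then yields
\[
X(\m)=\sum_{F}q^{\wt_{\m}(F)}y_{\lambda(F)}=\Bigl(\sum_{F_1}q^{\wt_{\m_1}(F_1)}y_{\lambda(F_1)}\Bigr)\Bigl(\sum_{F_2}q^{\wt_{\m_2}(F_2)}y_{\lambda(F_2)}\Bigr)=X(\m_1)X(\m_2).
\]
The only delicate point is the inversion bookkeeping: one must verify that no inversion straddles the two blocks and that the root order of the components behaves well under the splitting; everything else is formal. The argument can equally be phrased on the permutation side (Definition \ref{def:increasingfor}), using that $\sigma\le\m$ forces $\sigma$ to permute each block, that the cycles of $\sigma$ in the first block precede those in the second in $\sigma^{c}$, and that in $\inv_{\m}$ the constraint $j\le\m(i)$ with $i$ in the first block forces $j$ into the first block as well.
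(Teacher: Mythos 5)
Your proposal is correct and follows essentially the same route as the paper: split each increasing spanning forest of the ordered union into its restrictions to the two blocks, observe that partitions multiply and weights add (which the paper asserts without detail and you verify by noting that $G$-inversions are edges and hence cannot straddle the blocks, and that root order restricts correctly), then factor the sum. The extra bookkeeping you supply is exactly the justification the paper leaves implicit.
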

 \begin{proof}
 Let $G=G_1\cup G_2$ be the ordered disjoint union of $G_1$ and $G_2$. Clearly, every increasing spanning forest $F$ of $G$ is a union $F_1\cup F_2$ where $F_i$ is an increasing spanning forest of $G_i$ for $i=1,2$. Conversely, every such union $F_1\cup F_2$ is an increasing spanning forest of $G$. Moreover, we have that $\wt_G(F)=\wt_{G_1}(F_1)+\wt_{G_2}(F_2)$ and $y_{\lambda(F)}=y_{\lambda(F_1)}y_{\lambda(F_2)}$, hence
 \begin{align*}
 X(G)=&\sum_{F\in \F(G)}q^{\wt_{G}(F)}y_{\lambda(F)}\\
     =&\sum_{F_1\in \F(G_1)}\sum_{F_2\in \F(G_2)}q^{\wt_{G_1}(F_1)+\wt_{G_2}(F_2)}y_{\lambda(F_1)}y_{\lambda(F_2)}\\
     =&\bigg(\sum_{F_1\in \F(G_1)}q^{\wt_{G_1}(F_1)}y_{\lambda(F_1)}\bigg)\bigg(\sum_{F_2\in \F(G_2)}q^{\wt_{G_2}(F_2)}y_{\lambda(F_2)}\bigg)=X(G_1)X(G_2).
 \end{align*}
 This finishes the proof.
 \end{proof}

 \begin{Prop}
  \label{prop:forestmod}
 The function $X$ satisfies the modular law.
  \end{Prop}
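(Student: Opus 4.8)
The plan is to check each of the two relations of the modular law directly, working in the permutation model $X(\m)=\sum_{\sigma\le\m}q^{\wt_\m(\sigma)}y_{\lambda(\sigma)}$ (one could equivalently argue with increasing spanning forests, transporting everything through the weight- and partition-preserving bijection $\sigma\mapsto F_{\sigma,\m}$). First I would rewrite $(1+q)X(\m_1)=qX(\m_0)+X(\m_2)$ in the telescoped form
\[
X(\m_2)-X(\m_1)=q\bigl(X(\m_1)-X(\m_0)\bigr),
\]
which is the shape the combinatorics prefers: in both cases of Definition~\ref{def:modular} one has $\m_0\le\m_1\le\m_2$ pointwise, each passage raising the value at a single argument by exactly $1$, so $S_{n,\m_0}\subseteq S_{n,\m_1}\subseteq S_{n,\m_2}$. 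When $\m(i_0)$ is raised from $v-1$ to $v$, the only newly admissible pair is $(i_0,v)$, hence for every $\sigma$ one has $\wt_{\m'}(\sigma)-\wt_\m(\sigma)\in\{0,1\}$, equal to $1$ exactly when the letter $i_0$ appears to the right of the letter $v$ in the word $\sigma^c$.

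Using this I would expand $X(\m_1)-X(\m_0)$ as a sum over the new permutations in $S_{n,\m_1}\setminus S_{n,\m_0}$, each weighted by $q^{\wt_{\m_1}}$, plus a correction $(q-1)\sum q^{\wt_{\m_0}(\sigma)}y_{\lambda(\sigma)}$ over the $\sigma\le\m_0$ whose weight jumps; likewise for $X(\m_2)-X(\m_1)$. Substituting into the telescoped identity and cancelling reduces the claim to a short list of equalities between such restricted generating functions, which I would establish by exhibiting explicit cycle-type-preserving bijections between the relevant sets of permutations. The natural bijections are built from the transposition exchanging the two ``interchangeable'' values the hypothesis provides: $b:=\m_1(i)$ and $b+1$ in Case~\ref{item:1}, where $\m_1(b)=\m_1(b+1)$ says these two values have the same $\m$-reach while $\m_1(i-1)<\m_1(i)<\m_1(i+1)$ says $i$ is the \emph{only} argument with $\m_1$-value $b$; and $i$ and $i+1$ in Case~\ref{item:2}, where $\m_1(i+1)=\m_1(i)+1$ plays the ``same reach'' role and $\m_1^{-1}(i)=\emptyset$ forces $\m_1(j)\ge i+1$ whenever $\sigma(j)=i$ — precisely what is needed for a conjugated permutation to stay $\le\m_2$.

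The step I expect to be the real obstacle is the weight bookkeeping. Because $\wt_\m(\sigma)=\inv_\m(\sigma^c)$ is a statistic of the linearized word $\sigma^c$, and conjugating $\sigma$ by a transposition can change which element opens a cycle — hence the order of whole cycle-blocks in $\sigma^c$ — one cannot simply say two letters are swapped; the effect on $\wt_\m$ must be followed $\m$-inversion by $\m$-inversion. In fact a plain conjugation by $(b,\,b+1)$ does not always change the weight by exactly $1$ (it does so only when both $b$ and $i$ precede $b+1$ in $\sigma^c$), so the bijections will need a correction — for instance replacing the conjugation, when $b+1$ lies in the cycle of $i$, by the operation that moves $b+1$ to the position immediately after $i$ within that cycle, with a separate treatment when $b+1$ lies in another cycle. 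Showing that the corrected maps raise the weight by one and land onto the intended target sets is where the modular-law hypotheses enter a second, decisive time; the rest — tallying the $\m$-inversions not involving the transposed values, and checking well-definedness — is routine.
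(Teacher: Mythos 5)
Your plan runs on the same engine as the paper's argument: pair each $\sigma\le\m_1$ with its conjugate by the distinguished transposition ($(j,j+1)$ with $j=\m_1(i)$ in Case~\ref{item:1}, $(i,i+1)$ in Case~\ref{item:2}) and track how $\wt_{\m}=\inv_{\m}((\cdot)^c)$ changes; your telescoped decomposition into ``new'' permutations of $S_{n,\m_1}\setminus S_{n,\m_0}$ plus weight-jumping ones, and your observation that raising $\m$ at one argument changes each weight by $0$ or $1$, are both correct. The problem is that the proposal stops exactly where the proposition's content begins. You never write down the ``short list of equalities between restricted generating functions,'' never fix the bijections (the correction is offered only as ``for instance\dots''), and you explicitly defer the two claims that carry all the weight: that the corrected maps shift $\wt$ by exactly one and that they are bijections onto the intended target sets. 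That verification is not a routine afterthought; it is the proof. In the paper it is carried out by a case analysis on the relative order of the three distinguished letters in $\sigma^c$ (Tables \ref{tab:sigma}--\ref{tab:sigma3}), and it is there --- and only there --- that the hypotheses $\m_1(\m_1(i))=\m_1(\m_1(i)+1)$, the fact that $i$ is the unique argument with $\m_1(i)=j$, resp.\ $\m_1^{-1}(i)=\emptyset$, actually enter, by guaranteeing that inversions formed with letters outside the triple are unaffected by the pairing. A proof that omits this bookkeeping has omitted the proposition.

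Moreover, the specific assertions you do make in place of that analysis are unverified and, as stated, unreliable. The effect of conjugation on $\wt_{\m}$ is not governed only by the relative position of $b$, $b+1$, $i$ in $\sigma^c$: when the transposed values include a cycle minimum (e.g.\ $b$ opens the cycle containing $b+1$, or $b$ and $b+1$ each open their own cycles; in Case~\ref{item:2} the delicate letter is $i$ itself), the word $\sigma^c$ is not merely letter-swapped --- whole blocks are rotated or reordered --- so constrained pairs involving a third letter, and even pairs disjoint from the triple, can change status; your parenthetical criterion does not account for this. Your proposed fix (move $b+1$ to the position immediately after $i$ inside its cycle) addresses only one of these configurations, is not shown to be injective, to land in $S_{n,\m_2}$, or to raise the weight by exactly one, and no analogue is proposed for Case~\ref{item:2}, where the matching of the two ``new'' sets $\{\sigma\le\m_1:\sigma(i+1)=j\}$ and $\{\sigma\le\m_2:\sigma(i)=j\}$ and of the weight-jumping permutations must be set up separately. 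Until those bijections are defined precisely and their weight behaviour checked case by case against the hypotheses of Definition~\ref{def:modular}, what you have is a correct framing of the problem, not a proof of the modular law.
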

  \begin{proof}
  Let $\m_1$, $\m_0$, $\m_2$, and $i$ as in Condition \ref{item:1}, and let $j:=\m_1(i)$. Let $\sigma\leq \m_1$ be a permutation and denote by $\sigma'=(j,j+1)\sigma(j,j+1)$ the conjugation of $\sigma$ with the transposition $(j,j+1)$. In particular, $\sigma'^c=(j,j+1)\sigma^c$ and $\sigma'$ have the same cycle partition as $\sigma$. Moreover, since $\m_1$, $\m_0$, $\m_2$, and $i$ satisfy Condition \ref{item:1}, we have that $\wt_{\m_2}(\sigma')=\wt_{\m_2}(\sigma)\pm1$ depending on the relative position of $j,j+1$ in $\sigma^c$.\par
   If $\sigma\leq \m_0$, then
  \[
  (1+q)q^{\wt_{\m_1}(\sigma)}=q\cdot q^{\wt_{\m_0}(\sigma)}+q^{\wt_{\m_2}(\tau)}
  \]
  where $\tau$ is chosen to be either $\sigma$ or $\sigma'$ depending on the relative position of $i,j,j+1$ in $\sigma^c$, as shown in Table \ref{tab:sigma}.
  \begin{table}[htb]
      \begin{tabular}{c|c|c|c|c}
                    & $\wt_{\m_1}(\sigma)$ & $\wt_{\m_0}(\sigma)$ & $\wt_{\m_2}(\sigma)$ &$\wt_{\m_2}(\sigma')$\\
            \hline
         $i$, $j$, $j+1$&0 &0&0&\\
         $i$, $j+1$, $j$ & 1 &1&1&\\
         $j$, $i$, $j+1$ & 1 &0&&2\\
         $j$, $j+1$, $i$&  1&0&2&\\
         $j+1$, $i$, $j$& 1 &1&&1\\
         $j+1$, $i$, $j$& 2 &1&3&
      \end{tabular}
      \caption{The contributions of $i,j,j+1$ to the weights of $\sigma$ and $\sigma'$ when $\sigma\leq \m_0$.}
      \label{tab:sigma}
     \end{table}

   If $\sigma\not\leq \m_0$, then $\sigma(i)=j$ which means that $i$ and $j$ are adjacent in $\sigma^c$. Then
   \[
  (1+q)q^{\wt_{\m_1}(\sigma)}= q^{\wt_{\m_2}(\sigma)}+q^{\wt_{\m_2}(\sigma')},
  \]
  as seen in Table \ref{tab:sigma1}.  This proves that $(1+q)X(\m_1)=qX(\m_0)+X(\m_2)$.
   \begin{table}[htb]
    \begin{tabular}{c|c|c|c}
      & $\wt_{\m_1}(\sigma)$ &  $\wt_{\m_2}(\sigma)$ &$\wt_{\m_2}(\sigma')$\\
            \hline
         $i$, $j$, $j+1$&0 &0&1\\
         $j+1$, $i$, $j$& 2 &3&2
      \end{tabular}
      \caption{The contributions of $i,j,j+1$ to the weights of $\sigma$ and $\sigma'$ when $\sigma\not\leq \m_0$.}
      \label{tab:sigma1}
  \end{table}

  Let $\m_1$, $\m_0$, $\m_2$, and $i$ as in Condition \ref{item:2}, and let $j:=\m_1(i+1)$. Let $\sigma\leq \m_1$ be a permutation and denote by $\sigma'=(i,i+1)\sigma(i,i+1)$. In particular $\sigma'^c=(i,i+1)\sigma^c$ and $\sigma'$ and $\sigma$ have the same cycle partition. If $\sigma\leq \m_0$, then
  \[
  (1+q)q^{\wt_{\m_1}(\sigma)}=q\cdot q^{\wt_{\m_0}(\sigma)}+q^{\wt_{\m_2}(\tau)}
  \]
  where $\tau$ is chosen to be either $\sigma$ or $\sigma'$ depending on the relative position of $i,i+1,j$ in $\sigma^c$, as shown in Table  \ref{tab:sigma2}.

  \begin{table}[htb]
      \begin{tabular}{c|c|c|c|c}
                   & $\wt_{\m_1}(\sigma)$ & $\wt_{\m_0}(\sigma)$ & $\wt_{\m_2}(\sigma)$ &$\wt_{\m_2}(\sigma')$\\
            \hline
         $i$, $i+1$, $j$&0 &0&0&\\
         $i$, $j$, $i+1$ & 1 &0&&2\\
         $i+1$, $i$, $j$ & 1 &1& 1&\\
         $i+1$, $j$, $i$&  1&1&&1\\
         $j$, $i$, $i+1$& 1 &0&2&\\
         $j$, $i+1$, $i$& 2 &1&3&
      \end{tabular}
      \caption{The contributions of $i,i+1,j$ to the weights of $\sigma$ and $\sigma'$ when $\sigma\leq \m_0$.}
      \label{tab:sigma2}
     \end{table}

   If $\sigma\not\leq \m_0$, then $\sigma(i+1)=j$ which means that $i+1$ and $j$ are adjacent in $\sigma^c$. Then
   \[
  (1+q)q^{\wt_{\m_1}(\sigma)}= q^{\wt_{\m_2}(\sigma)}+q^{\wt_{\m_2}(\sigma')},
  \]
  as seen in Table \ref{tab:sigma3}. This proves that $(1+q)X(\m_1)=qX(\m_0)+X(\m_2)$ and  finishes the proof.
 \begin{table}[htb]
    \begin{tabular}{c|c|c|c}
      & $\wt_{\m_1}(\sigma)$ &  $\wt_{\m_2}(\sigma)$ &$\wt_{\m_2}(\sigma')$\\
            \hline
         $i$, $i+1$, $j$&0 &0&1\\
         $i+1$, $j$, $i$& 1 &2&1
      \end{tabular}
      \caption{The contributions of $i,i+1,j$ to the weights of $\sigma$ and $\sigma'$ when $\sigma\not\leq \m_0$.}
      \label{tab:sigma3}
  \end{table}

  \end{proof}

  \begin{Prop}
  \label{prop:forestcomp}
  We have the following recursion for the complete graph.
  \[
  X(K_n)=\sum_{i=1}^{n} X(K_{n-i})y_{i}\prod_{j=n-i+1}^{n-1}[j]_q, \quad X(k_0)=1.
  \]
  \end{Prop}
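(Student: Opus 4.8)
I would argue as follows. Specialize to the Hessenberg function $\m$ of $K_n$ (so $\m(i)=n$ for all $i$); then $S_{n,\m}=S_n$ and, by Definition \ref{def:increasingfor}, $X(K_n)=\sum_{F\in\F(K_n)}q^{\wt_{K_n}(F)}y_{\lambda(F)}$, where $\F(K_n)$ is the set of all increasing forests with vertex set $[n]$ and, for $F=T_1\cup\dots\cup T_k$, one has $\wt_{K_n}(F)=\inv_{K_n}(F)+\sum_j\wt(T_j)$. I would then group the forests $F$ according to the size $i$ of the tree $T_1$ with smallest root (whose root is therefore the vertex $1$). Writing $S=V(T_1)$, the remaining trees form an increasing forest $F'$ with vertex set $[n]\setminus S$; let $\overline{F'}\in\F(K_{n-i})$ be obtained from $F'$ by the order-preserving relabeling of $[n]\setminus S$. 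Then $y_{\lambda(F)}=y_i\,y_{\lambda(\overline{F'})}$, and the data $(S,T_1,\overline{F'})$ ranges over all triples with $S\ni 1$, $|S|=i$, $T_1$ an increasing tree on $S$, and $\overline{F'}\in\F(K_{n-i})$.

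The heart of the matter is the weight decomposition
\[
\wt_{K_n}(F)=\wt_{K_{n-i}}(\overline{F'})+\wt(T_1)+c(S),\qquad c(S):=\bigl|\{(v,w):v<w,\ v\notin S,\ w\in S\}\bigr|.
\]
To prove it I would split the inversions of $F$ according to membership of the two vertices in $S$: a pair inside $S$ lies in the single tree $T_1$ and is never an inversion; a pair with the smaller vertex in $S$ and the larger outside is never an inversion since $T_1$ is the \emph{first} tree; a pair $\{v,w\}$ with $v<w$, $v\notin S$, $w\in S$ is \emph{always} an inversion because $v$ lies in a strictly later tree, accounting for $c(S)$; and pairs outside $S$ correspond to inversions of $\overline{F'}$ under the relabeling. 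One also checks $\sum_{j\ge 2}\wt(T_j)$ is preserved by the relabeling — crucially because $\wt(T)$ counts, for each edge, only vertices of $T$ itself — so it equals the tree-weight part of $\wt_{K_{n-i}}(\overline{F'})$. Since $c(S)$ depends only on $S$ and $\wt(T_1)$ only on the order-type of $T_1$, summing $q^{\wt_{K_n}(F)}y_{\lambda(F)}$ over all $F$ with $|V(T_1)|=i$ factors as
\[
y_i\cdot X(K_{n-i})\cdot\Bigl(\sum_{S\ni 1,\,|S|=i}q^{c(S)}\Bigr)\cdot\Bigl(\sum_{T\text{ incr. tree on }[i]}q^{\wt(T)}\Bigr).
\]

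It remains to evaluate the two auxiliary sums. Identifying $S=\{1=s_1<\dots<s_i\}\subseteq[n]$ with the $(i-1)$-subset $\{s_2-1<\dots<s_i-1\}$ of $[n-1]$ turns $c(S)=\sum_m(s_m-m)$ into the standard statistic computing a Gaussian binomial, so $\sum_S q^{c(S)}=\binom{n-1}{i-1}_q$. For increasing trees on $[i]$, attaching the largest vertex $i$ to a parent $p\in[i-1]$ adds exactly $i-1-p$ to the weight and changes no other edge length, giving $\sum_{T\text{ on }[i]}q^{\wt(T)}=[i-1]_q\sum_{T\text{ on }[i-1]}q^{\wt(T)}=[i-1]_q!$ by induction. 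Then $\binom{n-1}{i-1}_q\,[i-1]_q!=[n-1]_q!/[n-i]_q!=\prod_{j=n-i+1}^{n-1}[j]_q$, and summing over $i=1,\dots,n$ yields the recursion; the base case $X(K_0)=1$ is immediate. The step I expect to require the most care is the weight-splitting identity: verifying that the subtree length statistic is intrinsic to the subtree (so the relabeling is harmless) and that the "cross-inversions" between $T_1$ and the rest collapse to the one-sided count $c(S)$.
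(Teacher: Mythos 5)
Your proof is correct and takes essentially the same route as the paper: group forests by the size $i$ of the tree containing vertex $1$, split the weight into the intrinsic weight of that tree (summing to $(i-1)!_q$) and the cross-inversions with the remaining vertices (summing to $\binom{n-1}{i-1}_q$), then use $(i-1)!_q\binom{n-1}{i-1}_q=\prod_{j=n-i+1}^{n-1}[j]_q$. You merely supply the details that the paper states or cites (the weight-splitting identity, the q-binomial statistic from P\'olya, and the $(i-1)!_q$ count), and your exponent $\sum_j(l_j-j-1)$ for the cross-inversions is the correct one, the extra factor $(j-1)$ in the paper's displayed formula being an apparent typo.
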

  \begin{proof}
  We have that
  \[
  X(K_n)=\sum_{F\in \F(K_n)} q^{\wt_{K_n}(F)}y_{\lambda(F)}.
  \]
  We split this sum by the number of vertices of the tree $T_1$ with root $1$. If $T_1$ has size $i$ and other vertices $l_1,\ldots, l_{i-1}$, then the contribution of this tree to the weight of each forest containing $T_1$ is
  \[
  \wt(T_1)+\sum_{j=1}^{i-1} (l_j-j-1)(j-1).
  \]
  Hence
  \[
  \sum_{F\in F(K_n), T_1\subset F}q^{\wt_{K_n}(F)}y_{\lambda(F)}=q^{\wt(T_1)+\sum_{j=1}^{i-1} (l_j-j-1)(j-1)}y_iX(K_{n-i}).
  \]

  Varying $T_1$, we have that the sum $\sum_{1<l_1<l_2<\ldots<l_{i-1}\leq n}q^{\sum(l_j-j-1)(j-1)}$ is given by the $q$-binomial coefficient ${n-1\choose i-1}_q$ (see \cite{Polya}), while the sum of the weights of all increasing trees with vertex set $[i]$ is $(i-1)!_q$. Then
  \[
  \sum_{F\in F(K_n), |T_1|=i}q^{\wt_{K_n}(F)}y_{\lambda(F)} =(i-1)!_q{ n-1 \choose i-1}_qX(K_{n-i})y_i
  \]
  and the result follows.
   \end{proof}
   For a subset $I=\{i_1,\ldots, i_k\}\subset [n-1]$ we define $\lambda(I)$ as the conjugate of the partition associated to the composition $(i_1,i_2-i_1,\ldots, i_k-i_{k-1}, n-i_k)$.
\begin{Prop}
\label{prop:kn}
We have that
 \[
 X(K_n)=\sum_{I\subset [n-1]} y_{\lambda(I)} \prod_{j\in I}[j]_q.
 \]
 \end{Prop}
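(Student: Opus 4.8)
The plan is to prove the identity by induction on $n$, showing that the right-hand side obeys the same recursion as $X(K_n)$ in Proposition~\ref{prop:forestcomp}. Set $R_n:=\sum_{I\subseteq[n-1]} y_{\lambda(I)}\prod_{j\in I}[j]_q$. For $n=0$ the only subset is the empty subset of $[-1]$, which contributes the empty product on both sides, so $R_0=1=X(K_0)$. Since Proposition~\ref{prop:forestcomp} expresses $X(K_n)$ in terms of $X(K_0),\dots,X(K_{n-1})$ and the value at $K_0$, it suffices to verify
\[
R_n=\sum_{i=1}^{n} R_{n-i}\,y_i\prod_{j=n-i+1}^{n-1}[j]_q .
\]

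I would establish this by grouping the subsets $I\subseteq[n-1]$ according to the size of the block peeled off in the recursion. Assign to $I$ the integer $i\in\{1,\dots,n\}$ for which $n-i$ is the largest element of $[n-1]\setminus I$ (with $i=n$ when $I=[n-1]$). Then $\{n-i+1,\dots,n-1\}\subseteq I$, $\,n-i\notin I$, and $I\mapsto J:=I\cap[n-i-1]$ is a bijection from the set of $I\subseteq[n-1]$ with parameter $i$ onto the set of all $J\subseteq[n-i-1]$. Under this bijection the weight factors as $\prod_{j\in I}[j]_q=\bigl(\prod_{j=n-i+1}^{n-1}[j]_q\bigr)\prod_{j\in J}[j]_q$, and the composition governing $\lambda(I)$ is the composition governing $\lambda(J)$ (formed for the ground set $[n-i-1]$ with total $n-i$) with the extra part $i$ adjoined; passing to partitions this gives $y_{\lambda(I)}=y_i\,y_{\lambda(J)}$. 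Summing over $J\subseteq[n-i-1]$ recovers $R_{n-i}\,y_i\prod_{j=n-i+1}^{n-1}[j]_q$, and summing over $i=1,\dots,n$ then yields $R_n$, which is the desired recursion.

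The only point needing genuine attention is the bookkeeping in the previous sentence: one must verify that deleting from $I$ the top run $\{n-i+1,\dots,n-1\}$ together with the gap at $n-i$ corresponds precisely, on the composition side, to stripping off the part $i$, so that the factorization of $y_{\lambda(I)}$ and that of the product of $q$-integers happen simultaneously; the extreme case $I=[n-1]$ (where $\lambda(I)=(n)$ and $J=\emptyset$) should be checked by hand. Everything else is formal. As an alternative to the induction one can unroll Proposition~\ref{prop:forestcomp} directly: iterating the recursion expresses $X(K_n)$ as a sum over compositions $(\gamma_1,\dots,\gamma_m)\models n$ of $y_{\gamma_1}\cdots y_{\gamma_m}$ times $\prod_{j\in[n-1]\setminus D}[j]_q$, where $D$ is the set of proper suffix sums of $(\gamma_1,\dots,\gamma_m)$, and then the classical bijection between compositions of $n$ and subsets of $[n-1]$ rewrites this $q$-integer product as $\prod_{j\in I}[j]_q$ for the corresponding $I$ — the same computation, carried out all at once.
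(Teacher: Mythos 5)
Your strategy is the same as the paper's own proof: induction on $n$, grouping the subsets $I\subseteq[n-1]$ by the largest element $n-i$ of the complement, peeling off the run $\{n-i+1,\dots,n-1\}$, factoring the $q$-weights, and invoking Proposition \ref{prop:forestcomp}; the paper writes the same decomposition $I=I_1\cup I_2$ and asserts $y_{\lambda(I)}=y_{\lambda(I_1)}y_i$. The trouble is that the one step you single out as ``needing genuine attention'' and then wave through is exactly the step that fails with the definition of $\lambda(I)$ given before the statement. If $I=J\cup\{n-i+1,\dots,n-1\}$ with $J\subseteq[n-i-1]$, the composition attached to $I$ is \emph{not} the composition attached to $J$ with a part $i$ adjoined: it is that composition with its last part increased by $1$ and with $i-1$ parts equal to $1$ appended (for $n=5$, $i=2$, $J=\emptyset$ one gets $(4,1)$, not $(3,2)$). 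Moreover conjugation does not commute with adjoining a part, so the conclusion $y_{\lambda(I)}=y_i\,y_{\lambda(J)}$ also fails as stated: for $n=4$ and $I=\{1,3\}$ the stated definition gives $\lambda(I)=(3,1)$ (conjugate of the sorted composition $(1,2,1)$), whereas $i=2$, $J=\{1\}$ gives $y_i\,y_{\lambda(J)}=y_{2,2}$.

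What actually propagates through your induction is the complement reading: attach to $I$ the partition $\mu(I)$ obtained by sorting the composition $(d_1,d_2-d_1,\dots,n-d_r)$ of $D=[n-1]\setminus I=\{d_1<\dots<d_r\}$; then deleting the top run of $I$ deletes exactly one part equal to $i$ from $\mu(I)$, the $q$-weights factor as you say, and the recursion of Proposition \ref{prop:forestcomp} is reproduced verbatim. (A direct check at $n=4$ confirms this is what the identity requires: the coefficient of $y_{2,2}$ in $X(K_4)$ is $[3]_q$, coming from $I=\{1,3\}$, not $[2]_q$ as the conjugate reading of $\lambda(\{2\})=(2,2)$ would give.) Your alternative argument at the end, unrolling Proposition \ref{prop:forestcomp} over compositions $\gamma$ of $n$ and passing to the subset complementary to the suffix-sum set, is in fact the clean way to land on this correct correspondence, since there the partition is visibly $\gamma$ sorted and no conjugation enters. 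Be aware that the paper's own proof asserts the same unproved factorization $y_{\lambda(I)}=y_{\lambda(I_1)}y_i$, so your proposal faithfully reproduces the intended argument; but as written, its crucial identity is false under the stated definition of $\lambda(I)$ and must be replaced by the complement/suffix-sum bookkeeping.
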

 \begin{proof}
 We will proceed by induction on $n$. The base case $n=0$ is trivial. For $I\subset [n-1]$ define $I_1$ and $I_2$ as $I_1=I\cap [n-i-1]$, $I_2=\{n-i+1,\ldots, n-1\}$ and $I=I_1\cup I_2$. Then $y_{\lambda(I)}=y_{\lambda_{I_1}}y_i$ and
 \begin{align*}
 \sum_{I\subset [n-1]} y_{\lambda(I)} \prod_{j\in I}[j]_q=&\sum_{i=1}^n\sum_{I_1\subset [n-i-1]}y_{\lambda(I_1)}y_i\prod_{j\in I_1}[j]_q\prod_{j=n-i+1}^{n-1}[j]_q\\
      =&\sum_{i=1}^n X(K_{n-i})y_i\prod_{j=n-i+1}^{n-i}[j]_q&\text{ by induction}\\
      =&X(K_n) & \text{ by Proposition \ref{prop:forestcomp}.}
 \end{align*}
  \end{proof}

  We can now relate the coefficients of $X(K_n)$ with the $q$-Stirling numbers of the first kind $s_q(n,k)$.
 \begin{Cor}
 \label{cor:stir}
 We have that
 \[
 \sum_{\lambda\vdash n, \ell(\lambda)=k}c_{\lambda}(K_n)=s_q(n,k).
 \]
 \end{Cor}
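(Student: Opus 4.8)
The plan is to apply a $q$-deformation of the specialization map and reduce the statement to the defining identity of $s_q(n,k)$. Here $y_n=\rho_n$, so $X(K_n)=\sum_{\lambda\vdash n}c_\lambda(K_n)\rho_\lambda$. By Proposition \ref{prop:rhoeq}(4) the coefficient of $h_n$ in $\rho_n$ equals $[n]_q$, and every other term is a product $h_\mu$ with $\ell(\mu)\ge2$; since $[n]_q$ is a unit of $\mathbb{Q}(q)$, one can solve recursively for $h_n$ in terms of $\rho_1,\ldots,\rho_n$, so the $\rho_n$ are algebraically independent generators of $\Lambda\otimes\mathbb{Q}(q)$ and $\{\rho_\lambda\}$ is a $\mathbb{Q}(q)$-basis. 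Let $L\colon\Lambda\otimes\mathbb{Q}(q)\to\mathbb{Q}(q)[x]$ be the $\mathbb{Q}(q)$-linear map with $L(\rho_\lambda)=x^{\ell(\lambda)}$. Since $\rho_\lambda\rho_i=\rho_{\lambda\cup\{i\}}$ has length $\ell(\lambda)+1$, we get $L(\rho_\lambda\rho_i)=x\,L(\rho_\lambda)$, and hence $L\big(X(K_m)\rho_i\big)=x\,L(X(K_m))$ for every $m$ and $i$. Writing $f_n(x):=L(X(K_n))=\sum_{k=1}^{n}\big(\sum_{\ell(\lambda)=k}c_\lambda(K_n)\big)x^k$, the corollary is equivalent to $f_n(x)=\sum_k s_q(n,k)x^k$.

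Applying $L$ to the recursion of Proposition \ref{prop:forestcomp} and substituting $m=n-i$ yields
\[
f_n(x)=x\sum_{m=0}^{n-1}f_m(x)\prod_{j=m+1}^{n-1}[j]_q,\qquad f_0(x)=1.
\]
Subtracting $[n-1]_q$ times the corresponding identity for $n-1$, all summands with $m\le n-2$ cancel since $\prod_{j=m+1}^{n-1}[j]_q=[n-1]_q\prod_{j=m+1}^{n-2}[j]_q$, and only the $m=n-1$ term (with empty product) remains, giving $f_n(x)=(x+[n-1]_q)f_{n-1}(x)$. With $f_0(x)=1$ this forces $f_n(x)=\prod_{j=0}^{n-1}(x+[j]_q)$, where $[0]_q=0$.

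Finally, substituting $-x$ for $x$ in $x(x-[1]_q)\cdots(x-[n-1]_q)=\sum_k(-1)^{n-k}s_q(n,k)x^k$ gives $\prod_{j=0}^{n-1}(x+[j]_q)=\sum_k s_q(n,k)x^k$; comparing coefficients of $x^k$ with the expression above for $f_n(x)$ yields $\sum_{\ell(\lambda)=k}c_\lambda(K_n)=s_q(n,k)$. There is no real obstacle in this argument; the only delicate point is the existence of $L$, i.e.\ that $\{\rho_\lambda\}$ is a basis, which follows at once from the triangularity in Proposition \ref{prop:rhoeq}(4). As an alternative one can first observe $X(K_n)=n!_q h_n$ --- substitute the inductive value $(n-i)!_q h_{n-i}$ into Proposition \ref{prop:forestcomp}, use $(n-i)!_q\prod_{j=n-i+1}^{n-1}[j]_q=(n-1)!_q$ and $\sum_{i=1}^n h_{n-i}\rho_i=[n]_q h_n$ from Proposition \ref{prop:rhoeq}(1) --- and then apply the algebra homomorphism $\epsilon_q$ with $\epsilon_q(\rho_n)=x$, computing $\epsilon_q(h_n)=\frac{1}{n!_q}\prod_{j=0}^{n-1}(x+[j]_q)$ by solving the $\epsilon_q$-image of Proposition \ref{prop:rhoeq}(1).
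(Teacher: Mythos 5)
Your proof is correct, but it takes a genuinely different route from the paper. The paper deduces the corollary in one line from Proposition \ref{prop:kn}: in the expansion $X(K_n)=\sum_{I\subset[n-1]}y_{\lambda(I)}\prod_{j\in I}[j]_q$ the partition $\lambda(I)$ has length $n-|I|$, so $\sum_{\ell(\lambda)=k}c_\lambda(K_n)=\sum_{|I|=n-k}\prod_{j\in I}[j]_q$, which is exactly what one gets for $s_q(n,k)$ by expanding Gould's product $x(x-[1]_q)\cdots(x-[n-1]_q)$ coefficientwise, i.e.\ $s_q(n,k)=e_{n-k}([1]_q,\ldots,[n-1]_q)$. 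You bypass Proposition \ref{prop:kn} entirely: you work directly with the recursion of Proposition \ref{prop:forestcomp}, introduce the length-recording specialization $L(\rho_\lambda)=x^{\ell(\lambda)}$ (whose existence you rightly reduce to $\{\rho_\lambda\}$ being a $\mathbb{Q}(q)$-basis, via the triangularity in Proposition \ref{prop:rhoeq}; this is a point that does need to be said), and solve the resulting recursion, your telescoping step $f_n=(x+[n-1]_q)f_{n-1}$ being correct, to get the factored identity $f_n(x)=\prod_{j=0}^{n-1}(x+[j]_q)=\sum_k s_q(n,k)x^k$ after substituting $x\mapsto -x$ in Gould's definition. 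Your alternative ending, first showing $X_\rho(K_n)=n!_qh_n$ and then applying $\epsilon_q$ with $\epsilon_q(h_n)=\prod_{j=0}^{n-1}(x+[j]_q)/n!_q$, is also correct and is a pleasant $q$-analogue of the specialization argument sketched in the introduction. The trade-off: the paper's argument is essentially immediate once the subset expansion of $X(K_n)$ is available, matching coefficients of $x^k$ one at a time, whereas yours needs only the complete-graph recursion plus the basis fact for $\{\rho_\lambda\}$, and it yields the generating polynomial $\sum_k\big(\sum_{\ell(\lambda)=k}c_\lambda(K_n)\big)x^k$ in product form directly.
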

 \begin{proof}
This follows immediately by noticing that both sides are equal to
 \[
\underset{|I|=n-k}{\sum_{{I\subset [n-1]}}} \prod_{j\in I}[j]_q.
 \]
 \end{proof}

 Finally, we compute the sum of the coefficients  $c_{\lambda}(\m)$ for a given $\m\in \D$.
 \begin{Prop}
For $\m\in \D$, we have that
\[
\sum_{\sigma\leq \m}q^{\wt_{\m}(\sigma)}=\prod_{j=1}^{n}(1+[\m(i)-i]_q).
\]
  \end{Prop}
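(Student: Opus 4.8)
The plan is to identify both sides of the asserted equality with functions on $\D$ that are multiplicative and satisfy the modular law, and then to invoke the principle behind Theorems \ref{thm:csf} and \ref{thm:LLT} (established in \cite{AN}), according to which a multiplicative function on $\D$ obeying the modular law is determined by its values on the complete graphs $K_1,K_2,\dots$. First I would set every $y_n$ equal to $1$ in Definition \ref{def:increasingfor} -- equivalently, pass from $X(\m)$ to the coefficient sum $\sum_{\lambda\vdash n}c_\lambda(\m)$ -- obtaining the function $g\col\D\to\mathbb{N}[q]$ with $g(\m)=\sum_{\sigma\leq\m}q^{\wt_\m(\sigma)}$, which is the left-hand side of the statement. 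Because the proofs of Propositions \ref{prop:multiplicative} and \ref{prop:forestmod} proceed term by term over increasing spanning forests and over permutations, pairing each $\sigma$ with $\sigma$ or $\sigma'$ while keeping the cycle partition fixed, and never use linear independence of the $y_\lambda$, those same arguments show that $g$ is multiplicative and satisfies the modular law. On the other side I would set $P(\m):=\prod_{i=1}^n(1+[\m(i)-i]_q)$ and check that $P$ too is multiplicative, satisfies the modular law, and agrees with $g$ at every complete graph; this forces $g=P$, which is exactly the proposition.

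The complete-graph check and the multiplicativity of $P$ are routine. For $K_n$ the associated Hessenberg function is $\m(i)=n$, so $P(K_n)=\prod_{i=1}^n(1+[n-i]_q)=\prod_{k=0}^{n-1}(1+[k]_q)=\prod_{k=1}^{n-1}(1+[k]_q)$ since $[0]_q=0$; and Proposition \ref{prop:kn} gives $g(K_n)=\sum_{I\subseteq[n-1]}\prod_{j\in I}[j]_q=\prod_{j=1}^{n-1}(1+[j]_q)$, so the two agree. For multiplicativity, if $\m$ is the Hessenberg function of an ordered union $G_1\cup G_2$ with $G_1$ on $[m]$, then $\m(i)-i$ equals $\m_1(i)-i$ on the first block and $\m_2(i-m)-(i-m)$ on the second block, so the defining product of $P(\m)$ factors as $P(\m_1)P(\m_2)$.

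The substantive point is the modular law for $P$, and everything reduces to the single identity
\[
(1+q)\bigl(1+[a]_q\bigr)=q\bigl(1+[a-1]_q\bigr)+\bigl(1+[a+1]_q\bigr)\qquad(a\geq 1),
\]
which is equivalent to $(1+q)[a]_q=q[a-1]_q+[a+1]_q$ and is immediate from $[m]_q=(q^m-1)/(q-1)$, as $q[a-1]_q+[a+1]_q=\bigl(q(q^{a-1}-1)+(q^{a+1}-1)\bigr)/(q-1)=(1+q)(q^a-1)/(q-1)$. In Condition \ref{item:1}, putting $a:=\m_1(i)-i$ -- which is $\geq 1$, since $\m_0\in\D$ forces $\m_0(i)=\m_1(i)-1\geq i$ -- the three functions differ only at $i$ with $\m_k(i)-i=a-1+k$, so with $C:=\prod_{j\neq i}(1+[\m_1(j)-j]_q)$ one has $P(\m_k)=C\,(1+[a-1+k]_q)$ and the relation $(1+q)P(\m_1)=qP(\m_0)+P(\m_2)$ is $C$ times the displayed identity. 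In Condition \ref{item:2}, putting $b:=\m_1(i)-i$ -- which is $\geq 1$, because $\m_1^{-1}(i)=\emptyset$ together with $\m_1(i)\geq i$ rules out $\m_1(i)=i$ -- one also gets $\m_1(i+1)-(i+1)=b$, and the pairs $\bigl(\m_k(i)-i,\,\m_k(i+1)-(i+1)\bigr)$ are $(b,b-1),(b,b),(b+1,b)$ for $k=0,1,2$; writing $D:=\prod_{j\neq i,i+1}(1+[\m_1(j)-j]_q)$ and cancelling the common factor $D(1+[b]_q)$ from $(1+q)P(\m_1)=qP(\m_0)+P(\m_2)$ leaves exactly the displayed identity with $a=b$.

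The hard part is nothing deep but the bookkeeping in this last step: one must be sure that the relevant differences $\m(i)-i$ really are $\geq 1$ (which keeps $[a-1]_q$ meaningful and is forced by the hypotheses of Definition \ref{def:modular}), and that both cases of the modular law are governed by the one identity above. A more self-contained but messier alternative would avoid \cite{AN} and argue by induction on $n$, deleting the vertex $n$: removing it from a forest of $G_\m$ yields a forest of $G_{\m'}$ with $\m'=\min(\m,n-1)$, and $P(\m)=(1+[d]_q)P(\m')$ with $d=\deg(n)$, so one would need to verify that, summed over the isolated placement and the $d$ leaf-placements of $n$, the weight contributions of $n$ realize $q^0$ together with each of $q^0,q^1,\dots,q^{d-1}$ exactly once -- a combinatorial fact that holds but is noticeably less transparent than the modular argument.
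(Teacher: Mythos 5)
Your proposal is correct and follows essentially the same route as the paper: both sides are recognized as multiplicative functions on $\D$ satisfying the modular law, so by \cite[Theorem 1.2]{AN} it suffices to compare values on complete graphs, which is done via Proposition \ref{prop:kn}. The only difference is that you spell out the verification of the modular law for the product $\prod_{i}(1+[\m(i)-i]_q)$ (via $(1+q)[a]_q=q[a-1]_q+[a+1]_q$), a step the paper leaves implicit, and your bookkeeping there is accurate.
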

 \begin{proof}
 Both sides of the equation are multiplicative and satisfy the modular law. By \cite[Theorem 1.2]{AN}, it is enough to prove the Proposition for the complete graph. Let $k_n$ be the Hessenberg function associated to $K_n$, that is $k_n(i)=n$ for every $i\in [n]$. By Proposition \ref{prop:kn}, we have that
 \[
 \sum_{\sigma\leq k_n}q^{\wt_{\m}(\sigma)}=\sum_{I\subset [n-i]}\prod_{j\in I}[j]_q=\prod_{j=1}^{n-1}(1+[j]_q)=\prod_{j=1}^n(1+[k_n(j)-j]_q),
 \]
 which completes the proof.
  \end{proof}

\section{Relation with the chromatic quasisymmetric function and LLT polynomials}

   In this section we prove Theorems \ref{thm:main1}  and \ref{thm:main2}.
    \begin{proof}[Proof of Theorem \ref{thm:main1}]
    We begin by noticing that the right-hand side of Equation \ref{eq:thm1} is precisely $X_{\rho}(h)$.  Since both sides $\csf_q$ and $X$ are multiplicative and satisfy the modular law (by Theorem \ref{thm:csf}, Proposition \ref{prop:multiplicative} and Proposition \ref{prop:forestmod}), it is sufficient to prove the equality in the case of complete graphs. By Proposition \ref{prop:kn}, we have that
    \[
    X_\rho(K_n)=\det\left(\begin{array}{cccccc}
    \rho_{1}& \rho_{2} &\rho_{3}&\cdots&\rho_{n-1} &\rho_{n}\\
    -1& \rho_{1} &\rho_{2}&\cdots&\rho_{n-2} &\rho_{n-1}\\
    0 &-[2]_q& \rho_{1} &\cdots&\rho_{n-3}&\rho_{n-2}\\
    0 & 0 & -[3]_q &\cdots&\rho_{n-4}&\rho_{n-3}\\
    \vdots&\vdots &\vdots& \ddots & \vdots&\vdots\\
    0& 0 &0& \cdots & -[n-1]_q &\rho_{1}
        \end{array}
    \right)
    \]
    which by Proposition \ref{prop:rhoeq} means that $X_\rho(K_n)=n!_qh_n=\omega(\csf_q(K_n))$.
    \end{proof}

\begin{Rem}
In view of Theorem \ref{thm:main1} and Proposition \ref{prop:rhoeq} item (5), we can express the $e$-coefficients $\csf_{\lambda}$ of $\csf_q$ as a $Q(t)$-linear combination of $(c_\mu)_{\mu\vdash n}$. We have
\[
\csf_\lambda=\sum_{\mu\vdash n} (-1)^{\ell(\lambda)-\ell(\mu)}w_{\mu\lambda}c_{\mu},
\]
where $w_{\mu\lambda}$ is defined in Section \ref{sec:sym}. For example,
\begin{enumerate}
    \item if $\lambda=(n)$, then  $\csf_{n}=[n]_qc_\lambda$;
    \item if $\lambda=(a,b)$, with $b<a$, then $\csf_\lambda=[a]_q[b]_qc_\lambda-([a]_q+[b_q])c_{a+b}$;
    \item if $\lambda=(a,a)$, then $\csf_{\lambda}=[a]_q^2c_{\lambda}-[a]_qc_{n}$.
    \item if $\lambda$ has distinct parts $\lambda_1>\lambda_2>\ldots>\lambda_{\ell(\lambda})$ then
    \[
    w_{\mu\lambda}=\sum_{A}\prod_{B\in A}\sum_{j\in B}[\lambda_j]_q
    \]
    where the sum runs through all ordered set partitions $A=(B_1,\ldots, B_{\ell(\mu)})$ of $\{1,\ldots,\ell(\lambda)\}$ such that $\sum_{j\in B_k}\lambda_j=\mu_k$ for every $k=1,\ldots, \ell(\mu)$.
\end{enumerate}
\end{Rem}

We now prove Theorem \ref{thm:main2}.
  \begin{proof}[Proof of Theorem \ref{thm:main2}]
  If we take $y_n=(q-1)^{n-1}e_n$, then the right-hand side of Equation \eqref{eq:thm2} is precisely $X_y(G)$. By Theorem \ref{thm:LLT},  Proposition \ref{prop:multiplicative}, Proposition \ref{prop:forestmod}, and Proposition \ref{prop:forestcomp}, we have that both $\LLT$ and $X_y$ are multiplicative, satisfy the modular law, and have the same recurrence for complete graphs. This means that $\LLT=X_y$ and the result follows.
  \end{proof}

We now relate increasing spanning forests of an indifference graph $G$ to orientations of $G$, following \cite{Alexandersson_2020} and \cite{AS2020}.  We say that an oriented edge $\overrightarrow{uv}$ of $G$ is oriented to the right if $u<v$, and oriented to the left otherwise. Given an orientation $o$ of $G$, we remove all edges that are oriented to the right and consider the function $\lrv_o\col [n]\to [n]$ (called the \emph{lowest reaching vertex}), where $\lrv(v)$ is the minimum $u$ that is reachable from $v$. Consider the partition $\lambda(o)$ of $o$ as the partition induced by $|\lrv^{-1}(i)|$ for $i\in [n]$. Moreover, to each orientation $o$, we define the weight $\wt(o)$ as the number of edges oriented to the left. We will now give another proof of \cite[Corollary 2.10]{AS2020} for indifference graphs.

  \begin{Cor}
  \label{cor:peralex}
  If $G$ is an indifference graph, then $\LLT(G,q+1)=\sum_{o}q^{\wt(o)}e_{\lambda(o)}$, where the sum runs through all orientations of $G$.
  \end{Cor}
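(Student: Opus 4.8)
The plan is to read off the $e$-expansion of $\LLT(G)$ from the increasing-forest form of Theorem~\ref{thm:main2} and match it, term by term, with the orientation sum through an explicit bijection. By Theorem~\ref{thm:main2} (and Definition~\ref{def:increasingfor} with $y_n=(q-1)^{n-1}e_n$, together with $n-\ell(\lambda(F))=|E(F)|$) we have $\LLT(G)=\sum_{F\in\F(G)}q^{\wt_G(F)}(q-1)^{|E(F)|}e_{\lambda(F)}$, so substituting $q\mapsto q+1$ gives $\LLT(G;q+1)=\sum_{F\in\F(G)}q^{|E(F)|}(q+1)^{\wt_G(F)}e_{\lambda(F)}$. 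Hence it suffices to attach to each $F\in\F(G)$ a set $W_F$ with $|W_F|=\wt_G(F)$ and a bijection between orientations $o$ of $G$ whose "underlying forest" equals $F$ and subsets $S\subseteq W_F$, in such a way that $\lambda(o)=\lambda(F)$ and $\wt(o)=|E(F)|+|S|$; expanding $(q+1)^{\wt_G(F)}=\sum_{S\subseteq W_F}q^{|S|}$ then finishes the proof.

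I would first define the forest $F(o)$ attached to an orientation $o$: keep the left-oriented edges and, for each non-minimal vertex $v$ that has a left-neighbour, let $g(v)$ be the smallest left-neighbour $u$ of $v$ with $\lrv_o(u)=\lrv_o(v)$ (and declare $v$ a root otherwise); let $F(o)$ be the increasing spanning forest with parent function $g$. A short induction on the vertex value shows that the $g$-path out of $v$ keeps $\lrv_o$ constant and terminates at $\lrv_o(v)$, so the trees of $F(o)$ are exactly the fibres of $\lrv_o$ and $\lambda(F(o))=\lambda(o)$. For a fixed $F\in\F(G)$ with trees $T_1,\dots,T_k$ (roots increasing) and parent function $g$, I would set $W_F$ to be the disjoint union of $\{(v,u):v\text{ non-root},\ u\in V(T_{j(v)}),\ g(v)<u<v\}$ and $\{(w,v):v<w,\ \{v,w\}\text{ a }G\text{-inversion of }F\}$, where $T_{j(v)}$ is the tree of $v$. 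Grouping the first block by $v$ counts $\sum_j\wt(T_j)$ (the length of the edge $\{g(v),v\}$ inside its tree), and the second block has size $\inv_G(F)$, so $|W_F|=\wt_G(F)$; here the indifference hypothesis is used, since $\{g(v),v\}\in E(G)$ and $g(v)<u<v$ force $\{u,v\}\in E(G)$, so every pair in the first block really is an edge of $G$.

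Next I would build the bijection. For $o\mapsto(F(o),S(o))$, let $S(o)$ be the set of left edges of $o$ that are not edges of $F(o)$, recording an arc $v\to u$ as $(v,u)$; using the paragraph above one checks that such an edge is either "within a tree" (with $u$ in the tree of $v$ and $g(v)<u<v$) or "between trees", in which case $\{u,v\}$ is a $G$-inversion of $F(o)$, so $S(o)\subseteq W_{F(o)}$ and $\wt(o)=|E(F(o))|+|S(o)|$. The inverse map $(F,S)\mapsto o$ orients every edge of $F$ and every slot in $S$ toward its smaller vertex and every remaining edge of $G$ toward its larger vertex; here one verifies that each edge of $G$ lies in exactly one of the classes "$F$-edge / first-block slot / second-block slot / forced upward" (a casework on whether the endpoints lie in the same tree and on the order of the roots), and then that the orientation $o$ so produced satisfies $\lrv_o(v)=\roott(T_{j(v)})$ and $g_o(v)=g(v)$ no matter which slots lie in $S$, so $F(o)=F$ and $S(o)=S$. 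Assembling, $\sum_o q^{\wt(o)}e_{\lambda(o)}=\sum_{F}q^{|E(F)|}e_{\lambda(F)}\sum_{S\subseteq W_F}q^{|S|}=\sum_F q^{|E(F)|}(1+q)^{\wt_G(F)}e_{\lambda(F)}=\LLT(G;q+1)$.

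The delicate point, and the step I expect to be the main obstacle, is precisely the last verification: that the parent function and the $\lrv$-fibres recovered from $(F,S)\mapsto o$ are insensitive to the choice of $S$, and that any edge of $G$ not recorded by $W_F$ is genuinely forced "upward" by the constraint $F(o)=F$. Both follow from the indifference structure (betweenness implies adjacency) together with the monotonicity $\lrv_o(x)\ge\lrv_o(v)$ along a left arc $v\to x$; everything else is bookkeeping. (As an alternative one could sidestep the bijection and instead check directly that $\sum_o q^{\wt(o)}e_{\lambda(o)}$ is multiplicative and satisfies the $q\mapsto q+1$ shift of the modular law, with the correct value at complete graphs; but the bijective argument is more transparent and reproduces the combinatorics of \cite{AS2020}.)
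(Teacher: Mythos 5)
Your proposal is correct and follows essentially the same route as the paper: reduce to Theorem \ref{thm:main2} with $q\mapsto q+1$, map each orientation to an increasing forest via its left arcs toward the lowest reaching vertex (your ``smallest left-neighbour with equal $\lrv$'' parent is exactly the paper's ``edge of greatest length''), and classify the remaining edges as forced left, forced right, or free. Your bijection of the fibre over $F$ with subsets $S\subseteq W_F$ is just an explicit repackaging of the paper's count $\sum_{o\in g^{-1}(F)}q^{\wt(o)}=q^{n-\ell(\lambda(F))}(q+1)^{\wt_{\m}(F)}$, with a somewhat more careful verification of the inverse map than the paper spells out.
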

  \begin{proof}
  We construct a function $g\col \mathcal{O}(G_{\m})\to \F(G_{\m})$ in the following way. Let $o\in \mathcal{O}(G_{\m})$ be an orientation. Remove all edges that are oriented to the right, and all edges $\overrightarrow{vw}$ that are oriented to the left such that $\lrv_o(v)<\lrv_o(w)$ (this means that no path from $v$ to its lowest reaching vertex goes through the edge $\overrightarrow{vw}$). Now, for each vertex $v$, choose (if it exists) the edge with source $v$ and with greatest length. Define the forest $g(o)$ as the forest induced by the chosen edges. Clearly we have that $\lambda(o)=\lambda(g(0))$\par

  \begin{figure}[htb]
          \begin{tikzpicture}
          \begin{scope}[decoration={
    markings,
    mark=at position 0.5 with {\arrow{>}}}
    ]
        \node (1) at (1,0) [label=below:$1$, shape=circle, fill=black, inner sep=2pt] {};
        \node (2) at (2,0) [label=below:$2$, shape=circle, fill=black,inner sep=2pt] {};
        \node (3) at (3,0) [label=below:$3$, shape=circle, fill=black,inner sep=2pt] {};
        \node (4) at (4,0) [label=below:$4$, shape=circle, fill=black,inner sep=2pt] {};
        \node (5) at (5,0) [label=below:$5$, shape=circle, fill=black,inner sep=2pt] {};
        \node (6) at (6,0) [label=below:$6$, shape=circle, fill=black,inner sep=2pt] {};
        \node (7) at (7,0) [label=below:$7$, shape=circle, fill=black,inner sep=2pt] {};
        \draw[postaction=decorate] (5) to [bend right=45] (2);
        \draw[postaction=decorate] (3) to [bend left=45] (5);
        \draw[postaction=decorate] (5) to (4);
        \draw[postaction=decorate] (6) to [bend right=45] (4);
         \draw[postaction=decorate]  (1) to [bend left=45]   (3);
     \draw[postaction=decorate] (1) to [bend left=45] (4);
       \draw[postaction=decorate] (2) to  (3);
       \draw[postaction=decorate] (2) to [bend left=45] (4);
       \draw[postaction=decorate] (5) to [bend left=45] (7);
         \draw[postaction=decorate] (2) to  (1);
        \draw[postaction=decorate] (5) to [bend right=45] (1);
        \draw[postaction=decorate] (4) to (3);
        \draw[postaction=decorate] (6) to (5);
        \draw[postaction=decorate] (7) to (6);
        \end{scope}
        \begin{scope}[shift={(8,0)},decoration={
    markings,
    mark=at position 0.5 with {\arrow{>}}}
    ]
        \node (1) at (1,0) [label=below:$1$, shape=circle, fill=black, inner sep=2pt] {};
        \node (2) at (2,0) [label=below:$2$, shape=circle, fill=black,inner sep=2pt] {};
        \node (3) at (3,0) [label=below:$3$, shape=circle, fill=black,inner sep=2pt] {};
        \node (4) at (4,0) [label=below:$4$, shape=circle, fill=black,inner sep=2pt] {};
        \node (5) at (5,0) [label=below:$5$, shape=circle, fill=black,inner sep=2pt] {};
        \node (6) at (6,0) [label=below:$6$, shape=circle, fill=black,inner sep=2pt] {};
        \node (7) at (7,0) [label=below:$7$, shape=circle, fill=black,inner sep=2pt] {};
        \draw[postaction=decorate] (5) to [bend right=45] (2);

        \draw[postaction=decorate] (5) to (4);
        \draw[postaction=decorate] (6) to [bend right=45] (4);

         \draw[postaction=decorate] (2) to  (1);
        \draw[postaction=decorate] (5) to [bend right=45] (1);
        \draw[postaction=decorate] (4) to (3);
        \draw[postaction=decorate] (6) to (5);
        \draw[postaction=decorate] (7) to (6);

        \end{scope}
        \begin{scope}[shift={(0,-2.5)},decoration={
    markings,
    mark=at position 0.5 with {\arrow{>}}}
    ]
        \node (1) at (1,0) [label=below:$1$, shape=circle, fill=black, inner sep=2pt] {};
        \node (2) at (2,0) [label=below:$2$, shape=circle, fill=black,inner sep=2pt] {};
        \node (3) at (3,0) [label=below:$3$, shape=circle, fill=black,inner sep=2pt] {};
        \node (4) at (4,0) [label=below:$4$, shape=circle, fill=black,inner sep=2pt] {};
        \node (5) at (5,0) [label=below:$5$, shape=circle, fill=black,inner sep=2pt] {};
        \node (6) at (6,0) [label=below:$6$, shape=circle, fill=black,inner sep=2pt] {};
        \node (7) at (7,0) [label=below:$7$, shape=circle, fill=black,inner sep=2pt] {};
        \draw[postaction=decorate] (5) to [bend right=45] (2);
        \draw[postaction=decorate] (2) to  (1);
        \draw[postaction=decorate] (5) to [bend right=45] (1);
        \draw[postaction=decorate] (4) to (3);
        \draw[postaction=decorate] (6) to (5);
        \draw[postaction=decorate] (7) to (6);
        \end{scope}
        \begin{scope}[shift={(8,-2.5)},decoration={
    markings,
    mark=at position 0.5 with {\arrow{>}}}
    ]
        \node (1) at (1,0) [label=below:$1$, shape=circle, fill=black, inner sep=2pt] {};
        \node (2) at (2,0) [label=below:$2$, shape=circle, fill=black,inner sep=2pt] {};
        \node (3) at (3,0) [label=below:$3$, shape=circle, fill=black,inner sep=2pt] {};
        \node (4) at (4,0) [label=below:$4$, shape=circle, fill=black,inner sep=2pt] {};
        \node (5) at (5,0) [label=below:$5$, shape=circle, fill=black,inner sep=2pt] {};
        \node (6) at (6,0) [label=below:$6$, shape=circle, fill=black,inner sep=2pt] {};
        \node (7) at (7,0) [label=below:$7$, shape=circle, fill=black,inner sep=2pt] {};
        \draw[postaction=decorate] (2) to  (1);
        \draw[postaction=decorate] (5) to [bend right=45] (1);
        \draw[postaction=decorate] (4) to (3);
        \draw[postaction=decorate] (6) to (5);
        \draw[postaction=decorate] (7) to (6);
        \end{scope}
        \end{tikzpicture}
      \caption{From an orientation to an increasing tree. On the top right we remove all edges oriented to the right. On the bottom left, we remove all edges oriented to the left that do not reach the lowest reaching vertex. On the bottom right, we choose the edges with greatest length.}
      \label{fig:orentationtree}
  \end{figure}
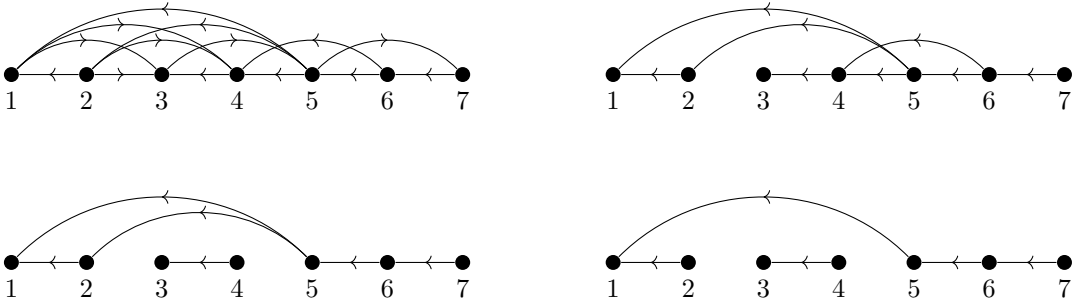

    We claim that for $F\in \F(G_{\m})$ the following equality holds (see Figure \eqref{fig:orientationforest})
  \[
  \sum_{o\in g^{-1}(F)} q^{\wt(o)}=q^{(n-\ell(\lambda(F)))}(q+1)^{\wt_{\m}(F)}.
  \]
  Indeed, for $o \in g^{-1}(F)$ each edge in $F$ must be oriented to the left (in Figure \eqref{fig:orientationforest} these are the edges in red), which contributes with $q^{n-\ell(\lambda(F))}$ to the weight of $o$. Each edge that connects two components of $F$ and that is not an inversion must be oriented to the right (in Figure \eqref{fig:orientationforest} these are the edges  $\{1,3\}$, $\{1,4\}$, $\{2,3\}$, and $\{2,4\}$). These edges do not contribute to the weight. Each edge that connects two components of $F$ and that is an inversion can be oriented either way (in Figure \eqref{fig:orientationforest}, these are the edges  $\{3,5\}$, $\{4,5\}$ and $\{4,6\}$). These edges contribute with $(q+1)^{\inv(F)}$ to the sum of the weights. The edges $\{u,v\}$, with $u<v$, that are not in $F$ but connect the same component $T$ in $F$, either have to be oriented right, if $\{u',v\}\in T$ for some $u<u'<v$ (in Figure \eqref{fig:orientationforest} this is the edge $\{5,7\}$), otherwise it can be oriented either way (in Figure \eqref{fig:orientationforest} this is the edge $\{2,5\}$ ). These edges contribute with $(q+1)^{\wt(T)}$ to the sum of the weights. \par
   By Theorem \ref{thm:main2} we have that
   \[
   \LLT(G_{\m};q+1)=\sum_{F\in F(G_{\m})}q^{n-\ell(\lambda(F))}(q+1)^{\wt_{\m}(F)}e_{\lambda(F)},
   \]
   from which the result follows.
    \end{proof}
  \begin{figure}[htb]
      \begin{tikzpicture}
         \begin{scope}
        \node (1) at (1,0) [label=below:$1$, shape=circle, fill=black, inner sep=2pt] {};
        \node (2) at (2,0) [label=below:$2$, shape=circle, fill=black,inner sep=2pt] {};
        \node (3) at (3,0) [label=below:$3$, shape=circle, fill=black,inner sep=2pt] {};
        \node (4) at (4,0) [label=below:$4$, shape=circle, fill=black,inner sep=2pt] {};
        \node (5) at (5,0) [label=below:$5$, shape=circle, fill=black,inner sep=2pt] {};
        \node (6) at (6,0) [label=below:$6$, shape=circle, fill=black,inner sep=2pt] {};
        \node (7) at (7,0) [label=below:$7$, shape=circle, fill=black,inner sep=2pt] {};
        \draw (1) to  (2);
        \draw (1) to [bend left=45] (3);
        \draw (1) to [bend left=45] (4);
        \draw (1) to [bend left=45] (5);
        \draw (2) to  (3);
        \draw (2) to [bend left=45] (4);
        \draw (2) to [bend left=45] (5);
        \draw (3) to (4);
        \draw (3) to [bend left=45] (5);
        \draw (4) to (5);
        \draw (4) to [bend left=45] (6);
        \draw (5) to (6);
        \draw (5) to [bend left=45] (7);
        \draw (6) to (7);
        \end{scope}
        \begin{scope}[shift={(8,0)}]
        \node (1) at (1,0) [label=below:$1$, shape=circle, fill=black, inner sep=2pt] {};
        \node (2) at (2,0) [label=below:$2$, shape=circle, fill=black,inner sep=2pt] {};
        \node (3) at (3,0) [label=below:$3$, shape=circle, fill=black,inner sep=2pt] {};
        \node (4) at (4,0) [label=below:$4$, shape=circle, fill=black,inner sep=2pt] {};
        \node (5) at (5,0) [label=below:$5$, shape=circle, fill=black,inner sep=2pt] {};
        \node (6) at (6,0) [label=below:$6$, shape=circle, fill=black,inner sep=2pt] {};
        \node (7) at (7,0) [label=below:$7$, shape=circle, fill=black,inner sep=2pt] {};
        \draw[color=red] (1) to  (2);
        \draw[color=red] (1) to [bend left=45] (5);
        \draw[color=red] (3) to (4);
        \draw[color=red] (5) to (6);
        \draw[color=red] (6) to (7);
        \end{scope}
        \begin{scope}[scale=1.5, shift={(1.333,-2)},decoration={
    markings,
    mark=at position 0.4 with {\arrow{<}}, mark=at position 0.6 with {\arrow{>}}}
    ]
        \node (1) at (1,0) [label=below:$1$, shape=circle, fill=black, inner sep=2pt] {};
        \node (2) at (2,0) [label=below:$2$, shape=circle, fill=black,inner sep=2pt] {};
        \node (3) at (3,0) [label=below:$3$, shape=circle, fill=black,inner sep=2pt] {};
        \node (4) at (4,0) [label=below:$4$, shape=circle, fill=black,inner sep=2pt] {};
        \node (5) at (5,0) [label=below:$5$, shape=circle, fill=black,inner sep=2pt] {};
        \node (6) at (6,0) [label=below:$6$, shape=circle, fill=black,inner sep=2pt] {};
        \node (7) at (7,0) [label=below:$7$, shape=circle, fill=black,inner sep=2pt] {};
        \draw[postaction=decorate] (2) to [bend left=45] (5);
        \draw[postaction=decorate] (3) to [bend left=45] (5);
        \draw[postaction=decorate] (4) to (5);
        \draw[postaction=decorate] (4) to [bend left=45] (6);

        \end{scope}
        \begin{scope}[scale=1.5, shift={(3,-3)},decoration={
    markings, mark=at position 0.55 with {\arrow{>}}},color=blue
    ]
     \draw[postaction=decorate]  (1) to [bend left=45]   (3);
     \draw[postaction=decorate] (1) to [bend left=45] (4);
       \draw[postaction=decorate] (2) to  (3);
       \draw[postaction=decorate] (2) to [bend left=45] (4);
       \draw[postaction=decorate] (5) to [bend left=45] (7);
    \end{scope}
    \begin{scope}[scale=1.5, shift={(3,-3)},decoration={
    markings, mark=at position 0.55 with {\arrow{<}}}, color=red
    ]
        \draw[postaction=decorate] (1) to  (2);
        \draw[postaction=decorate] (1) to [bend left=45] (5);
        \draw[postaction=decorate] (3) to (4);
        \draw[postaction=decorate] (5) to (6);
        \draw[postaction=decorate] (6) to (7);
    \end{scope}
    \end{tikzpicture}
       \caption{An indifference graph $G$, an increasing forest $F$ of $G$ and the orientations $o\in g^{-1}(G)$}
      \label{fig:orientationforest}
  \end{figure}
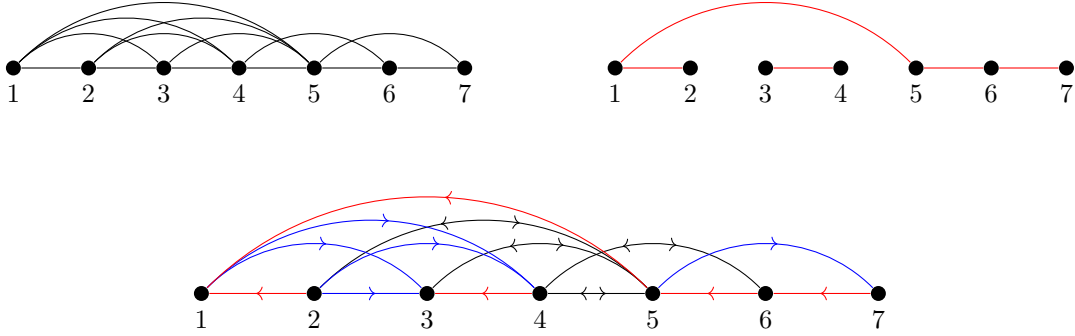

 We finish this section with a few comments about the vertical strip $\LLT$ polynomials. We refer the reader to \cite{LLT} for the original definition  and to \cite{AlexPanova} for the definition given here.\par
 Given a Hessenberg function $\m\col[n]\to[n]$ we say that a set $S\subset\{n-1\}$ is a decoration on $\m$ if $\m(i)>\max\{\m(i-1),i\}$ (recall that we set $\m(0)=0$) for every $i\in S$. We define $\LLT(\m,S)$ and $X_y(\m,S)$ by the following recursive formulas. If $S=\emptyset$, then $\LLT(\m,S)=\LLT(\m)$ and $X_y(\m,S)=X_y(\m)$, otherwise, for every $i\in S$ we have
  \begin{align*}
  \LLT(\m,S)&:=\frac{\LLT(\m,S')-\LLT(\m',S')}{q-1}\\
  X_y(\m,S)&:=X_y(\m,S')-X_y(\m',S'),
  \end{align*}
  where $S'=S\setminus\{i\}$ and
  \[
  \m'(j)=\begin{cases}
     \m(j)&\text{ if }j\neq i\\
     \m(i)-1&\text{ if }j=i
     \end{cases}
  \]
  In particular, if $y_n=(q-1)^{n-1}e_n$ then, by Theorem \ref{thm:main2}, we have that $\LLT(\m,S)=X_y(\m,S)/(q-1)^{|S|}$. As in Definition \ref{def:increasingfor} we write $X_y(\m,S)=\sum_{\lambda\vdash n}c_{\lambda}(\m,S)y_{\lambda}$.
  \begin{Prop}
  \label{prop:vertical}
  We have that $c_{\lambda}(\m,S,q+1)\in \mathbb{N}[q]$. In particular $\LLT(\m,S;q+1)$ is $e$-positive.
  \end{Prop}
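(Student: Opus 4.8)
The plan is to establish, by induction on $|S|$, an explicit combinatorial formula for $X_y(\m,S)$ in the case $y_n=(q-1)^{n-1}e_n$ that is relevant here, and then to read off both assertions from it. Call an increasing spanning forest $F\in\F(G_\m)$ \emph{$S$-admissible} if for every $i\in S$ the pair $\{i,\m(i)\}$ is either an edge of $F$ or an inversion of $F$, and set $d_S(F):=|\{i\in S:\{i,\m(i)\}\notin E(F)\}|$. The formula to be proved is
\[
X_y(\m,S)=\sum_{F}q^{\wt_\m(F)-d_S(F)}(q-1)^{n-\ell(\lambda(F))+d_S(F)}e_{\lambda(F)},
\]
the sum running over all $S$-admissible $F\in\F(G_\m)$. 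For $S=\emptyset$ every forest is admissible and $d_\emptyset\equiv0$, so this is exactly Definition \ref{def:increasingfor} once one writes $y_{\lambda(F)}=(q-1)^{n-\ell(\lambda(F))}e_{\lambda(F)}$.

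For the inductive step I would fix $i\in S$, put $S'=S\setminus\{i\}$, and let $\m'$ agree with $\m$ except that $\m'(i)=\m(i)-1$; it is routine that $(\m,S')$ and $(\m',S')$ are again decorated Hessenberg functions and that $G_{\m'}=G_\m\setminus e$ for $e=\{i,\m(i)\}$. By the recursion defining $X_y(\m,S)$ it then suffices to subtract the two inductive expansions, and the two facts that make this go through are: (i) $\F(G_\m)$ is the disjoint union of $\F(G_{\m'})$ and the set of forests of $G_\m$ containing $e$; and (ii) for $F\in\F(G_{\m'})$ one has $\wt_\m(F)=\wt_{\m'}(F)+[\,e\text{ is an inversion of }F\,]$, since $e\in E(G_\m)$ while the tree weights are intrinsic to $F$. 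One also notes that for $j\in S'$ the condition ``$\{j,\m(j)\}$ is an edge or an inversion of $F$'' is the same whether $F$ is regarded as a forest of $G_\m$ or of $G_{\m'}$. Feeding (i) and (ii) into $X_y(\m,S')-X_y(\m',S')$: an $S'$-admissible forest of $G_{\m'}$ for which $e$ is \emph{not} an inversion cancels exactly; an $S'$-admissible forest $F$ of $G_{\m'}$ for which $e$ \emph{is} an inversion contributes $\bigl(q^{\wt_\m(F)-d_{S'}(F)}-q^{\wt_{\m'}(F)-d_{S'}(F)}\bigr)(q-1)^{n-\ell(\lambda(F))+d_{S'}(F)}e_{\lambda(F)}=q^{\wt_\m(F)-1-d_{S'}(F)}(q-1)^{n-\ell(\lambda(F))+d_{S'}(F)+1}e_{\lambda(F)}$, which is precisely the term for $F$ in the displayed formula since here $d_S(F)=d_{S'}(F)+1$; and an $S'$-admissible forest of $G_\m$ containing $e$ survives unchanged, now being $S$-admissible with $d_S(F)=d_{S'}(F)$. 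Matching the surviving families against the three kinds of $S$-admissible forests of $G_\m$ (split according to whether $e$ is an edge, an inversion, or neither of $F$, the last being inadmissible and contributing nothing) reproduces the displayed formula.

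Granting the formula, both claims are immediate. For an $S$-admissible $F$, the $d_S(F)$ pairs $\{i,\m(i)\}$ with $i\in S$ missing from $E(F)$ are, by admissibility, $d_S(F)$ distinct $G_\m$-inversions of $F$, whence $\wt_\m(F)\ge\inv_\m(F)\ge d_S(F)$, so every $q$-exponent above is $\ge0$; and the $|S|-d_S(F)$ pairs $\{i,\m(i)\}$ that lie in $E(F)$ are $|S|-d_S(F)$ distinct edges of $F$, whence $n-\ell(\lambda(F))\ge|S|-d_S(F)$, so every $(q-1)$-exponent is $\ge|S|$. Extracting the coefficient of $y_\lambda=(q-1)^{n-\ell(\lambda)}e_\lambda$ gives $c_\lambda(\m,S)=\sum_F q^{\wt_\m(F)-d_S(F)}(q-1)^{d_S(F)}$, summed over $S$-admissible $F$ with $\lambda(F)=\lambda$, hence $c_\lambda(\m,S;q+1)=\sum_F(q+1)^{\wt_\m(F)-d_S(F)}q^{d_S(F)}\in\mathbb{N}[q]$. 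Finally, $\LLT(\m,S)=X_y(\m,S)/(q-1)^{|S|}$, and since every $(q-1)$-exponent in the formula is at least $|S|$, this is a genuine element of $\Lambda_q$ with $\LLT(\m,S;q+1)=\sum_F(q+1)^{\wt_\m(F)-d_S(F)}q^{n-\ell(\lambda(F))+d_S(F)-|S|}e_{\lambda(F)}$, which is $e$-positive with coefficients in $\mathbb{N}[q]$.

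The only laborious point is the inductive step: one must verify that $X_y(\m,S')-X_y(\m',S')$ reorganizes \emph{precisely} into the sum over $S$-admissible forests of $G_\m$ with $d_S$ as stated. I do not expect a conceptual obstacle there — it is purely a matter of tracking how admissibility and the statistic $d_S$ transform under deleting the single edge $e$ — but it must be done carefully, in particular checking that the forests which pick up the extra factor $(q-1)$ are exactly those for which $e$ becomes an inversion, and that no $S$-admissible forest of $G_\m$ is missed or counted twice.
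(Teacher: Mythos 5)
Your proof is correct and is essentially the paper's argument: the paper unfolds the recursion into the inclusion--exclusion identity $c_{\lambda}(\m,S)=\sum_{S'\subset S}(-1)^{|S'|}c_{\lambda}(\m_{S'})$ and evaluates the alternating sum forest by forest, which is exactly your induction in closed form, with the same trichotomy (each $\{i,\m(i)\}$ an edge of $F$, an inversion of $F$, or neither) producing the same conclusion that only ``admissible'' forests survive and each missing pair contributes a factor $(q-1)$. Your explicit tracking of the exponent bound $n-\ell(\lambda(F))+d_S(F)\geq |S|$ just makes precise the divisibility by $(q-1)^{|S|}$ that the paper leaves implicit in deducing $e$-positivity of $\LLT(\m,S;q+1)$.
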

  \begin{proof}
  For each subset $S'\subset S$ define $\m_{S'}$ as
  \[
  \m_{S'}(j)=\begin{cases}
  \m(j)&\text{ if }j\notin S'\\
  \m(j)-1&\text{ if }j\in S'.
  \end{cases}
  \]
  Then
  \[
  c_{\lambda}(\m,S)=\sum_{S'\subset S}(-1)^{|S'|}c_{\lambda}(\m_{S'})=\sum_{F\in F(G_\m)}\sum_{S'\subset S}(-1)^{|S'|}q^{\wt_{\m_{S'}}(F)}
  \]
  where we assume that $q^{\wt_{\m_{S'}}(F)}=0$ if $F\notin F(G_{\m_{S'}})$. However, for an increasing forest $F$, if there exists $i\in S$ such that the edge $(i,\m(i))$ is neither an edge of $F$ nor an inversion of $F$, then $q^{\wt_{\m_{S'}}}(F)=q^{\wt_{\m_{S'\cup\{i\}}}}(F)$ for every $S'\subset S\setminus\{i\}$. In particular, the sum $\sum_{S'\subset S}(-1)^{|S'|}q^{\wt_{\m_{S'}}(F)}$ is $0$. On the other hand, if the edge $(i,\m(i))$ is either an edge of $F$ or an inversion of $F$ for every $i\in S$, then
  \[
  \sum_{S'\subset S}(-1)^{|S'|}q^{\wt_{\m_{S'}}}(F)=(q-1)^k,
  \]
  where $k=|\{i\in S;(i,\m(i)) \text{ is a inversion of }F\}|$. This finishes the proof.
  \end{proof}

  \begin{Question}
    Does $c_{\lambda}(\m,S)\in \mathbb{N}[q]$?
    \end{Question}
    Computer experiments suggest that
    \[
    X(\m,S)=\sum_{F\in \F(G_{\m},S)}q^{\wt_{\m,S}(F)}e_{\lambda(F)},
    \]
    where $F(G_{\m},S)$ is the set of increasing spanning forests of $G_{\m}$ containing the edges $\{\{i,\m(i)\},i\in S\}$. However, we were not able to find the appropriate weight $\wt_{G_{\m,S}}(F)$.

\bibliographystyle{amsalpha}
\bibliography{bibhess}

\end{document}